\newtheorem{Theorem}{Theorem}[section]
\newtheorem{Lemma}[Theorem]{Lemma}
\newtheorem{Proposition}[Theorem]{Proposition}
\newtheorem{Corollary}[Theorem]{Corollary}
\theoremstyle{definition}
\newtheorem{definition}[Theorem]{Definition}
\newtheorem{example}[Theorem]{Example}
\numberwithin{equation}{section}
\DeclareMathOperator{\trdeg}{\mathrm{tr\text{.}deg}}
\title[MZV's with different constants]{Multiple zeta values with varying constant fields}
\author{Daichi Matsuzuki}
\email{m19044h@math.nagoya-u.ac.jp}
\address{Graduate School of Mathematics, Nagoya University, 
Furo-cho, Chikusa-ku, Nagoya, 464-8602, Japan}
\date{\today}
\begin{document}
\maketitle
\begin{abstract}
    Multiple zeta values associated with function fields with varying constant fields are dealt with simultaneously. Thakur introduced multiple zeta values in the arithmetic of positive characteristic function fields, and the definition depends on the field of constants of the chosen function field. Using Papanikolas' theory on the relationship between the $t$-motivic Galois group and the periods of a pre-$t$-motive, we show that there exist no algebraic relations which relate multiple zeta values with different constants field.
\end{abstract}
\tableofcontents
\section{Introduction}
In the arithmetic of function fields of positive characteristic, we study the polynomial ring $A=A_l:=\mathbb{F}_{p^l}[\theta]$ as an analogue of the ring $\mathbb{Z}$ of rational integers, where $p$ is a fixed prime number and $l$ is a positive integer. The rational function field $K:=\mathbb{F}_{p^l}(\theta)$ is considered to be an analogue of the field $\mathbb{Q}$ of rational numbers and the field $K_\infty:=\mathbb{F}_{p^l}((1/\theta))$ is regarded as an analogue of the field $\mathbb{R}$ of real numbers.
Our main objects are positive characteristic analogues of certain real numbers called multiple zeta values (MZV's for short):
\begin{equation}
    \zeta(s_1,\,\dots,\,s_d):=\sum_{m_1>\cdots>m_d>0}\frac{1}{m_1^{s_1}\cdots m_r^{s_r}} \in \mathbb{R}, \quad(s_1 >1,\,s_2,\,\dots,\,s_d >0).
\end{equation}
Positive characteristic analogues of real MZV's are introduced by Carlitz~(\cite{Carlitz1935}) for the case $d=1$ and by Thakur~(\cite{ThakurBook}) in general as follows:
\begin{equation}
    \zeta_l(s_1,\,\dots,\,s_d):=\sum_{ \substack{a_1,\,\dots,\,a_d \in A_{l}\\a_1,\,\dots,\,a_d\text{: monic} \\\deg a_1>\cdots>\ \deg a_d \geq 0}}\frac{1}{a_1^{s_1}\cdots a_r^{s_r}} \in \mathbb{F}_{p^l}((1/\theta)), \quad(s_1,\,\dots,\,s_r >0).
\end{equation}
(Though the notation $\zeta_A(s_1,\,\dots,\,s_d)$ or $\zeta_C(s_1,\,\dots,\,s_d)$ is usually used, we adopt the notation above in order to emphasize the dependence on $l$.)
A tuple $\mathbf{s}=(s_1,\,\dots,\,s_d)$ is called an \textit{index}. The number $\operatorname{dep}(\mathbf{s}):=d$ and the sum $\operatorname{wt}(\mathbf{s}):=s_1+\cdots+s_d$ are respectively called \textit{depth} and \textit{weight} of the presentation $\zeta_l(\mathbf{s})$.

For each fixed $l$, linear and algebraic independence of multiple zeta values $\zeta_{l}(s_1$, $\dots,\,s_r)$ are well studied by many researchers. For example, Chang~\cite{Chang2014} proved that there exists no $\overline{K}$-linear relation which relates MZV's of different weights. That is, the $\overline{K}$-linear subspace $\mathcal{Z}$ of the completion $\mathbb{C}_\infty$ of the algebraic closure $\overline{K}_\infty$ spanned by all MZV's is the direct product $\bigoplus_{j\geq 0} \mathcal{Z}_j$ of linear subspaces $\mathcal{Z}_j$ spanned by MZV's of fixed weights $j$. 
Todd~(\cite{Todd2018}) proposed a conjecture which predicts the dimension over $\overline{K}$ of each $\mathcal{Z}_j$ as an analogue of Zagier's conjecture for MZV's in characteristic $0$, and Thakur~(\cite{Thakur2017a}) gives a conjectural basis of $\mathcal{Z}_j$.
These conjectures of Todd and Thakur are solved independently in \cite{Chang2023} and \cite{Im2022}. 

Regarding algebraic independence, Chang and Yu~\cite{Chang2007} determined all algebraic relations among single zeta values (MZV's of depth $1$, also known as Carlitz zeta values). There are also Mishiba's works \cite{Mishiba2015a}, \cite{Mishiba2015}, and \cite{Mishiba2017} on algebraic independence of certain families of MZV's containing higher depth ones. We note again here that all these results concern MZV's with fixed $l$, the degree of constant field $\mathbb{F}_{p^l}$ over $\mathbb{F}_p$.

In the present paper, we consider MZV's $\zeta_{l}(s_1,\,\dots,\,s_r)$ with different $l$'s simultaneously, as they belong to the same field $\overline{\mathbb{F}_{p}}((1/\theta))$, it makes sense to consider algebraic relations of MZV's with different $l$'s. In the depth $1$ case, we already have definitive results of Chang, Papanikolas, and Yu~\cite{Chang2011}, which says that there are no algebraic relations among single zeta values of different $l$'s. Our aim is to generalize their work to higher depth setting.
The main result of this paper is the following:
\begin{Theorem}\label{main}
    Let $I$ be a finite set of indices and assume that $(s_1,\,\dots,\,s_d) \in I$ implies $(s_i,\,s_{i+1},\,\dots,\,s_j) \in I$ for any $1 \leq i \leq j \leq d$. For any distinct integers $l_1,\,\dots,\,l_r$, it holds that
    \begin{equation}\label{EqmainA}
        \operatorname{tr.deg}\overline{K}(\tilde{\pi}_{l_i},\,\zeta_{l_i}(\mathbf{s}) \mid 1 \leq i \leq r,\,\mathbf{s}\in I )=\sum_{i=1}^{r}\operatorname{tr.deg}\overline{K}(\tilde{\pi}_{l_i},\,\zeta_{l_i}(\mathbf{s}) \mid \mathbf{s}\in I ).
    \end{equation}
\end{Theorem}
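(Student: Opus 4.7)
The plan is to apply Papanikolas' Tannakian theory of pre-$t$-motives to translate \eqref{EqmainA} into a statement about dimensions of $t$-motivic Galois groups, and then to show that the Galois group of the ``combined'' pre-$t$-motive packaging all the $l_i$ together is the full direct product of the individual Galois groups.

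First, for each $l_i$ I would invoke the Anderson--Thakur style construction (as used in the works cited in the introduction) to produce a pre-$t$-motive $\mathcal{M}_{l_i}$ whose period matrix entries generate the field $\overline{K}(\tilde{\pi}_{l_i},\,\zeta_{l_i}(\mathbf{s})\mid \mathbf{s}\in I)$. The hypothesis that $I$ is closed under taking contiguous subindices is exactly what allows the pre-$t$-motives attached to the individual $\zeta_{l_i}(\mathbf{s})$ to assemble compatibly into the single object $\mathcal{M}_{l_i}$. By Papanikolas' theorem, $\dim \Gamma_{\mathcal{M}_{l_i}}$ equals the $i$-th summand on the right of \eqref{EqmainA}. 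Since the natural Frobenius twist attached to constants in $\mathbb{F}_{p^{l_i}}$ is $\sigma^{l_i}$, where $\sigma$ denotes the $p$-power twist, I next pass through the standard restriction (``unfolding'') functor to place all $\mathcal{M}_{l_i}$ as objects of the Tannakian category of $\sigma$-pre-$t$-motives.

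In this common category I form $\mathcal{M}:=\bigoplus_{i=1}^{r}\mathcal{M}_{l_i}$. Its Galois group $\Gamma_{\mathcal{M}}$ embeds naturally into $\prod_{i=1}^{r}\Gamma_{\mathcal{M}_{l_i}}$ via the projections onto each summand, and each projection is surjective because $\mathcal{M}_{l_i}$ is a subobject. The theorem is equivalent to showing that this embedding is an equality, since the left-hand side of \eqref{EqmainA} equals $\dim\Gamma_{\mathcal{M}}$ while the right-hand side equals $\sum_i\dim\Gamma_{\mathcal{M}_{l_i}}=\dim\prod_i\Gamma_{\mathcal{M}_{l_i}}$. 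I would attack this via a Goursat-style analysis: it suffices to show that for each pair $i\neq j$, the groups $\Gamma_{\mathcal{M}_{l_i}}$ and $\Gamma_{\mathcal{M}_{l_j}}$ admit no common nontrivial quotient inside the image of $\Gamma_{\mathcal{M}}$.

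The torus layer of each $\Gamma_{\mathcal{M}_{l_i}}$ corresponds to the Carlitz period $\tilde{\pi}_{l_i}$, and algebraic independence of $\tilde{\pi}_{l_1},\ldots,\tilde{\pi}_{l_r}$ is precisely the depth-$1$ result of Chang--Papanikolas--Yu~\cite{Chang2011}, so the torus-level projection of $\Gamma_{\mathcal{M}}$ is surjective onto the product of tori. The main obstacle will be to rule out relations at the unipotent level: a nontrivial common quotient of $\Gamma_{\mathcal{M}_{l_i}}$ and $\Gamma_{\mathcal{M}_{l_j}}$ beyond the torus part would correspond, via Tannakian duality, to an isomorphism between sub-quotients of (the restrictions of) $\mathcal{M}_{l_i}$ and $\mathcal{M}_{l_j}$ that is simultaneously compatible with the two distinct Frobenius twists $\sigma^{l_i}$ and $\sigma^{l_j}$. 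To forbid this I would use the Anderson--Brownawell--Papanikolas (ABP) criterion to turn any such hypothetical morphism into an explicit algebraic relation between the defining matrices $\Phi_{l_i}$ and $\Phi_{l_j}$, and then analyze the zero/pole behaviour of the associated rigid analytic functions at the Frobenius orbits of $\theta$ to show that no such relation can hold unless it already lives inside a single $\Gamma_{\mathcal{M}_{l_i}}$; the distinctness of the $l_i$ is used here to separate the orbits $\{\theta^{p^{l_i k}}\}_k$.
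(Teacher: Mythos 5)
Your high-level framework matches the paper's: translate \eqref{EqmainA} into $t$-motivic Galois group dimensions, build for each $l_i$ a pre-$t$-motive whose periods generate $\overline{K}(\tilde{\pi}_{l_i},\zeta_{l_i}(\mathbf{s})\mid\mathbf{s}\in I)$, pass to a common level $R=\operatorname{lcm}(l_1,\dots,l_r)$ via derived pre-$t$-motives, embed $\Gamma_{\mathcal{M}}\hookrightarrow\prod_i\Gamma_{\mathcal{M}_{l_i}}$ with surjective projections, and then establish equality. Your torus-level step also agrees with the paper, which uses Denis' theorem to get $\Gamma_{0,\dots,0}=\mathbb{G}_m^r$ as the base case.

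The genuine gap is in your unipotent-level argument. You propose that a common quotient beyond the torus would give a sub-quotient isomorphism ``simultaneously compatible with the two distinct Frobenius twists $\sigma^{l_i}$ and $\sigma^{l_j}$,'' to be killed by ABP together with zero/pole analysis at the orbits $\{\theta^{p^{l_ik}}\}_k$, ``using the distinctness of the $l_i$ to separate the orbits.'' Two problems. First, once you have restricted both objects to level $R$, they are both $\sigma^R$-modules; there is no remaining tension between ``two Frobenius twists'' for a morphism between them to violate. Second, and more fundamentally, the orbits are not separated: for any distinct $l_i,l_j$ the sets $\{\theta^{p^{l_ik}}\}_{k\in\mathbb{Z}}$ and $\{\theta^{p^{l_jk}}\}_{k\in\mathbb{Z}}$ share infinitely many points (e.g.\ every $\theta^{p^{l_il_jk}}$), so a zero/pole-separation argument has nothing to hook onto. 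Distinctness of the $l_i$ enters the problem only through the algebraic independence of $\tilde{\pi}_{l_1},\dots,\tilde{\pi}_{l_r}$, i.e.\ at the torus level; it supplies no extra leverage at the unipotent level.

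The paper handles the unipotent level by an entirely different and more elementary mechanism, wrapped in an induction on $j_1+\cdots+j_r$ (where $j_i$ counts how many indices of $I$ have been activated for $l_i$). In the crucial inductive step one isolates the kernel $\mathcal{V}\subseteq V_{1,j_1}\times V_{2,j_2}\simeq\mathbb{G}_a^2$ of a natural surjection, proves $\mathcal{V}$ surjects onto each factor $V_{i,j_i}$ (Lemma~\ref{MappedOnto}, using the induction hypothesis), shows that $\mathcal{V}$ is stable under the $\mathbb{G}_m^2$-conjugation action with weights $\operatorname{wt}(\mathbf{s}_{j_1}),\operatorname{wt}(\mathbf{s}_{j_2})$ (Lemma~\ref{Lemmaaction}), and then invokes the structure theory of algebraic subgroups of vector groups in positive characteristic: any proper closed subgroup of $\mathbb{G}_a^2$ is cut out by an additive polynomial, and $\mathbb{G}_m$-stability with nonzero weight forces all its coefficients to vanish (Lemma~\ref{LemmaWisWhole}). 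Hence $\mathcal{V}=V_{1,j_1}\times V_{2,j_2}$ and a dimension count closes the induction. No ABP argument and no Frobenius-orbit analysis occur at this stage; they are not needed once the torus-weight argument is in place. Your Goursat sketch correctly identifies the obstruction to be eliminated, but the tool you propose for eliminating it does not work as stated, whereas the paper's torus-weight argument on vector groups does.
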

\noindent See Example \ref{Omega} for the definition of the Carlitz period $\tilde{\pi}_{l}$. Note that for any finite set $I$ of indices, we can always have a finite set $\hat{I}$ containing $I$ and satisfy the assumption in the abovementioned theorem. Therefore there exists no $\overline{K}$-algebraic relation which relates relates $\zeta_{l}(\mathbf{s})$ with different $l$'s.
We can also consider the same problem for $v$-adic multiple zeta values defined by Chang and Mishiba~\cite{Chang2019a}. We hope to work on this project in the near future.

The present paper is organized as follows. We recall some theories and techniques needed for the proof of Theorem \ref{main} in Section \ref{SPreliminaries}, mainly following literature \cite{Anderson2009}, \cite{Chang2014}, and \cite{Papanikolas2008}. 
We review the notion of pre-$t$-motives, which is introduced by Papanikolas~(\cite{Papanikolas2008}) in Section \ref{SSpret}. 
In Section \ref{subsectionrat}, we recall the notion of rigid analytic trivialities of pre-$t$-motives and that Papanikolas showed that rigid analytically trivial pre-$t$-motives form a neutral Tannakian category and introduced $t$-motivic Galois group. Periods of pre-$t$-motives are also introduced. 
Section \ref{subsectionPapanikolas} is devoted to the theory of Papanikolas, which describes the relationships between $t$-motivic Galois group of given pre-$t$-motives and transcendence property of periods of the pre-$t$-motive. 
We recall the notion of $t$-motivic Carlitz multiple polylogarithms, Anderson-Thakur polynomials, and Anderson-Thakur series, which enable us to interpret MZV's as periods of pre-$t$-motives in Section \ref{subsectionATseries}. We prove Theorem \ref{main} in Section \ref{SectionGaloisGroupCalculation} by constructing suitable pre-$t$-motives which have MZV's as periods and decomposing their $t$-motivic Galois groups into a direct product of $t$-motivic Galois groups of other pre-$t$-motives.

\section{Preliminaries}\label{SPreliminaries}
We recall some theories and techniques required in the proof of Theorem \ref{main} such as pre-$t$-motives (\S\S \ref{SSpret}), rigid analytical trivialities of pre-$t$-motives (\S\S \ref{subsectionrat}), Papanikolas' theory on $t$-motivic Galois groups (\S\S\ref{subsectionPapanikolas}), and period interpretations of MZV's (\S\S \ref{subsectionATseries}). Main references are \cite{Anderson2009}, \cite{Chang2014}, and \cite{Papanikolas2008}.

Let us fix a notation. For each $l \geq1$, we define $A=A_l$ to be the polynomial ring $\mathbb{F}_{p^l}[\theta]$ and $K:=\mathbb{F}_{p^l}(\theta)$ be its fraction field. The symbol $K_\infty$ denotes the field $\mathbb{F}_p((1/\theta))$ of Laurent series, which is the completion of $K$ with respect to the absolute value given by $|\theta|_\infty=p^l$. We note that this absolute value can be extended to the algebraic closure $\overline{K_\infty}$ and we let $\mathbb{C}_\infty$ be the completion of $\overline{K_\infty}$. We take a new variable $t$ and consider the field $\mathbb{C}_\infty((t))$ of Lauren series. We define $\mathbb{T}$ to be the \textit{Tate algebra} given by
\begin{equation}
    \mathbb{T}:=\left\{ \sum_{i=m}^\infty a_i t^i \, \middle|\, m \in \mathbb{Z}, \,a_i \in \mathbb{C}_\infty, \,|a_i|_{\infty}\to 0 \text{ for $i \rightarrow \infty$}\right\}
\end{equation}
and let $\mathbb{L}$ be the field of fractions of $\mathbb{T}$. The fields $\overline{K}$, $\mathbb{C}_\infty$, and $\mathbb{L}$, and the algebra $\mathbb{T}$ are independent on the choice of $l$.
Following \cite{Anderson2004}, for an element 
\begin{equation}
    f=\sum_{i=m}^\infty a_i t^i  
\end{equation}
of $\mathbb{C}_\infty((t))$ and an integer $n$, we define its \textit{$n$-fold twist} to be 
\begin{equation}
    f^{(n)}:=\sum_{i=m}^\infty a_i^{p^n} t^i.
\end{equation}
For any matrix $B=(b_{ij})$ with $b_{ij} \in\mathbb{C}_\infty((t))$ for each $i,\,j$, we define $B^{(n)}:=(b_{ij}^{(n)})$.
A power series 
$
    \sum a_i t^i \in \mathbb{C}_\infty[[t]]
$
is said to be \textit{entire} if we have
$
    \lim_{i \to \infty}\sqrt[i]{|a_i|_\infty}=0$ and $[K_\infty(a_1,\,a_2,\,\dots):K_\infty]<\infty
$ (\cite{Anderson2004}).
An entire power series converges for any $t\in \mathbb{C}_\infty$ and we write $\mathbb{E}$ for the ring of entire power series.

\subsection{Pre-$t$-motives}\label{SSpret}
In this subsection, we recall the notion of pre-$t$-motives, which are introduced by Papanikolas~(\cite{Papanikolas2008}).
 Let $\overline{K}(t)[\sigma,\,\sigma^{-1}]$ be the non-commutative ring of Laurent polynomials over $\overline{K}(t)$ in the variable $\sigma$ subject to the reations $\sigma f = f^{-1} \sigma$ for all $f \in \overline{K}(t) $. For each $l \geq1$, we also consider the sub-ring $\overline{K}(t)[\sigma^l,\,\sigma^{-l}]$, the ring of Laurent polynomials in $\sigma^l$. We note that the center of $\overline{K}(t)[\sigma^l,\,\sigma^{-l}]$ contains $\mathbb{F}_{p^l}(t)$.

\begin{definition}[{\cite[\S 3.2]{Papanikolas2008}}]
A left $\overline{K}(t)[\sigma^l,\,\sigma^{-l}]$-module is called a \textit{pre-$t$-motive of level $l$} if it is a finite dimensional vector space over $\overline{K}(t)$. Morphisms of pre-$t$-motives are defined to be left $\overline{K}(t)[\sigma^l,\,\sigma^{-l}]$-module homomorphisms between pre-$t$-motives of level $l$.
\end{definition}

By $\sigma^l f=f^{(-l)} \sigma^l=f \sigma$, which holds for any $f \in \mathbb{F}_{p^l}(t)$, Papanikolas~\cite{Papanikolas2008} showed that the category $\mathcal{P}_l$ of level $l$ pre-$t$-motives has a structure of $\mathbb{F}_{p^l}(t)$-linear category.
He further proved in \cite[Theorem 3.2.13]{Papanikolas2008} that $\mathcal{P}_l$ is a rigid abelian tensor category over $\mathbb{F}_{p^l}(t)$ where the tensor product {\color{black}operation is given as follows. For two pre-$t$-motives $P$ and $P^\prime$ of level $l$, we define $P\otimes P^\prime:=P\otimes_{\overline{K}(t)}P^\prime$, on which $\sigma^l$ acts diagonally.}

\begin{example}[{\cite[3.2.8]{Papanikolas2008}}]
 The \textit{trivial pre-$t$-motive} denoted by $\mathbf{1}$ is the one-dimensional $\overline{K}(t)$-vector space $\overline{K}(t)$ with the $\sigma$-action given by $\sigma^l f:=f^{(-l)}$ for $f\in \overline{K}(t)$. 
\end{example}
\begin{example}[{\cite[3.2.6]{Papanikolas2008}}]
   The \textit{Carlitz motive} denoted by $C_l$ is $\overline{K}(t)$ with $\sigma^l$-action given by $\sigma f:= (t-\theta)f^{(-l)}$ for $f\in \overline{K}(t)$. For $n \geq 1$, the $n$-th tensor power of $C_l$ is denoted by $C^{\otimes n}_l:=C_l\otimes \cdots \otimes C_l$ ($n$ times). So underlying $\overline{K}(t)$-vector space of $C^{\otimes n}_l$ is also $\overline{K}(t)$ and the $\sigma^l$-action is given by $\sigma^l f:= (t-\theta)^n f^{(-l)}$ for $f\in \overline{K}(t)$.
\end{example}

Let $\{ m_1,\ldots,m_r\}$ be a fixed $\overline{K}(t)$-basis of a pre-$t$-motive $P$ of level $l$, then we have $\Phi\in \operatorname{Mat}_{r}(\overline{K}(t))$ representing the $\sigma^l$-action as $ \sigma {\bf{m}}=\Phi {\bf{m}}$ where ${\bf{m}}=(m_1,\ldots,m_r)^{tr}$. 
Throughout this paper, when we say that a pre-$t$-motive $P$ is defined by the matrix $\Phi\in \operatorname{Mat}_{r}(\overline{K}(t))$ in the situation above for convenience. 
We note that the matrix $\Phi$ defining $P$ above is invertible since $P$ is a left $\overline{K}(t)[\sigma^l,\sigma^{-l}]$-module.

We recall the techniques of derived pre-$t$-motives, which are introduced in \cite{Chang2011} and enable us to handle periods of pre-$t$-motives (see the next subsection for the definition) of different levels at the same time.
\begin{definition}[{\cite[Definition 2.2.1]{Chang2011}}]\label{Defderived}
    Let $P$ be the pre-$t$-motives of level $l$ defined by a matrix $\Phi$ in $\operatorname{GL}_r(\overline{K}(t))$. Then, its $s$-th derived pre-$t$-motive $P^{(s)}$ is defined to be the pre-$t$-motive of level $ls$ whose $\sigma^{ls}$-action is represented by 
    \begin{equation}        
    \Phi^\prime:=\Phi^{(-(s-1)l)}\Phi^{(-(s-2)l)}\cdots\Phi^{(-l)}\Phi.
    \end{equation}
\end{definition}

\subsection{Rigid analytically trivialities of pre-$t$-motives}
\label{subsectionrat}
In this subsection, we recall the notion of rigid analytical trivialities of pre-$t$-motives. 
Throughout this subsection, we consider fixed $l$ and put $q:=p^l$. Pre-$t$-motives of level $l$ are simply called pre-$t$-motives if it can cause no confusion.

For a given pre-$t$-motive $P$, we put
\begin{equation}
    P^{\text{B}}:=\{a \in \mathbb{L}\otimes_{\overline{K}(t)}P \mid \sigma^l(a)=a \}. 
\end{equation}
where $\sigma^l $-action on $\mathbb{L}\otimes_{\overline{K}(t)}P$ is given by $\sigma^l(f\otimes m):=f^{(-l)}\otimes \sigma m$ for $f \in \mathbb{L}$ and $m \in P$ (\cite{Papanikolas2008}). Note that $P^{\text{B}}$ is an $\mathbb{F}_q(t)$-vector space, and we call $P^{\text{B}}$ the \textit{Betti realization} of $P$.
If the natural map 
\begin{equation}
\mathbb{L}\otimes_{\mathbb{F}_q(t)}P^{\text{B}} \rightarrow \mathbb{L}\otimes_{\overline{K}(t)}P
\end{equation}
is an isomorphism of $\mathbb{L}$-vector spaces, then we say that $P$ is \textit{rigid analytically trivial} (\cite{Papanikolas2008}). 

Rigid analytically trivial pre-$t$-motives form a neutral Tannakian category over $\mathbb{F}_q(t)$ as the following theorem claims (for the definition of Tannakian category, we refer the readers to \cite{Deligne1982}). 
We will study periods of rigid analytically trivial pre-$t$-motives by Tannakian duality.

\begin{Theorem}[{\cite[Theorem 3.3.15]{Papanikolas2008}}]
The category $\mathcal{R}$ consisting of all rigid analytically trivial pre-$t$-motive {\color{black}forms} a neutral Tannakian category over $\mathbb{F}_q(t)$ with the fiber functor $P\mapsto P^{\text{B}}$.
\end{Theorem}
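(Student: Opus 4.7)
The plan is to verify the axioms of a neutral Tannakian category step by step. The preceding subsection has already established that the ambient category $\mathcal{P}_l$ of all pre-$t$-motives of level $l$ is a rigid abelian $\mathbb{F}_q(t)$-linear tensor category, so what remains is to show (a) that the full subcategory $\mathcal{R}$ is stable in $\mathcal{P}_l$ under direct sums, tensor products, duals, and subquotients, (b) that the functor $\omega := (-)^{\mathrm{B}}$ is $\mathbb{F}_q(t)$-linear, exact, faithful, and compatible with tensor products, and (c) that $\omega$ lands in the category of finite-dimensional $\mathbb{F}_q(t)$-vector spaces and that $\mathrm{End}(\mathbf{1}) = \mathbb{F}_q(t)$.

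The cornerstone computation is the identity $\mathbb{L}^{\sigma^l = 1} = \mathbb{F}_q(t)$. I would prove this by first reducing to the Tate algebra $\mathbb{T}$: a $\sigma^l$-fixed element of $\mathbb{T}$ must have all coefficients in $\overline{\mathbb{F}_p}^{\,p^l = 1} = \mathbb{F}_q$, and a convergence/rationality argument then forces such an element to lie in $\mathbb{F}_q(t)$, which extends to $\mathbb{L}$ by passing to fractions. Given this identity, rigid analytic triviality of $P$ immediately yields $\dim_{\mathbb{F}_q(t)} P^{\mathrm{B}} = \dim_{\overline{K}(t)} P$, so $\omega$ is valued in finite-dimensional $\mathbb{F}_q(t)$-vector spaces, and evaluation at $\mathbf{1}$ gives $\mathrm{End}(\mathbf{1}) = \mathbb{F}_q(t)$. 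It also provides the tensor compatibility $\omega(P \otimes Q) \cong \omega(P) \otimes_{\mathbb{F}_q(t)} \omega(Q)$: the inclusion $\supseteq$ is formal from $\sigma^l(a \otimes b) = \sigma^l(a) \otimes \sigma^l(b)$, and equality of dimensions upgrades it to an isomorphism.

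Next I would verify closure of $\mathcal{R}$ under the standard operations. If $P$ is rigid analytically trivial with rigidifying matrix $\Psi \in \mathrm{GL}_r(\mathbb{L})$ solving $\Psi^{(-l)} = \Phi \Psi$, then the dual is rigidified by $(\Psi^{-1})^{\mathrm{tr}}$, tensor products by the Kronecker product $\Psi \otimes \Psi'$, and direct sums by block diagonals, so $\mathcal{R}$ is closed under these. For a sub-pre-$t$-motive $P' \subset P$, the trivialization $\mathbb{L} \otimes P \cong \mathbb{L} \otimes P^{\mathrm{B}}$ exhibits $\mathbb{L} \otimes P$ as a trivial $\sigma^l$-module; since sub-$\sigma^l$-modules of trivial modules over $\mathbb{L}$ (with constants $\mathbb{F}_q(t)$) are again trivial, the $\sigma^l$-invariants of $\mathbb{L} \otimes P'$ span it over $\mathbb{L}$, so $P' \in \mathcal{R}$, and similarly for quotients. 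The same trivialization argument shows that $\omega$ restricted to $\mathcal{R}$ is exact: after base change to $\mathbb{L}$ every short exact sequence in $\mathcal{R}$ becomes a short exact sequence of trivial $\sigma^l$-modules, on which $\sigma^l$-invariants are manifestly exact, and one descends back via faithful flatness. Faithfulness of $\omega$ is then immediate from the dimension formula: $P^{\mathrm{B}} = 0$ forces $P = 0$.

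The main obstacle, in my view, is the subobject step, which is why the fixed-field identity must be established first. Concretely, one must rule out the possibility that $P' \subset P$ is $\sigma^l$-stable and $\overline{K}(t)$-rational but picks up ``new'' $\sigma^l$-fixed vectors only after a larger base change than $\mathbb{L}$; the identity $\mathbb{L}^{\sigma^l=1} = \mathbb{F}_q(t)$ is precisely what guarantees that a trivial sub-$\sigma^l$-module of a trivial module over $\mathbb{L}$ is again defined over $\mathbb{F}_q(t)$, so that rigid analytic triviality descends to subobjects. Once this Frobenius-descent point is in place, the verifications of tensor compatibility, exactness, and faithfulness run in parallel, and the remaining Tannakian axioms (associativity and commutativity constraints, unit object, internal Hom) are inherited from $\mathcal{P}_l$.
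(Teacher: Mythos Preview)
The paper does not prove this theorem at all: it is quoted verbatim as \cite[Theorem 3.3.15]{Papanikolas2008} and used as a black box, so there is no ``paper's own proof'' to compare against. Your outline is essentially the verification that Papanikolas carries out in \cite{Papanikolas2008}, so in that sense your approach matches the original source rather than the present paper.

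A couple of technical points in your sketch deserve tightening. First, the passage from $\mathbb{T}^{\sigma^l=1}=\mathbb{F}_q[t]$ to $\mathbb{L}^{\sigma^l=1}=\mathbb{F}_q(t)$ is not quite ``passing to fractions'': a $\sigma^l$-fixed element $f/g$ of $\mathbb{L}$ need not have $\sigma^l$-fixed numerator and denominator separately, so one needs an argument (e.g.\ clearing denominators against a $\sigma^l$-orbit, or invoking the fact that $\mathbb{L}$ is a difference field with algebraically closed constants in an appropriate sense). Second, the claim that sub-$\sigma^l$-modules of a trivial $\mathbb{L}$-module are again trivial is the genuine content of the subobject step and is not automatic; in Papanikolas this is handled via a Wronskian/difference-linear-algebra argument rather than by pure descent. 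These are not fatal gaps---both are standard---but your write-up treats them as formalities when they are the substantive parts of the proof.
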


In this paper, for a rigid analytically trivial pre-$t$-motive $P$, we denote by $\langle P \rangle $ the Tannakian sub-category of $\mathcal{R}$ generated by $P$. By Tannakian duality, there exists an algebraic group $\Gamma_P$ such that $\langle P \rangle $ is equivalent to the category $\operatorname{Rep}_{\mathbb{F}_q(t)}(\Gamma_P)$ of finite dimentional linear representations of $\Gamma_P$ over $\mathbb{F}_{q}(t)$. The {\color{black}algebraic} group $\Gamma_P$ is called the \textit{$t$-motivic Galois group} of $P$ (\cite{Papanikolas2008}).

It was shown by Papanikolas~(\cite[Theorem 3.3.9]{Papanikolas2008}) that we have the following criterion for rigid analytic triviality of pre-$t$-motives. See also \cite{Anderson1986}.

\begin{Proposition}[{\cite[Theorem 3.3.9]{Papanikolas2008}}]
    Suppose that $P$ is a pre-$t$-motive of dimension $r$ over $\overline{K}(t)$ defined by $\Phi \in \operatorname{GL}_r(\overline{K}(t))$. Then $P$ is rigid analytically trivial if and only if there exists $\Psi \in { \rm GL}_r (\mathbb{L})$ such that $\Psi^{(-l)}=\Phi \Psi$.
\end{Proposition}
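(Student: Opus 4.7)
My plan is to fix a $\overline{K}(t)$-basis $\mathbf{m}=(m_1,\ldots,m_r)^{tr}$ of $P$ with $\sigma^l\mathbf{m}=\Phi\mathbf{m}$, and to translate rigid analytic triviality into the existence of a fundamental matrix of $\sigma^l$-invariant solutions over $\mathbb{L}$. Any element of $\mathbb{L}\otimes_{\overline{K}(t)}P$ can be written as $\sum_i c_i m_i$ with $c_i\in\mathbb{L}$, and the semilinearity of $\sigma^l$ together with the twist convention $f\mapsto f^{(-l)}$ turn the invariance condition $\sigma^l a=a$ into an explicit linear relation between the coordinate column $(c_i)$ and its twist $(c_i^{(-l)})$ through $\Phi$. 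Assembling $r$ such solution columns into a single $r\times r$ matrix converts the problem into the matrix functional equation $\Psi^{(-l)}=\Phi\Psi$.

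For the ``only if'' direction, if $P$ is rigid analytically trivial then by definition $\dim_{\mathbb{F}_q(t)}P^{\text{B}}=\dim_{\overline{K}(t)}P=r$, so I will choose an $\mathbb{F}_q(t)$-basis $\{u_1,\ldots,u_r\}$ of $P^{\text{B}}$. Expanding each $u_j$ in the basis $\mathbf{m}$ yields a matrix $\Psi\in\operatorname{Mat}_r(\mathbb{L})$; the hypothesis that $\mathbb{L}\otimes_{\mathbb{F}_q(t)}P^{\text{B}}\to\mathbb{L}\otimes_{\overline{K}(t)}P$ is an $\mathbb{L}$-linear isomorphism forces $\Psi\in\operatorname{GL}_r(\mathbb{L})$, and the $\sigma^l$-invariance of the $u_j$ is precisely the functional equation derived in the first step.

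For the ``if'' direction, given $\Psi\in\operatorname{GL}_r(\mathbb{L})$ with $\Psi^{(-l)}=\Phi\Psi$, I will read off the columns of $\Psi$ as $r$ elements $u_1,\ldots,u_r\in\mathbb{L}\otimes P$. The functional equation is precisely the $\sigma^l$-invariance of each $u_j$, and invertibility of $\Psi$ gives their $\mathbb{L}$-linear independence, so $\{u_j\}$ is an $\mathbb{L}$-basis of $\mathbb{L}\otimes_{\overline{K}(t)}P$ consisting of elements of $P^{\text{B}}$; in particular the natural map is surjective. To see it is an isomorphism, I will take an arbitrary $v\in P^{\text{B}}$, expand $v=\sum_j\lambda_j u_j$ uniquely with $\lambda_j\in\mathbb{L}$, apply $\sigma^l$, and use the invariance of $v$ and of the $u_j$ together with $\mathbb{L}$-linear independence of the $u_j$ to deduce $\lambda_j^{(-l)}=\lambda_j$, so that each $\lambda_j$ lies in the fixed field $\mathbb{L}^{\sigma^l=\operatorname{id}}$.

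The main obstacle will be the semilinear-algebra identity $\mathbb{L}^{\sigma^l=\operatorname{id}}=\mathbb{F}_q(t)$, without which the $\mathbb{F}_q(t)$-span of the $u_j$ could a priori be strictly smaller than $P^{\text{B}}$. I would either cite this from the foundational literature (Anderson, Papanikolas) or record it as a short preliminary lemma: a $\sigma^l$-fixed element of the Tate algebra $\mathbb{T}$ has all coefficients in $\mathbb{F}_q$, and because $\mathbb{F}_q$ is discrete in $\mathbb{C}_\infty$ the convergence condition forces all but finitely many coefficients to vanish, yielding a polynomial in $\mathbb{F}_q[t]$; passing to the fraction field gives $\mathbb{F}_q(t)$. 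Once this identification is available, the rest of the argument reduces to the matrix bookkeeping sketched above.
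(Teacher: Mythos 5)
The paper does not prove this statement; it is cited directly from Papanikolas \cite[Theorem 3.3.9]{Papanikolas2008}, so there is no in-paper argument to compare against. Your outline is the standard proof of this standard result: translate the $\sigma^l$-invariance condition into a Frobenius difference equation in coordinates, and isolate the key semilinear-algebra input $\mathbb{L}^{\sigma^l = \operatorname{id}} = \mathbb{F}_q(t)$. Two points, however, deserve more care before this could be called complete.

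First, there is a bookkeeping slip in the coordinate translation. With the convention $\sigma^l\mathbf{m} = \Phi\mathbf{m}$ for the column vector $\mathbf{m} = (m_1,\ldots,m_r)^{tr}$, if $a = \sum_i c_i m_i$ then $\sigma^l a = \sum_j \bigl(\sum_i c_i^{(-l)}\Phi_{ij}\bigr)m_j$, so $\sigma^l$-invariance reads $\Phi^{tr}\mathbf{c}^{(-l)} = \mathbf{c}$ for the coordinate column $\mathbf{c}$. Assembling coordinate rows of a putative $\mathbb{F}_q(t)$-basis of $P^{\text{B}}$ into a matrix $\Theta$ gives $\Theta^{(-l)}\Phi = \Theta$, which is equivalent to $\Psi^{(-l)} = \Phi\Psi$ only after setting $\Psi := \Theta^{-1}$. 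In other words, it is the entries of $\Psi^{-1}\mathbf{m}$, not the columns of $\Psi$, that are the $\sigma^l$-fixed elements: indeed $\sigma^l(\Psi^{-1}\mathbf{m}) = (\Psi^{(-l)})^{-1}\Phi\mathbf{m} = \Psi^{-1}\mathbf{m}$. This is exactly why the paper defines periods through $\Theta := \Psi^{-1}$ rather than $\Psi$; reading off the columns of $\Psi$ as you propose would produce the wrong functional equation.

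Second, your argument for the fixed-field lemma is complete only for $\mathbb{T}$: a $\sigma^l$-fixed element $\sum a_i t^i \in \mathbb{T}$ has all $a_i \in \mathbb{F}_q$, and the Gauss-norm condition $|a_i| \to 0$ with $\mathbb{F}_q$ discrete forces it to be a polynomial. Passing to the fraction field $\mathbb{L}$ is not automatic: a $\sigma^l$-fixed $f = g/h$ with $g,h \in \mathbb{T}$ need not have $\sigma^l$-fixed numerator and denominator. One needs that $\mathbb{T}$ is a principal ideal domain and an appeal to unique factorization (or, equivalently, writing $g/h$ in lowest terms and comparing with $g^{(-l)}/h^{(-l)}$) to conclude $g^{(-l)}$ and $h^{(-l)}$ differ from $g$ and $h$ by the same unit, and then argue the unit can be normalized away. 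This step is genuinely nontrivial and is the content of \cite[Lemma 3.3.2]{Papanikolas2008}; it should be cited or carried out rather than compressed into the phrase ``passing to the fraction field.''
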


For a rigid analytically trivial pre-$t$-motive $P$ defined by $\Phi$, the matrix $\Psi$ in the proposition above is called a \textit{rigid analytic trivialization} of $\Phi$. {\color{black}We mention that rigid analytic trivialization of $\Phi$ is not unique. In fact,} if $\Psi$ and $\Psi^\prime$ are two rigid analytic trivializations of a matrix $\Phi$, then $\Psi^{-1}\Psi^\prime \in { \rm GL} _r(\mathbb{F}_q(t))$ (\cite[\S 4.1]{Papanikolas2008}). Let us write $\Psi^{-1}=\Theta=(\Theta_{ij})$. If an entry $\Theta_{ij}$ converges at $t=\theta$, then the value $\Theta_{ij}|_{t=\theta}$ is called a \textit{period} of $P$ (cf.~\cite{Papanikolas2008}). Because of the following proposition, the entries of the matrices $\Psi$ we consider in the following context are entire.

\begin{Proposition}[{\cite[Proposition 3.1.3]{Anderson2004}}] \label{propentire}
  Given
        $ \Phi \in \operatorname{Mat}_{r \times r}(\overline{K}[t]),
    $
    suppose that there exists  
    $\psi \in \operatorname{Mat}_{r\times 1}(\mathbb{T})$
    so that $ \psi^{(-l)}=\Phi \psi.$
    If  $\det \Phi|_{t=\theta} \neq 0$, then all entries of $\psi$ are entire.
\end{Proposition}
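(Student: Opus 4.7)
The plan is to iterate the functional equation and exploit a dichotomy for the radius of convergence of $\psi$, using the hypothesis $\det\Phi|_{t=\theta}\neq 0$ to rule out the boundary case. Starting from $\psi^{(-l)}=\Phi\psi$, apply the $l$-fold twist to both sides to obtain $\psi = \Phi^{(l)}\psi^{(l)}$, and iterate to get
\begin{equation}
\psi = \Phi^{(l)}\Phi^{(2l)}\cdots \Phi^{(nl)}\,\psi^{(nl)} \qquad (n\geq 1),
\end{equation}
where each factor $\Phi^{(kl)}\in \operatorname{Mat}_r(\overline K[t])$ is polynomial in $t$ of the same degree as $\Phi$.

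Write $\psi=\sum_{i\geq 0}a_i t^i$ with $a_i\in \operatorname{Mat}_{r\times 1}(\mathbb{C}_\infty)$, and let $R$ denote the radius of convergence of any entry, so $R\geq 1$ because $\psi\in\mathbb T$. Since the coefficients of $\psi^{(l)}$ are the $p^l$-th powers of those of $\psi$, the radius of convergence of $\psi^{(l)}$ equals $R^{p^l}$; combined with the fact that $\Phi^{(l)}$ is a polynomial (hence entire), the identity $\psi=\Phi^{(l)}\psi^{(l)}$ forces $R\geq R^{p^l}$. Together with $R\geq 1$ this yields the dichotomy $R\in\{1,\infty\}$, so it suffices to rule out $R=1$.

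For this, one uses the reversed iterated form $\psi=(\Phi^{[n]})^{-1}\psi^{(-nl)}$, where $\Phi^{[n]}:=\Phi^{(-(n-1)l)}\cdots\Phi^{(-l)}\Phi\in\operatorname{Mat}_r(\overline K[t])$ and the entries of $(\Phi^{[n]})^{-1}$ are rational functions with poles only at the zeros of $\det\Phi^{[n]}$. Tracking how the $(-l)$-twist moves zeros — a zero of $\det\Phi^{(-kl)}$ at $\alpha$ corresponds to a zero of $\det\Phi$ at $\alpha^{p^{kl}}$ — and using $\det\Phi(\theta)\neq 0$, one shows that for a suitably chosen $n$ the point $\theta$ lies outside the pole locus of $(\Phi^{[n]})^{-1}$. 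Since $\psi^{(-nl)}\in\mathbb T$ (inverse twists preserve $\mathbb T$), the expression $(\Phi^{[n]})^{-1}\psi^{(-nl)}$ then gives an analytic continuation of $\psi$ across the unit circle into a neighborhood of $\theta$. This forces $R>1$, and by the dichotomy $R=\infty$. The additional condition $[K_\infty(a_0,a_1,\ldots):K_\infty]<\infty$ required for entireness follows separately from the coefficient recursion $a_n=\sum_k B_k^{(l)}a_{n-k}^{p^l}$ (where $B_k^{(l)}$ are the $t$-coefficients of $\Phi^{(l)}$) together with the fact that $\Phi$ has coefficients in $\overline K$, so that all $a_i$ live in a single finitely generated extension of $K_\infty$.

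The main obstacle is the third step: making the pole-tracking argument precise, in particular justifying the choice of $n$ for which $\theta$ is not a zero of $\det\Phi^{[n]}$ despite the potential appearance of new zeros at $\alpha^{1/p^{kl}}$ after repeated inverse twists, and then rigorously carrying out the analytic continuation in the rigid analytic setting using the ultrametric properties of $\mathbb C_\infty$. This is the precise point at which the single hypothesis $\det\Phi(\theta)\neq 0$ must be converted into a genuine obstruction to $R=1$.
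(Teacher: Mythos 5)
The paper cites this result from Anderson--Brownawell--Papanikolas without reproducing a proof, so I evaluate your argument on its own. Your iteration and the dichotomy $R\in\{1,\infty\}$ for the radius of convergence are correct, and so is the recursion $a_i=\bigl(\sum_{k}B_k a_{i-k}\bigr)^{p^l}$ you mention at the end. The genuine gap is in the step that is supposed to rule out $R=1$. The radius of convergence of $\psi^{(-nl)}$ is $R^{1/p^{ln}}$, which is never larger than $R$ (and equals $1$ when $R=1$); negative twisting \emph{shrinks} the region of convergence, it does not enlarge it. Thus the product $(\Phi^{[n]})^{-1}\psi^{(-nl)}$ converges only on the intersection of the closed unit disk with the complement of the zero locus of $\det\Phi^{[n]}$ --- a subset, not a superset, of where $\psi$ is already defined --- and in particular it is not defined near $\theta$, where $|\theta|_\infty=p^l>1$. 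There is no analytic continuation to be had here, so your $R>1$ conclusion does not follow. A secondary problem: to put $\theta$ outside the pole locus of $(\Phi^{[n]})^{-1}$ you would need $\det\Phi(\theta^{p^{lk}})\neq 0$ for every $0\leq k<n$, whereas the hypothesis only gives $k=0$.

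What does work, and in fact renders the dichotomy unnecessary, is the coefficient bootstrap you gesture at in your last step. Write $\Phi=\sum_{k=0}^{d}B_k t^k$; the functional equation gives $a_i=\bigl(\sum_{k=0}^{\min(i,d)}B_k a_{i-k}\bigr)^{p^l}$ entrywise. Since $\psi\in\mathbb{T}$, one has $|a_i|_\infty\to 0$, so one may pick $N$ with $\sup_{i\geq N}|a_i|_\infty<C^{-q/(q-1)}$ where $C:=\max_k\lVert B_k\rVert$ and $q:=p^l$. The ultrametric inequality then kills the implicit $B_0a_i$ term and yields $|a_i|_\infty\leq C^{q}\max_{1\leq k\leq d}|a_{i-k}|_\infty^{q}$ for $i\geq N+d$, from which $\sup_{i\geq N+md}|a_i|_\infty$ decays doubly exponentially in $m$; this forces $|a_i|_\infty^{1/i}\to 0$. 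The condition $[K_\infty(a_0,a_1,\dots):K_\infty]<\infty$ also needs more care than you allot it: the recursion determines $a_i$ only as a root of an Artin--Schreier-type equation $x-(B_0x)^{q}\cdot(\cdot)=\text{(known)}$ over the previously generated field, so degrees could a priori grow; one must argue (e.g.\ via the convergent inverse of the additive map $x\mapsto x-(B_0x)^{q}$ on a small ball, or via the ambient finite extension $K_\infty(B_k,a_0,\dots,a_d)$ being complete) that all $a_i$ in fact lie in a single finite extension of $K_\infty$.
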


\begin{example}[{\cite[3.3.4]{Papanikolas2008}}]\label{Omega}
    The pre-$t$-motive $\mathbf{1}$ is defined by the matrix $(1) \in  \operatorname{GL}_1(\overline{K}(t))$, which has rigid analytic trivialization $(1) \in  \operatorname{GL}_1(\mathbb{L})$. Following \cite{Anderson2004}, we consider the infinite product
    \begin{equation}
        \Omega_l(t):= (-\theta)^{\frac{-q}{q-1}}\prod_{i \geq 1} \left( 1-\frac{t}{\theta^{q^i}} \right)\in \mathbb{C}_\infty((t)),
    \end{equation}
    where  $(-\theta)^{\frac{1}{q-1}}$ is a fixed $(q-1)$th root of $-\theta$. From the definition of $\Omega_l$, one can show that $\Omega^{(-l)}_l=(t-\theta)\Omega_l$ and so $(\Omega_l) \in  \operatorname{GL}_1(\mathbb{L})$ is a rigid analytic trivialization of the matrix $(t-\theta)\in  \operatorname{GL}_1(\overline{K}(t))$ defining {\color{black}the} Carlitz motive $C_l$.  Proposition \ref{propentire} shows that $\Omega_l$ is entire. The value  
    \begin{equation}
        \tilde{\pi}_l:=\Omega^{-1}_l|_{t=\theta}=\theta(-\theta)^{\frac{1}{q-1}}\prod_{i \geq 1}\left( 1-\frac{\theta}{\theta^{q^i}} \right)^{-1}
    \end{equation}
is a period of $C$ and is known as a fundamental period of the Carlitz module (\cite{Carlitz1935}). This value is viewed as a positive characteristic analogue of the complex period $2\pi \sqrt{-1}$ and is proven to be transcendental over $K$ by Wade~(\cite{Wade1941}) such as the classical case. 
\end{example}

Let $P$ be the pre-$t$-motives of level $l$ defined by a matrix $\Phi$ and $\Psi$ be a rigid analytic trivialization of $\Phi$. It is mentioned in \cite{Chang2011} that its $s$-th derived pre-$t$-motive $P^{(s)}$, which is the pre-$t$-motive of level $ls$ defined by $\Phi^\prime:=\Phi^{(-(s-1)l)}\Phi^{(-(s-2)l)}\cdots\Phi^{-l}$, is also rigid analytically trivial. Indeed, the defining matrix $\Phi^\prime$ has the same rigid analytically trivialization $\Psi$ as $\Phi$ since we have \begin{equation}
    \Psi^{(-ls)}=(\Psi^{-l})^{-l(s-1)}=(\Phi \Psi)^{-l(s-1)}=(\Phi^{(-l)}\Phi \Psi)^{-l(s-1)}=\cdots=\Phi^\prime \Phi.
\end{equation}

\subsection{Papanikolas' theory on $t$-motivic Galois groups}\label{subsectionPapanikolas}
This subsection is devoted to recalling Papanikolas' theory on relationships between the transcendence property of periods and $t$-motivic Galois groups. We continue to consider fixed $l \geq 1$ and to put $q:=p^l$. Pre-$t$-motives of level $l$ are simply called pre-$t$-motives if it can cause no confusion.

{\color{black}Suppose that we have $\Phi\in \operatorname{GL}_r(\overline{K}(t))$ and $\Psi\in \operatorname{GL}_r(\mathbb{L})$ for which $\Psi^{(-l)}=\Phi \Psi$.}
 We put $\Psi_1:=(\Psi_{ij}\otimes1) \in \operatorname{GL}_r(\mathbb{L}\otimes_{\overline{K}(t)}\mathbb{L})$, $\Psi_2:=(1\otimes\Psi_{ij})\in \operatorname{GL}_r(\mathbb{L}\otimes_{\overline{K}(t)}\mathbb{L})$, and \begin{equation}
 \widetilde{\Psi}:=\Psi_1^{-1}\Psi_2.
 \end{equation}
 Let us consider the algebraic sub-variety 
\begin{equation}
    \Gamma_{\Psi}:=\operatorname{Spec}\mathbb{F}_q(t)[\widetilde{\Psi}_{ij},\,1/\det\widetilde{\Psi}] \label{GammaPsi}
\end{equation}
 of $\operatorname{GL}_{r/\mathbb{F}_q(t)}$ over $\mathbb{F}_q(t)$, the smallest closed subscheme of $\operatorname{GL}_{r/\mathbb{F}_q(t)}$ which has $\widetilde{\Psi}$ as its $\mathbb{L}\otimes_{\overline{K}(t)} \mathbb{L}$-valued point. 
The following theorem is Chang's refinement of Papanikolas' theorem, which claims that the variety $\Gamma_{\Psi}$ is isomorphic to the $t$-motivic Galois group of a pre-$t$-motive $P$ if $\Psi$ is a rigid analytic trivialization {\color{black}of $\Phi$} defining $P$ and has {\color{black}connection with transcendence theory.}
This theorem plays a pivotal role in the proof of Theorem \ref{main}.
It should be mentioned that the proof of equation \eqref{Papanikolasthmtrdeg} below highly depends on the refined version (\cite[Theorem 1.2]{Chang2009}) of \textit{ABP-criterion} established in \cite{Anderson2004}.

\begin{Theorem}[{\cite[(1.1)]{Chang2009}, \cite[Theorem 5.2.2]{Papanikolas2008}}]
\label{ThmPapanikolas}
Take $\Phi$ in $\operatorname{Mat}_{ r}(\overline{K}[t])\cap\operatorname{GL}_r(\overline{K}(t))$ which has a rigid analytic trivialization $\Psi$ in $\operatorname{Mat}_{ r}(\mathbb{E})\cap\operatorname{GL}_r(\mathbb{L})$, and let $P$ be the pre-$t$-motive defined by $\Phi$. Then $\Gamma_\Psi$ defined in Equation \eqref{GammaPsi} is a geometrically smooth and geometrically connected algebraic subgroup over $\mathbb{F}_q(t)$ of $\operatorname{GL}_{r\, /\mathbb{F}_q(t)}$, and {\color{black} $\Gamma_{\Psi}$ is isomorphic to $\Gamma_{P}$ over $\mathbb{F}_q(t)$ as algebraic groups. Moreover, } if $\det \Phi |_{\theta^{1/q^i}}\neq 0$ for all $i \geq1$, we further have 
\begin{equation}
    \operatorname{trdeg}_{\overline{K}}\overline{K}\left( \Psi_{ij} |_{t=\theta}\right)= \dim \Gamma_P, \label{Papanikolasthmtrdeg}
\end{equation}
 {\color{black}where $\overline{K}\left( \Psi_{ij} |_{t=\theta}\right)$ is the field generated by all the entries of $\Psi$ evaluated at $t=\theta$ over $\overline{K}$} (refer also to \cite[Theorem 3.2]{Mishiba2017}). 
\end{Theorem}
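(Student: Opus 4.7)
The plan is to follow the three-part structure of the statement: (a) prove $\Gamma_\Psi$ is a geometrically smooth and geometrically connected algebraic subgroup of $\operatorname{GL}_{r/\mathbb{F}_q(t)}$; (b) identify $\Gamma_\Psi$ with the Tannakian Galois group $\Gamma_P$; (c) establish the transcendence equality via the ABP criterion. For (a) the key observation is that the matrix $\widetilde{\Psi} := \Psi_1^{-1}\Psi_2 \in \operatorname{GL}_r(\mathbb{L}\otimes_{\overline{K}(t)}\mathbb{L})$ is fixed by the diagonal $l$-fold Frobenius twist: applying the functional equation $\Psi^{(-l)} = \Phi \Psi$ in each tensor factor and using $(\Phi\otimes 1) = (1\otimes \Phi)$ inside $\mathbb{L}\otimes_{\overline{K}(t)}\mathbb{L}$ (since $\Phi$ has entries in $\overline{K}(t)$), one verifies $\widetilde{\Psi}^{(-l)} = \widetilde{\Psi}$, forcing its entries into an $\mathbb{F}_q(t)$-subalgebra. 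That $\Gamma_\Psi$ carries a group law I would exhibit by introducing a third independent copy $\Psi_3$ and reading off the comultiplication from the identity $\Psi_1^{-1}\Psi_3 = (\Psi_1^{-1}\Psi_2)(\Psi_2^{-1}\Psi_3)$, with inversion coming from the analogous swap of tensor factors. Smoothness follows because $\mathbb{L}\otimes_{\overline{K}(t)}\mathbb{L}$ is a domain and the coordinate ring of $\Gamma_\Psi$ is an integral domain localized at $\det \widetilde{\Psi}$, while geometric connectedness reduces to showing $\mathbb{F}_q(t)$ is algebraically closed inside the field generated by the entries of $\widetilde{\Psi}$.

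For (b), I would apply Tannakian duality. The Betti realization $P \mapsto P^{\mathrm{B}}$ is a neutral fiber functor on $\langle P \rangle$, and by definition its automorphism group scheme is $\Gamma_P$. The trivialization $\Psi$ provides an $\mathbb{F}_q(t)$-basis of $P^{\mathrm{B}}$ in which every object of $\langle P \rangle$ — constructed from $P$ by tensor, dual, subobject, and quotient operations — inherits an explicit $\Gamma_\Psi$-action through the matrix coordinates of $\widetilde{\Psi}$. One then verifies that the resulting functor $\langle P \rangle \to \operatorname{Rep}_{\mathbb{F}_q(t)}(\Gamma_\Psi)$ is a fully faithful tensor equivalence: faithfulness follows because $\widetilde{\Psi}$ cuts out precisely the defining ideal of $\Gamma_\Psi$ inside $\operatorname{GL}_r$, and fullness is obtained by interpreting $\Gamma_\Psi$-equivariant maps of representations as $\sigma^l$-equivariant maps of pre-$t$-motives via $\Psi$. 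Tannakian reconstruction then yields the isomorphism $\Gamma_\Psi \simeq \Gamma_P$ over $\mathbb{F}_q(t)$.

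Stage (c) is where I expect the main obstacle. From (b) one has $\dim \Gamma_P = \dim \Gamma_\Psi$, and because $\widetilde{\Psi}$ is by construction a generic point of $\Gamma_\Psi$ this quantity equals $\operatorname{trdeg}_{\mathbb{F}_q(t)} \mathbb{F}_q(t)(\widetilde{\Psi}_{ij})$, which is computable from $\operatorname{trdeg}_{\overline{K}(t)} \overline{K}(t)(\Psi_{ij})$ via the base change through $\mathbb{L}\otimes_{\overline{K}(t)}\mathbb{L}$. The bridge to the specialized transcendence degree $\operatorname{trdeg}_{\overline{K}} \overline{K}(\Psi_{ij}|_{t=\theta})$ is supplied by Chang's refinement \cite[Theorem 1.2]{Chang2009} of the ABP criterion from \cite{Anderson2004}: any $\overline{K}$-algebraic relation among the specialized values $\Psi_{ij}|_{t=\theta}$ can be lifted to a $\overline{K}[t]$-algebraic relation among the $\Psi_{ij}$ that vanishes at $t=\theta$. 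The nonvanishing hypothesis $\det \Phi |_{t=\theta^{1/q^i}} \neq 0$ for all $i\geq 1$ is essential here, as it allows one to iterate this lift through successive Frobenius twists using $\Psi^{(-l)} = \Phi \Psi$ and propagate any obstruction to the generic fiber, where it would force a nontrivial equation on $\Gamma_\Psi$. This yields the inequality $\operatorname{trdeg}_{\overline{K}} \overline{K}(\Psi_{ij}|_{t=\theta}) \geq \dim \Gamma_P$; the reverse inequality is the easier direction coming from specialization decreasing transcendence degree. The depth of the ABP criterion — which rests on delicate analytic estimates on the growth of entire power series and their Frobenius iterates — is what makes this stage the hardest part of the proof.
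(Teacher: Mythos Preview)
The paper does not prove this theorem; it is quoted as a black box from the cited references \cite[Theorem~5.2.2]{Papanikolas2008} and \cite[(1.1)]{Chang2009}, with only the remark that the transcendence equality \eqref{Papanikolasthmtrdeg} rests on Chang's refined ABP criterion. There is therefore no proof in the paper against which to compare your proposal.

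That said, your outline follows the architecture of Papanikolas's original argument fairly closely, with one substantive gap. In part (a) you claim smoothness because ``$\mathbb{L}\otimes_{\overline{K}(t)}\mathbb{L}$ is a domain and the coordinate ring of $\Gamma_\Psi$ is an integral domain localized at $\det\widetilde{\Psi}$''. Neither assertion does the work you want: a tensor product of fields over a non-algebraically-closed base need not be a domain, and in any case integrality of a coordinate ring does not imply smoothness. In \cite{Papanikolas2008} the smoothness and geometric connectedness of $\Gamma_\Psi$ are obtained by identifying it with the $\sigma$-Galois group of the Frobenius difference equation $\Psi^{(-l)}=\Phi\Psi$ in a Picard--Vessiot-type framework, where these properties follow from the structure of the associated Picard--Vessiot extension (a simple $\sigma$-ring with no new constants). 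Your comultiplication-via-three-copies idea and your part (b) are correct in spirit and match Papanikolas's approach. Your part (c) is accurate: the equality $\dim\Gamma_\Psi=\operatorname{trdeg}_{\overline{K}(t)}\overline{K}(t)(\Psi_{ij})$ is Papanikolas's result, and the specialization to $t=\theta$ is precisely the content of Chang's refinement \cite[Theorem~1.2]{Chang2009} of the ABP criterion, under the stated nonvanishing hypothesis on $\det\Phi$.
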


Throughout this paper, we always identify $\Gamma_P$ with $\Gamma_\Psi$ if $\Psi$ is a rigid analytic trivialization of a matrix defining a pre-$t$-motive $P$ based on the theorem above.
Further, we simply write $\Gamma_{\Psi}$ and $\Gamma_{P}$ respectively for base changes $\Gamma_{\Psi}\times_{\operatorname{Spec}\mathbb{F}_q(t)} \operatorname{Spec}\overline{\mathbb{F}_q(t)}$ and $\Gamma_{P}\times_{\operatorname{Spec}\mathbb{F}_q(t)} \operatorname{Spec}\overline{\mathbb{F}_q(t)}$, by abuse of language.
If it causes no confusion, these symbols stand also for the group $\Gamma_{\Psi}(\overline{\mathbb{F}_q(t)})\simeq\Gamma_{P}(\overline{\mathbb{F}_q(t)})$ of $\overline{\mathbb{F}_q(t)}$-valued points in what follows.

\begin{example}
For each $l \geq 1$, the Carlitz motive $C_l$ of level $l$ is defined by the matrix $(t -\theta)$ of size $1$, which has a rigid analytic trivialization $\Psi_{C_l}=(\Omega_l)$. The $t$-motivic Galois group $\Gamma_{C_l}$ is a closed subgroup of $\mathbb{G}_{m\,/\mathbb{F}_{p^{l}}(t)}$, since $\widetilde{\Psi}_{C_l}=(\Omega_l^{-1}\otimes \Omega_l)$ is an $\mathbb{L} \otimes_{\overline{K}(t)} \mathbb{L}$-valued point of the latter group. We recall here that the period $\tilde{\pi}_l$ is transcendental over $\overline{K}$. Hence $\Gamma_{C_l}$ is $1$-dimensional and equal to $\mathbb{G}_{m\,/\mathbb{F}_{p^{l}}(t)}$. Similarly, we can show that the $t$-motivic Galois group $\Gamma_{C_l^{(s)}}$ of the $s$-the derived pre-$t$-motive is equal to $\mathbb{G}_{m\,/\mathbb{F}_{p^{ls}}(t)}$. We refer readers to \cite[Theorem 3.5.4]{Papanikolas2008}.
\end{example}

Let $P_1$ and $P_2$ be {\color{black}rigid analytically trivial} pre-$t$-motives defined respectively by the matrices $\Phi_1 \in \operatorname{GL}_{r_1}(K(t))$ and $\Phi_2 \in \operatorname{GL}_{r_2}(K(t))$. 
There exist rigid analytically trivializations $\Psi_1\in \operatorname{GL}_{r_1}(\mathbb{L})$ and $\Psi_2\in \operatorname{GL}_{r_2}(\mathbb{L})$ of these defining matrices. 
We note that the direct sum $P_1\oplus P_2$ is defined by the matrices $\Phi_1\oplus\Phi_2$ with $\Psi_1\oplus\Psi_2$ as its {\color{black} rigid analytic trivialization}. 
Here and throughout this paper, for any square matrices $B_1$ and $B_2$ the symbol $B_1\oplus B_2$ is defined to be the canonical block diagonal matrix
 \begin{equation}
    \begin{pmatrix}
        B_1& O\\
        O & B_2
    \end{pmatrix}.
\end{equation}
By the definition \eqref{GammaPsi}, the algebraic group $\Gamma_{\Psi_1\oplus\Psi_2}$ is a closed subgroup of 
\begin{equation}
    \Gamma_{\Psi_1}\times\Gamma_{\Psi_2}=\left \{ B_1\oplus B_2 \ \middle| \ B_1\in \Gamma_{\Psi_1},\,B_2 \in   \Gamma_{\Psi_2} \right\}.
\end{equation}
As the Tannakian categories $\langle P_1 \rangle $ and $\langle P_2 \rangle $ are subcategories of $\langle P_1 \oplus P_2 \rangle $, Tannakian duality yields faithfully flat morphisms $\pi_i : \Gamma_{P_1\oplus P_2}\twoheadrightarrow \Gamma_{P_i}$ of algebraic groups for $i=1,\,2$ (\cite[Proposition 2.21]{Deligne1982}). We can describe these homomorphisms in terms of inclusions $\Gamma_{P_1}=\Gamma_{\Psi_1} \subset \operatorname{GL}_{r_1},\,\Gamma_{P_2}=\Gamma_{\Psi_2} \subset \operatorname{GL}_{r_2}$, and $\Gamma_{P_1 \oplus P_2}=\Gamma_{\Psi_1 \oplus \Psi_2}  \subset \operatorname{GL}_{r_1} \times \operatorname{GL}_{r_2}$ as follows:

\begin{Lemma}\label{LemmaTannakianProj}
Take rigid analytically trivial pre-$t$-motives $P_1$ and $P_2$. In the notations as above, the following diagram commutes for $i=1,\,2$ (see \cite[Example 2.3]{Mishiba2015} for example):
\begin{equation}
\begin{tikzpicture}[auto]
\node (03) at (0, 2) {$\Gamma_{P_1\oplus P_2}$}; \node (33) at (4, 2) {$\Gamma_{P_1}\times\Gamma_{ P_2}$}; 
\node (30) at (4, 0) {$\Gamma_{P_i}$.}; 
\draw[{Hooks[right]}->] (03) to node {$\iota$}(33);
\draw[->>] (03) to node {} (30);
\draw[-] (30) to node {$\pi_i$} (03);
\draw[->>] (33) to node {$\operatorname{pr}_i$} (30);
\end{tikzpicture}
\end{equation}
\end{Lemma}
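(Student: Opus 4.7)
The plan is to unpack Papanikolas' identification $\Gamma_P\cong \Gamma_\Psi$ concretely, and then check that both morphisms $\pi_i$ and $\operatorname{pr}_i\circ\iota$ encode the same action on the subspace $P_i^{\mathrm{B}}\subset (P_1\oplus P_2)^{\mathrm{B}}$. A shortcut is simply to invoke \cite[Example 2.3]{Mishiba2015}, but I outline the underlying reasoning.

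First, I unpack the embedding $\Gamma_P\hookrightarrow \operatorname{GL}_r$ supplied by Theorem \ref{ThmPapanikolas}: it realizes $\Gamma_P$ as the tautological representation on its Betti realization $P^{\mathrm{B}}$, once the latter is equipped with the $\mathbb{F}_q(t)$-basis coming from (the columns of) $\Psi^{-1}$. Since the Betti functor is additive, $(P_1\oplus P_2)^{\mathrm{B}}= P_1^{\mathrm{B}}\oplus P_2^{\mathrm{B}}$; the rigid analytic trivialization $\Psi_1\oplus \Psi_2$ of $\Phi_1\oplus \Phi_2$ yields the corresponding block-diagonal basis, and so the embedding $\iota$ of $\Gamma_{P_1\oplus P_2}$ automatically lands inside $\operatorname{GL}_{r_1}\times \operatorname{GL}_{r_2}$.

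Second, I note that $P_1^{\mathrm{B}}$ and $P_2^{\mathrm{B}}$ are sub-representations of $\Gamma_{P_1\oplus P_2}$, because each $P_i$ is a sub-object of $P_1\oplus P_2$ in the Tannakian category $\mathcal{R}$ and the Betti fiber functor is exact. Consequently every $g\in \Gamma_{P_1\oplus P_2}(R)$ is genuinely block-diagonal under $\iota$, and $\operatorname{pr}_i(\iota(g))$ is by construction the operator induced by $g$ on $P_i^{\mathrm{B}}$.

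Third, by Tannakian duality the morphism $\pi_i$ is dual to the inclusion $\langle P_i\rangle \hookrightarrow \langle P_1\oplus P_2\rangle$ of Tannakian subcategories: interpreting $g\in \Gamma_{P_1\oplus P_2}(R)$ as a tensor-compatible natural automorphism of the fiber functor on $\langle P_1\oplus P_2\rangle$, the element $\pi_i(g)$ is the restriction of this natural transformation to $\langle P_i\rangle$, whose component at the generator $P_i$ itself is precisely the action of $g$ on $P_i^{\mathrm{B}}$. Hence $\pi_i$ and $\operatorname{pr}_i\circ \iota$ produce the same operator on $P_i^{\mathrm{B}}$ for every $g$, and the diagram commutes. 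I do not anticipate a substantive obstacle; the only point that requires care is to keep straight the identification between the ambient $\operatorname{GL}_r$ in Papanikolas' construction and the automorphism group of $P^{\mathrm{B}}$ in the chosen basis, so that the coordinate projection on block-diagonal matrices really implements the restriction of fiber functor automorphisms to a sub-object.
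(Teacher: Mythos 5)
Your proposal is correct and runs along essentially the same lines as the paper's own proof: both identify $\pi_i$, via Tannakian duality, with restriction of fiber-functor automorphisms to the subcategory $\langle P_i\rangle$, and both identify $\operatorname{pr}_i\circ\iota$ with the block of the matrix acting on $P_i^{\mathrm{B}}\subset (P_1\oplus P_2)^{\mathrm{B}}$ in the basis coming from $\Psi_1\oplus\Psi_2$, then compare the two on the generator $P_i^{\mathrm{B}}$. The paper phrases the comparison in terms of the two induced tensor functors $\operatorname{Rep}(\Gamma_{P_i})\to\operatorname{Rep}(\Gamma_{P_1\oplus P_2})$, while you compare the two maps directly on points, but the substance is the same.
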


\begin{proof}
Although experts well know this lemma, we will perform a short proof here to make the present paper self-contained.
Recall that for each $\mathbb{F}_q(t)$-algebra $R$, the $\Gamma_{P_1\oplus P_2}(R)$-action on the Betti realization $R\otimes_{\mathbb{F}_q(t)}(P_1\oplus P_2)^B$ which comes from the equivalence $\langle {P_1\oplus P_2} \rangle \simeq \operatorname{Rep}_{\mathbb{F}_q(t)}(\Gamma_{P_1\oplus P_2})$ in Papanikolas theory is given by
\begin{equation}
    \Psi_{P_1\oplus P_2}^{-1}\mathbf{p}\rightarrow (\Psi_{P_1\oplus P_2}\gamma)^{-1}\mathbf{p},\quad \gamma \in  \Gamma_{P_1\oplus P_2}(R)
\end{equation}
where $\mathbf{p}$ is the $\overline{K}(t)$-basis of $P_1\oplus P_2$ corresponding to the defining matrix $\Phi_1 \oplus \Phi_2$ and the action on $\Gamma_{P_i}(R)$-action on $R\otimes_{\mathbb{F}_q(t)}P_i^B$ is given by the similar way (\cite[Theorem 4.5.3]{Papanikolas2008}). 


We compare the two tensor functors from $\operatorname{Rep}_{\mathbb{F}_q(t)}(\Gamma_{P_i})$ to $\operatorname{Rep}_{\mathbb{F}_q(t)}(\Gamma_{P_i\oplus P_2})$, respectively coming from $\pi_i $ and ${\rm pr}_i \circ\iota$. As the former category is generated by $P_i^B$, it is enough to consider the $\Gamma_{P_1\oplus P_2}$ actions on $P_i^B$ given by these functors.
The one coming from $\pi_i$ is the sub-representation of the $\Gamma_{P_1\oplus P_2}(R)$-action given above.
That coming from ${\rm pr}_i \circ\iota$ is that induced by the action $\Gamma_{P_i} \curvearrowright R\otimes_{\mathbb{F}_q(t)}P_i^B$ via the surjection ${\rm pr}_i \circ\iota:\Gamma_{P_1\oplus P_2}\rightarrow \Gamma_{P_i}$. 
We can see that these two are the same and hence the lemma follows. 
\end{proof}

Let us take a rigid analytic trivial pre-$t$-motive $P$ defined by a matrix $\Phi$ and recall the definition of $s$-th derived pre-$t$-motive $P^{(s)}$ in Definition \ref{Defderived} for $s \geq 1$. 
It was mentioned that matrix $\Psi$ is also a rigid analytic trivialization of $\Phi^\prime:=\Phi^{(-sl+l)}\Phi^{(-sl+2l)}\cdots\Phi^{(-l)}\Phi$, which defines the $s$-th derived pre-$t$-motive $P^{(s)}$ of $P$. 
The theorem of Papanikolas (Theorem \ref{ThmPapanikolas}) yields the following equalities of algebraic groups over $\overline{\mathbb{F}_{q^s}(t)}$ for any $s \geq 1$ and pre-$t$-motive $P$ whose defining matrix has rigid analytic trivialization $\Psi$:
\begin{equation} \label{derivedsameGaloisgroup}
    \Gamma_{P}=\Gamma_\Psi=\Gamma_{P^{(s)}}.
\end{equation}

\begin{example}
Let $l_1,\,\dots,\,l_r$ be distinct positive integers and $R$ be their common multiple. For each $1\leq i \leq r$, the $R/l_i$-th derived pre-$t$-motive $C_{l_i}^{(R/l_i)}$ of the Carlitz module motive $C_{l_i}$ of level $l_i$ is of level $R$ and represented by the matrix
\begin{align}
    &\left((t-\theta^{\left(-\frac{R}{l_i}l_i+l_i\right)})(t-\theta^{\left(-\frac{R}{l_i}l_i+2l_i\right)})\cdots(t-\theta^{(-l_i)})(t-\theta)\right)\\
    =&\left((t-\theta^{\left(-R+l_i\right)})(t-\theta^{\left(-R+2l_i\right)})\cdots(t-\theta^{(-l_i)})(t-\theta)\right)
\end{align}
of size $1$, which has a rigid analytical trivialization $(\Omega_{l_i})$. Hence the direct product $C_{l_1}^{(R/l_1)}\oplus \cdots \oplus C_{l_r}^{(R/l_r)}$ is of level $R$, $r$-dimensional over $\overline{K}(t)$, and represented by the diagonal matrix
\begin{equation}
    \Big((t-\theta^{\left(-R+l_1\right)})(t-\theta^{\left(-R+2l_1\right)})\cdots(t-\theta)\Big)\oplus \cdots \oplus \Big( (t-\theta^{\left(-R+l_r\right)})\cdots(t-\theta) \Big),
\end{equation}
which has a rigid analytical trivialization $\Psi:=(\Omega_{l_1})\oplus \cdots \oplus(\Omega_{l_r})$. As 
$
\Gamma_{\Psi} \simeq \Gamma_{C_{l_1}^{(R/l_1)}\oplus \cdots \oplus C_{l_r}^{(R/l_r)}}
$
is the smallest closed subscheme of $\operatorname{GL}_{r\,/\mathbb{F}_{p^R}(t)}$ such that 
$
   \widetilde{\Psi}:=(\Omega_{l_1}^{-1}\otimes\Omega_{l_1})\oplus \cdots \oplus(\Omega_{l_r}^{-1}\otimes\Omega_{l_r}) \in \Gamma_{\Psi}(\mathbb{L}\otimes_{\overline{K}(t)}\mathbb{L}),
$
this is a closed subgroup of
\begin{equation}
    \left\{\begin{pmatrix}
        a_1&0&0\\
        0&\ddots&0\\
        0&0&a_r        
    \end{pmatrix}\,\middle| \,a_1,\,\cdots,\,a_r \neq 0 \right\}\simeq \mathbb{G}_m^{r}
\end{equation}
of invertible diagonal matrices. Lemma \ref{LemmaTannakianProj} shows that the surjection $\Gamma_{C_{l_1}^{R/l_1}\oplus \cdots \oplus C_{l_r}^{R/l_r}} \twoheadrightarrow \Gamma_{C_{l_i}^{(R/l_i)}}\simeq \mathbb{G}_m$ given by the Tannakian duality coincides with the restriction of the $i$-th projection $\mathbb{G}_m^r\twoheadrightarrow \mathbb{G}_m$ to $\Gamma_{C_{l_1}^{R/l_1}\oplus \cdots \oplus C_{l_r}^{R/l_r}} \subset \mathbb{G}_m^r$ for each $i$.
\end{example}
Since Denis~(\cite{Denis1998}) proved that 
\begin{equation}
    \Omega_{l_1}^{-1}|_{t=\theta}=\tilde{\pi}_{l_1},\,\dots,\,\Omega_{l_r}^{-1}|_{t=\theta}=\tilde{\pi}_{l_r}
\end{equation}
are algebraically independent over $\overline{K}$ (see also \cite[Lemma 4.2.1]{Chang2011}) ,  we have
\begin{equation}
    \dim \Gamma_{C_{l_1}^{R/l_1}\oplus \cdots \oplus C_{l_r}^{R/l_r}}=\operatorname{tr.deg}_{\overline{K}}\overline{K}(\tilde{\pi}_{l_1},\,\dots,\,\tilde{\pi}_{l_r})=r
\end{equation}
so we have 
\begin{equation}\label{CarlitzOnly}
    \Gamma_{C_{l_1}^{R/l_1}\oplus \cdots \oplus C_{l_r}^{R/l_r}}=\mathbb{G}_m^r. 
\end{equation}

 \subsection{Carlitz multiple polylogarithms and Anderson-Thakur series}\label{subsectionATseries}
We recall the notion of $t$-motivic Carlitz multiple polylogarithms, Anderson-Thakur polynomials, and Anderson-Thakur series, which play important roles in the period interpretations of MZV's.

For an index $\mathbf{s}=(s_1,\,\dots,\,s_d) \in \mathbb{Z}_{\geq 1}^d$ and $\mathbf{u}=(u_1,\,\dots,\,u_d) \in \mathbb{C}_\infty[t]^d$, we define \textit{$t$-motivic Carlitz multiple polylogarithm} (\cite{Chang2014}) as follows:
\begin{equation}
    \mathcal{L}_{l,\,\mathbf{u},\mathbf{s}}(t):=
    \sum_{i_1>\cdots>i_d\geq0}\frac{u_1^{(i_1l)}\cdots u_d^{(i_dl)}}{((t-\theta)^{(l)}\cdots(t-\theta)^{(i_1l)})^{s_1}\cdots ((t-\theta)^{(l)}\cdots(t-\theta)^{(i_rl)})^{s_r}}
\end{equation}
Note that it satisfies the following equation:
\begin{equation}
    \mathcal{L}_{l,\,\mathbf{u},\mathbf{s}}^{(-l)}=\frac{u_d^{(-l)}\mathcal{L}_{l,\,(u_1,\,\dots,\,u_{d-1}),\,(s_1,\,\dots,\,s_{d-1})}}{(t-\theta)^{s_1+\cdots+s_{d-1}}}  \label{FrobEqPolylog} 
      +\frac{\mathcal{L}_{l,\,\mathbf{u},\mathbf{s}}}{(t-\theta)^{s_1+\cdots+s_{d}}}.
\end{equation}
In the case that $u_1,\,\dots,u_d \in \mathbb{C}_\infty$, the value at $t=\theta$ is equal to the value
\begin{equation}
    \operatorname{Li}_{l,\, \mathbf{s}}(\mathbf{u})
    :=\sum_{i_1>\cdots>i_d\geq0}\frac{u_1^{(i_1l)}\cdots u_d^{(i_dl)}}{((\theta-\theta^{(l)})\cdots(\theta-\theta^{(i_1l)}))^{s_1}\cdots ((\theta-\theta^{(l)})\cdots(\theta-\theta^{(i_rl)}))^{s_r}}.
\end{equation}
of \textit{Carlitz multiple polylogarithm} if the series in right hand side converges (\cite{Chang2014}).
For a polynomial
\begin{equation}
    u=\sum_{i=0}^m a_i t^i \in \overline{K}[t],
\end{equation}
we put $||u||_\infty:=\max_{i}(|a_i|_\infty)$. 
If we have inequalities
\begin{equation}||u_{i}||_\infty<| \theta|_\infty^{\frac{s_i
q}{q-1}}
\end{equation}
for each $1 \geq i \geq d$, then we have $\mathcal{L}_{l,\,\mathbf{u},\mathbf{s}}\in \mathbb{T}$ (\cite{Chang2014}).

Anderson and Thakur~(\cite{Anderson1990}) introduced a sequence $\mathcal{H}_{l,\,0},\,\mathcal{H}_{l,\,1},\,\dots \in A_l[t]$ of polynimials, which are called \textit{Anderson-Thakur polynomials} by the following generating series:
\begin{equation}
  \left(1-\sum_{i\geq0}\frac{\prod_{j=1}^i (t^{q^i}-\theta^{q^j})}{\prod_{j=0}^{i-1}(t^{q^i}-t^{q^j})}x^{q^i} \right)^{-1}=\sum_{s\geq0} \frac{\mathcal{H}_{l,\,s}(t)}{\Gamma_{s+1|_{\theta=t}}}x^{q^s}.
\end{equation}
Here $\Gamma_{s+1}$ is the \textit{Carlitz factorial} defined as follows: for non-negative integer $s$ with the $q$-adic digit expansion
\begin{equation}
    s=\sum_{i=0}^m s_{(i)}q^i, \quad (0 \leq s_{(i)} \leq q-1)
\end{equation}
we put 
\begin{equation}
    \Gamma_{s+1}:=\prod_{i=0}^m D_i^{s_{(i)}}\in A
\end{equation}
where $D_i$ is the product of all monic polynomial in $A$ of degree $i$, see \cite{ThakurBook} for details. Anderson-Thakur polynomials enable us to interpret MZV's in terms of special values of Carlitz multiple polylogarithms  as follows:
\begin{equation}
   \mathcal{L}_{l,\,(\mathcal{H}_{l,\,s_1-1},\,\dots,\,\mathcal{H}_{l,\,s_r-1}),\,(s_1,\,\dots,\,s_d)}(t)|_{t=\theta}=\Gamma_{s_1}\cdots\Gamma_{s_r}\zeta_{l}(s_1,\,\dots,\,s_d), \label{ATpolynom}
\end{equation}
(see \cite[Theorem 3.8.3]{Anderson1990}, \cite{Chang2009}, \cite{Chang2014}). For an index $\mathbf{s}=(s_1,\,\dots,\,s_d) \in \mathbb{Z}_{\geq 1}^d$, the \textit{Anderson-Thakur series} $\zeta^{\textrm{AT}}_{A_l}(s_1,\,\dots,\,s_r)$ is defined to be the series
\begin{equation}
    \mathcal{L}_{l,\,(\mathcal{H}_{l,\,s_1-1},\,\dots,\,\mathcal{H}_{l,\,s_r-1}),\,(s_1,\,\dots,\,s_d)}(t)\in \mathbb{C}_{\infty}((t)).
\end{equation}

Let us recall the period interpretations of multiple zeta values and {\color{black} special values} of Carlitz multiple polylogarithms {\color{black}at algebraic points} (\cite{Anderson2009} and \cite{Chang2014}). We take $\mathbf{s}=(s_1,\,\dots,\,s_r) \in \mathbb{Z}_{\geq 1}^d$ and $\mathbf{u}=(u_1,\,\dots,\,u_d) \in \overline{K}(t)^d$, and consider the pre-$t$-motive $M_l[\mathbf{u};\mathbf{s}]$ defined by
\begin{equation}
\Phi_l[\mathbf{u};\mathbf{s}] :=\begin{pmatrix}
   (t-\theta)^{s_{1}+\cdots+s_{d}} & 0& \cdots && \\
   (t-\theta)^{s_{1}+\cdots+s_{d}} u_{1}^{(-1)} & (t-\theta)^{s_{2}+\cdots+s_{d}}&0&\cdots&\\
   &\ddots&\ddots&\ddots& \\
   & &&(t-\theta)^{s_{d}}& 0\\
  & & &(t-\theta)^{s_{d}} u_{d}^{(-1)} &1
\end{pmatrix}.\label{PhiUS}
\end{equation}
As we have equation \eqref{FrobEqPolylog}, this representing matrix has rigid analytic trivialization 
\begin{align}
& \quad \ \ \Psi_l[\mathbf{u};\mathbf{s}]\label{PsiUS}
&:=\begin{pmatrix}
   \Omega^{s_{1}+\cdots+s_{d}} & 0& \cdots && \\
   \Omega^{s_{1}+\cdots+s_{d}}  L_{2,\,1} &  \Omega^{s_{2}+\cdots+s_{d}}&0&\cdots&\\
   \vdots&&\ddots&\ddots& \\
  \Omega^{s_{1}+\cdots+s_{d}}  L_{d,\,1}& &&\Omega^{s_{d}} & 0\\
  \Omega^{s_{1}+\cdots+s_{d}}  L_{d+1,\,1}&   \Omega^{s_{2}+\cdots+s_{d}} L_{d+1,\,2}&\cdots& \Omega^{s_{d}}  L_{d+1,\,d}&1
\end{pmatrix}
\end{align}
where each $L_{j,\,i}$ denotes the series $\mathcal{L}_{l,\,(u_i,\,u_{i+1},\,\dots,\,u_{j-1}),\,(s_i,\,s_{i+1},\,\dots,\,s_{j-1})}$ for $1\leq i< j\leq d+1$ and $\Omega_l$ is the series introduced in Example \ref{Omega}. Putting $u_i=\mathcal{H}_{l,\,s_i-1}$ for $1 \leq i \leq d$, we can write MZV $\zeta_{l}(s_1,\,\dots,\,s_d)$ in terms of periods of a pre-$t$-motive. 

For later use, we mentioned that the $(i,\,j)$-component of the matrix $\widetilde{\Psi[\mathbf{u};\mathbf{s}]}$ is given by
    \begin{align}
       &(\Omega^{-1}\otimes \Omega)^{s_i+\cdots+s_d}\\ &\cdot\sum_{n=j}^{i}\sum_{m=0}^{i-n}(-1)^m \sum_{\substack{n=k_0<k_1<\cdots\\
    \cdots<k_{m-1}<k_m=i}}L_{k_1,\,k_0}L_{k_2,\,k_1}\cdots L_{k_m,\,k_m-1}\otimes \Omega^{s_j+\cdots+s_{i-1}}L_{n,\,j}
    \end{align}
for $1 \leq j \leq i \leq d+1$, and $(i,\,i)$-component is given by $(\Omega^{-1}\otimes \Omega)^{s_i+\cdots+s_d}$ for $1 \leq i \leq d+1$ (see \cite{Mishiba2015}). Here, we put $L_{i,\,i}=1$ for $1 \leq i \leq d+1$ by convention. 
\section{Algebraic independence of MZV's with varying constant fields}\label{SectionGaloisGroupCalculation}

This section is devoted to a proof of Theorem \ref{main}, which says that no algebraic relations exist among MZV's with different constant fields. 
The key of the proof is Equation \eqref{mainthmeq} of algebraic groups, by which we can deduce the algebraic independence in Theorem \ref{main} using Papanikolas' theorem (Theorem \ref{ThmPapanikolas}). The one-side inclusion is given in Lemma \ref{Lemmainclusion}, and the proof of equality occupies \S\S \ref{sscal}.
For an index $\mathbf{s}=(s_1,\,s_2,\,\dots$, $s_d)\in \mathbb{Z}_{\geq 1}^d$, we define $\operatorname{dep}(\mathbf{s}):=d$, $\operatorname{wt}(\mathbf{s}):=s_1+\cdots+s_d$, and
\begin{equation}
\operatorname{Sub}(\mathbf{s}):= \{(s_i,\,s_{i+1},\,\dots,\,s_j) \mid 1\leq i \leq j \leq d \}.
\end{equation}
We take a finite set $I$ of indices and assume that the following equation holds:
\begin{equation}\label{subclosed}
    I=\bigcup_{\mathbf{s}\in I}\operatorname{Sub}(\mathbf{s}).
\end{equation}
Note that for any finite set $I$ of indices, there exists a finite set $\hat{I}$ which contains $I$ and satisfies the equation above.
We enumerate the set $I$ as $I=\{ \mathbf{s}_1,\,\dots,\,\mathbf{s}_{\# I} \}$ so that $\operatorname{dep}\mathbf{s}_{j^\prime} \leq \operatorname{dep}\mathbf{s}_{j}$ for any $1 \leq j^\prime < j \leq \#I$. If we take $1 \leq j \leq \# I$ and $\mathbf{s} \in \operatorname{Sub}(\mathbf{s}_j)$, then it holds that $\mathbf{s}=\mathbf{s}_{j^\prime}$ for some $j^\prime \leq j$. We take distinct positive integers $l_1,\,\dots,\,l_r$, and let $R$ be a common multiple of them. 

For our convenience, we fix an element $u_{i,\,s}$ of $\overline{K}[t]$ for each
$1\leq i \leq r$ and $s \geq 1$. We simply write 
\begin{align}
    
\mathcal{L}_{l_i,\,(s_1,\,\dots,\,s_{d^\prime})}&:=\mathcal{L}_{l_i,\,(u_{i,\,s_1},\,\dots,\,u_{i,\,s_{d^\prime}}),\,(s_1,\,\dots,\,s_{d^\prime})}, \\M_{l_i}[(s_1,\,\dots,\,s_{d^\prime})]&:=M_{l_i}[(u_{i,\,s_1},\,\dots,\,u_{i,\,s_{d^\prime}}),\,(s_1,\,\dots,\,s_{d^\prime})],\\\Phi_{l_i}[(s_1,\,\dots,\,s_{d^\prime})]&:=\Phi_{l_i}[(u_{i,\,s_{1}},\,\dots,u_{i,\,s_{d^\prime}}), (s_1,\,\dots,\,s_{d^\prime})], \text{ and} \\\Psi_{l_i}[(s_1,\,\dots,\,s_{d^\prime})]&:=\Psi_{l_i}[(u_{i,\,s_{1}},\,\dots,u_{i,\,s_{d^\prime}}),\,(s_1,\,\dots,\,s_{d^\prime})] 
\end{align}
for each $s_1,\,\dots,\,s_{d^\prime} \geq 1$ and $1 \leq i \leq r$.
For convergence, we suppose 
\begin{equation}\label{convergencecondition}
    ||u_{i,\,s}||_\infty<|\theta|_\infty^{\frac{s p^{l_i}}{p^{l_i}-1}}
\end{equation}
for all $1\leq i\leq r$ and $s \geq 1$.

\subsection{A simple example}

To help readers to follow the calculation for the general cases, we treat with a special case.
Let us take positive integers $m$ and $n$,
and consider the set $I:=\{ \mathbf{s}_1=(m),\,\mathbf{s}_2:=(n),\,\mathbf{s}_3:=(m,\,n)\}$ of indices.
We take up the case where $r=2$ and take distinct positive integers $l_1$ and $l_2$. Let $R$ be given their common multiple.
We put $u_{i,\,m}=H_{i,m-1}$ and $u_{i,\,n}=H_{i,n-1}$ for $i=1,\,2$ to focus on the MZV's instead of general values of Carlitz polylogarithms.

For $i=1,\,2$ and $j=0,\,1,\,2,\,3$, we write $\mathbf{M}(i,\,j)$ for the pre-$t$-motive $C_{l_i}\oplus \bigoplus_{1\leq j^\prime \leq j}M_{l_i}[\mathbf{s}_{j^\prime}]$. For example, $\mathbf{M}(1,\,0)$ is equal to $C_{l_1}$ and $\mathbf{M}(i,\,3)$ is the pre-$t$-motive of level $l_i$ defined by the matrix
\begin{align}\label{examplematrixPhi}
    \Phi(i,\,3):=(t-\theta)&\oplus \begin{pmatrix}
        (t-\theta)^{m}&\\
        (t-\theta)^{m} H_{l_i,\,m-1}^{(-l_i)}&1
    \end{pmatrix}\oplus \begin{pmatrix}
        (t-\theta)^{n}&\\
        (t-\theta)^{n} H_{l_i,\,n-1}^{(-l_i)}&1
    \end{pmatrix}\\
    &   \quad \quad\ \oplus 
    \begin{pmatrix}
        (t-\theta)^{m+n}&&\\
        (t-\theta)^{m+n} H_{l_i,\,m-1}^{(-l_i)}&(t-\theta)^{m}&\\
        &(t-\theta)^{m} H_{l_i,\,m-1}^{(-l_i)}&1
    \end{pmatrix},
\end{align}
which has a rigid analytic trivialization $\Psi(i,\,3)$ given by
\begin{equation}\label{examplematrixPsi}
        (\Omega_{l_i})\oplus 
\begin{pmatrix}
\Omega_{l_i}^{m}&0\\
\Omega_{l_i}^{m} \zeta_{l_i}^{\mathrm{AT}}(m)&1
\end{pmatrix}
\oplus
\begin{pmatrix}
\Omega_{l_i}^{n}&0\\
\Omega_{l_i}^{n} \zeta_{l_i}^{\mathrm{AT}}(n)&1
\end{pmatrix}
 \oplus
\begin{pmatrix}
\Omega_{l_i}^{m+n}&0&0\\
\Omega_{l_i}^{m+n} \zeta_{l_i}^{\mathrm{AT}}(m)&\Omega_{l_i}^{n}&0\\
\Omega_{l_i}^{m+n} \zeta_{l_i}(m,\,n) &\Omega_{l_i}^{n} \zeta_{l_i}^{\mathrm{AT}}(n)&1
\end{pmatrix}.
\end{equation}

    
\begin{example}\label{Example1}

Let us obtain the algebraic independence
\begin{equation}
\trdeg_{\overline{K}}\overline{K}\big(\tilde{\pi}_{l_1},\,\zeta_{l_1}(m),\,\zeta_{l_1}(n),\,\zeta_{l_1}(m,\,n),\,\tilde{\pi}_{l_2},\,\zeta_{l_2}(m),\,\zeta_{l_2}(n),\,\zeta_{l_2}(m,\,n)\big)=8
\end{equation}
with assuming the following algebraic independence:
\begin{align}
&\quad \trdeg_{\overline{K}}\overline{K}\big(\tilde{\pi}_{l_1},\,\zeta_{l_1}(m),\,\zeta_{l_1}(n),\,\zeta_{l_1}(m,\,n),\,\tilde{\pi}_{l_2},\,\zeta_{l_2}(m),\,\zeta_{l_2}(n)\big)\\
&=\trdeg_{\overline{K}}\overline{K}\big(\tilde{\pi}_{l_1},\,\zeta_{l_1}(m),\,\zeta_{l_1}(n),\,\tilde{\pi}_{l_2},\,\zeta_{l_2}(m),\,\zeta_{l_2}(n),\,\zeta_{l_2}(m,\,n)\big)
=7.
\end{align}

Using Papanikolas ' theory, we first interpret the desired algebraic independence in terms of $t$-motivic Galois groups.
Using the notion of derived pre-$t$-motives (Definition \ref{Defderived}), we let $M$ be the pre-$t$-motive $\mathbf{M}(1,\,3)^{(R/l_1)} \oplus \mathbf{M}(2,\,3)^{(R/l_2)}$ of level $R$, whose defining matrix has rigid analytic trivialization $\Psi=\Psi(1,\,3)\oplus \Psi(2,\,3)$.
Theorem \ref{ThmPapanikolas} gives us
\begin{align}
\dim \Gamma_{M}=\dim \Gamma_{\Psi}
&=\trdeg_{\overline{K}}\overline{K}\big(\tilde{\pi}_{l_i},\,\zeta_{l_i}(m),\,\zeta_{l_i}(n),\,\zeta_{l_i}(m,\,n)\mid i=1,\,2\big)
\end{align}
and hence it is enough to show that $\dim \Gamma_{\Psi}=8$.
If we consider the algebraic group $G$ defined by
\begin{align}
\left\{ \bigoplus_{i=1,\,2}
\begin{pmatrix} (a_i)\oplus 
\begin{pmatrix}
a_i^{m}&0\\
a_i^{m} x_i&1
\end{pmatrix}
\oplus
\begin{pmatrix}
a_i^{n}&0\\
a_i^n y_i&1
\end{pmatrix}\\
\quad \ 
\oplus
\begin{pmatrix}
a_i^{m+n}&0&0\\
a_i^{m+n} x_i&a_i^{n}&0\\
a_i^{m+n} z_i &a_i^{n}y_i&1
\end{pmatrix}
\end{pmatrix}
\, \middle| \,
a_i\neq 0 \text{ for } i=1,\,2
\right\}
\subset \operatorname{GL}_{16/\overline{\mathbb{F}_q(t)}},
\end{align}
then we have $\widetilde{\Psi} \in G(\mathbb{L} \otimes_{\overline{K}(t)} \mathbb{L})$, see Subsection \ref{subsectionPapanikolas}, and the closed immersion $\iota:\Gamma_{\Psi} \hookrightarrow G $ exists since $\Gamma_{\Psi} $ is characterized to be the smallest closed subscheme of $\operatorname{GL}_{16/\overline{\mathbb{F}_q(t)}}$ such that $\widetilde{\Psi} \in \Gamma_{\Psi}(\mathbb{L} \otimes_{\overline{K}(t)} \mathbb{L})$. 

We next define $M^\prime$ to be the pre-$t$-motive $\mathbf{M}(1,\,3)^{(R/l_1)} \oplus \mathbf{M}(2,\,2)^{(R/l_2)}$ of level $R$, whose defining matrix has a rigid analytic trivialization
\begin{align}
\Psi^\prime:=\begin{pmatrix} (\Omega_{l_1})\oplus 
\begin{pmatrix}
\Omega_{l_1}^{m}&0\\
\Omega_{l_1}^{m} \zeta_{l_1}^{\mathrm{AT}}(m)&1
\end{pmatrix}
\oplus
\begin{pmatrix}
\Omega_{l_1}^{n}&0\\
\Omega_{l_1}^{n} \zeta_{l_1}^{\mathrm{AT}}(n)&1
\end{pmatrix}\\
\quad \quad \ \oplus
\begin{pmatrix}
\Omega_{l_1}^{m+n}&0&0\\
\Omega_{l_1}^{m+n} \zeta_{l_1}^{\mathrm{AT}}(m)&\Omega_{l_1}^{n}&0\\
\Omega_{l_1}^{m+n} \zeta_{l_1}{\mathrm{AT}}(m,\,n) &\Omega_{l_1}^{n} \zeta_{l_1}^{\mathrm{AT}}(n)&1
\end{pmatrix}
\end{pmatrix}\\
\oplus
\begin{pmatrix} (\Omega_{l_2})\oplus 
\begin{pmatrix}
\Omega_{l_2}^{m}&0\\
\Omega_{l_2}^{m} \zeta_{l_2}^{\mathrm{AT}}(m)&1
\end{pmatrix}
\oplus
\begin{pmatrix}
\Omega_{l_2}^{n}&0\\
\Omega_{l_2}^{n} \zeta_{l_2}^{\mathrm{AT}}(n)&1
\end{pmatrix}
\end{pmatrix} 
\end{align}
in $\operatorname{GL}_{13/\overline{\mathbb{F}_q(t)}}(\mathbb{L})$.
By Theorem \ref{ThmPapanikolas}, the dimension of $\Gamma_{M^\prime}=\Gamma_{\Psi^\prime}$ is equal to
\begin{align}
\trdeg_{\overline{K}}\overline{K}\big(\tilde{\pi}_{l_1},\,\zeta_{l_1}(m),\,\zeta_{l_1}(n),\,\zeta_{l_1}(m,\,n),\,\tilde{\pi}_{l_2},\,\zeta_{l_2}(m),\,\zeta_{l_2}(n)\big)=7.
\end{align}

We have $\widetilde{\Psi}^\prime \in G^\prime(\mathbb{L} \otimes_{\overline{K}(t)} \mathbb{L})$ if we put
$G^\prime$ to be the algebraic subgroup of $\mathop{GL}_{13/\overline{\mathbb{F}_q(t)}}$ consists of matrices of the form
\begin{equation}
\begin{pmatrix} (a_1)\oplus 
\begin{pmatrix}
a_1^{m}&0\\
a_1^{m}x_1&1
\end{pmatrix}
\oplus
\begin{pmatrix}
a_1^{n}&0\\
a_1^{n}y_1&1
\end{pmatrix}\\
\quad \ \ \oplus
\begin{pmatrix}
a_1^{m+n}&0&0\\
a_1^{m+n}x_1&a_1^{n}&0\\
a_1^{m+n}z_1 &a_1^{n} y_1&1
\end{pmatrix}
\end{pmatrix}
\oplus
\begin{pmatrix} (a_2)\oplus 
\begin{pmatrix}
a_2^{m}&0\\
a_2^{m} x_2&1
\end{pmatrix}
\oplus
\begin{pmatrix}
a_2^{n}&0\\
a_2^{n} y_2&1
\end{pmatrix}
\end{pmatrix}
\end{equation}
Hence we have an inclusion $\Gamma_{\Psi^\prime} \subset G^\prime$ as $\Gamma_{\Psi^\prime}$ is characterized to be the smallest sub-scheme of $\mathop{GL}_{13/\overline{\mathbb{F}_q(t)}}$ which has $\widetilde{\Psi}^\prime$ as its $\mathbb{L} \otimes_{\overline{K}(t)} \mathbb{L}$-valued point.
Consequently, we have $\Gamma_{\Psi^\prime} = G^\prime$ since we have assumed that $\dim \Gamma_{\Psi^\prime}=7$ and $G^\prime$ is smooth and connected.

Similarly, we put $M^{\prime \prime}:=\mathbf{M}(1,\,2)^{(R/l_1)} \oplus \mathbf{M}(2,\,2)^{(R/l_2)}$ with the matrix
\begin{equation}
\Psi^{\prime \prime}:=
\bigoplus_{i=1,\,2}
\left( (\Omega_{l_i})\oplus 
\begin{pmatrix}
\Omega_{l_i}^{m}&0\\
\Omega_{l_i}^{m} \zeta_{l_i}^{\mathrm{AT}}(m)&1
\end{pmatrix}
\oplus
\begin{pmatrix}
\Omega_{l_i}^{n}&0\\
\Omega_{l_i}^{n} \zeta_{l_i}^{\mathrm{AT}}(n)&1
\end{pmatrix}
\right)\in \operatorname{GL}_{10/\overline{\mathbb{F}_q(t)}}(\mathbb{L})
\end{equation}
as a rigid analytic trivialization of its defining matrix.
By the similar argument to the proof of $\Gamma_{\Psi^\prime} = G^\prime$, we have $\Gamma_{M^{\prime \prime}}=\Gamma_{\Psi^{\prime \prime}} = G^{\prime \prime}$ where
\begin{align}
G^{\prime \prime}(\overline{\mathbb{F}_q(t)})&=
\left\{ \bigoplus_{i=1,\,2}
\left( (a_i)\oplus 
\begin{pmatrix}
a_i^{m}&0\\
a_i^{m} x_i&1
\end{pmatrix}
\oplus
\begin{pmatrix}
a_i^{n}&0\\
a_i^n y_i&1
\end{pmatrix}
\right)
\, \middle| \,
a_i
\neq 0 \text{ for } i=1,\,2
\right\}
\\
& \subset \operatorname{GL}_{10/\overline{\mathbb{F}_q(t)}}
\end{align}
as we have $\trdeg_{\overline{K}}\overline{K}\big(\tilde{\pi}_{l_1},\,\zeta_{l_1}(m),\,\zeta_{l_1}(n),\,\tilde{\pi}_{l_2},\,\zeta_{l_2}(m),\,\zeta_{l_2}(n)\big)
=6.$

Consider the surjections $\overline{\varphi}:G \rightarrow  \Gamma_{\Psi^\prime}=G^\prime$ and $\phi:\Gamma_{\Psi^\prime}\twoheadrightarrow \Gamma_{\Psi^{\prime \prime}}=G^{\prime \prime}$ given by
\begin{align}
&\begin{pmatrix} (a_1)\oplus 
\begin{pmatrix}
a_1^{m}&0\\
a_1^{m} x_1&1
\end{pmatrix}
\oplus
\begin{pmatrix}
a_1^{n}&0\\
a_1^n y_1&1
\end{pmatrix}\\
\quad \ 
\oplus
\begin{pmatrix}
a_1^{m+n}&0&0\\
a_1^{m+n} x_1&a_1^{n}&0\\
a_1^{m+n} z_1 &a_1^{n}y_1&1
\end{pmatrix}
\end{pmatrix}\oplus
\begin{pmatrix} (a_2)\oplus 
\begin{pmatrix}
a_2^{m}&0\\
a_2^{m} x_2&1
\end{pmatrix}
\oplus
\begin{pmatrix}
a_2^{n}&0\\
a_2^n y_2&1
\end{pmatrix}\\
\quad \ 
\oplus
\begin{pmatrix}
a_2^{m+n}&0&0\\
a_2^{m+n} x_2&a_2^{n}&0\\
a_2^{m+n} z_2 &a_2^{n}y_2&1
\end{pmatrix}
\end{pmatrix}\\
&\overset{\overline{\varphi}}{\mapsto}
\begin{pmatrix} (a_1)\oplus 
\begin{pmatrix}
a_1^{m}&0\\
a_1^{m}x_1&1
\end{pmatrix}
\oplus
\begin{pmatrix}
a_1^{n}&0\\
a_1^{n}y_1&1
\end{pmatrix}\\
\quad \ \ \oplus
\begin{pmatrix}
a_1^{m+n}&0&0\\
a_1^{m+n}x_1&a_1^{n}&0\\
a_1^{m+n}z_1 &a_1^{n} y_1&1
\end{pmatrix}
\end{pmatrix}
\oplus
\begin{pmatrix} (a_2)\oplus 
\begin{pmatrix}
a_2^{m}&0\\
a_2^{m} x_2&1
\end{pmatrix}
\oplus
\begin{pmatrix}
a_2^{n}&0\\
a_2^{n} y_2&1
\end{pmatrix}
\end{pmatrix}\\
&\overset{\phi}{\mapsto}
\left( (a_1)\oplus 
\begin{pmatrix}
a_1^{m}&0\\
a_1^{m} x_1&1
\end{pmatrix}
\oplus
\begin{pmatrix}
a_1^{n}&0\\
a_1^n y_1&1
\end{pmatrix}
\right)
\oplus
\left( (a_2)\oplus 
\begin{pmatrix}
a_2^{m}&0\\
a_2^{m} x_2&1
\end{pmatrix}
\oplus
\begin{pmatrix}
a_2^{n}&0\\
a_2^n y_2&1
\end{pmatrix}
\right).
\end{align}
Since $M^\prime$ is a direct summand of the pre-$t$-motive $M$, Tannakian duality yields a faithfully flat morphism $\varphi:\Gamma_{\Psi}\twoheadrightarrow \Gamma_{\Psi^\prime}$ as in Lemma \ref{LemmaTannakianProj}. 
It holds that
$\varphi=\overline{\varphi}\circ \iota$ by the lemma.
We also consider the morphisms $\overline{\psi}:=\phi \circ \overline{\varphi}$ and $\psi:=\phi \circ \varphi=\overline{\psi}\ \circ \iota$, see the following diagram:
\begin{center}
        \begin{tikzpicture}[auto] 
                   \node (22) at (0, 0) {$\Gamma_{\Psi}$}; \node (42) at (3, 0) {$G$}; \node (62) at (6, 0) {$\Gamma_{\Psi^\prime}$}; \node (82) at (9, 0) {$\Gamma_{\Psi^{\prime \prime}}$}; \node (822) at (9, -0.1) {};

                    \draw[{Hooks[right]}->] (22) to node {$\iota$}(42);
                    \draw[->>] (42) to node {$\overline{\varphi}$}(62);
                    \draw[->>] (62) to node {$\phi$}(82);
                    \draw[->>] (42) to [bend right=20] node {} (82);
                    \draw[-] (82) to [bend left=20] node {$\quad \quad \quad \quad \overline{\psi}=\phi \circ \overline{\varphi}$} (42);
                    \draw[->>] (22) to [bend left=20] node {$\psi = \phi \circ \varphi=\phi \circ \overline{\varphi} \circ \iota=\overline{\psi} \circ \iota$} (82);
                    \draw[white,line width=2pt] (22) to [bend right=20] node { } (62);
                    \draw[->>] (22) to [bend right=20] node {$\varphi \quad \quad \quad \quad $} (62);
                    \end{tikzpicture}.
  \end{center}
Note that morphisms $\overline{\psi}$ and $\psi$ are given as follows:

 \begin{small}
  \begin{equation}
  \bigoplus_{i=1,\,2}
\begin{pmatrix} (a_i)\oplus 
\begin{pmatrix}
a_i^{m}&0\\
a_i^{m} x_i&1
\end{pmatrix}
\oplus
\begin{pmatrix}
a_i^{n}&0\\
a_i^n y_i&1
\end{pmatrix}\\
\quad \ 
\oplus
\begin{pmatrix}
a_i^{m+n}&0&0\\
a_i^{m+n} x_i&a_i^{n}&0\\
a_i^{m+n} z_i &a_i^{n}y_i&1
\end{pmatrix}
\end{pmatrix}
\mapsto
\bigoplus_{i=1,\,2}
\left( (a_i)\oplus 
\begin{pmatrix}
a_i^{m}&0\\
a_i^{m} x_i&1
\end{pmatrix}
\oplus
\begin{pmatrix}
a_i^{n}&0\\
a_i^n y_i&1
\end{pmatrix}
\right).
  \end{equation}
  \end{small}

Using these morphisms, we put
  \begin{align}
  V&:=\ker \overline{\psi}
  =\left \{\bigoplus_{i=1,\,2} \left( I_1\oplus I_2 \oplus I_2 \oplus 
  \begin{pmatrix}
  1&0&0\\
  0&1&0\\
  z_i&0&1
  \end{pmatrix} \right) \right\} \simeq \mathbb{G}_a^2,\\
  \mathcal{V}&:= \ker \psi \subset V,  \text{ and }\\
  
  V^\prime&:=\ker \phi =\left \{ \left (I_1 \oplus I_2 \oplus I_2 \oplus 
  \begin{pmatrix}
  1&0&0\\
  0&1&0\\
  z_1&0&1
  \end{pmatrix}\right)
  \oplus \left(I_1 \oplus I_2 \oplus I_2\right) \right\}
  \simeq \mathbb{G}_a.
  \end{align}
Then the following commutative diagram shows that the composition $\mathcal{V} \overset{\iota}{\hookrightarrow} V \overset{\text{pr}_1}{\twoheadrightarrow} V^\prime$ is surjective:

\begin{center}
        \begin{tikzpicture}[auto] 
                   \node (22) at (0, 1.5) {$\mathcal{V}$}; \node (42) at (4, 1.5) {$\Gamma_{\Psi}$}; \node (62) at (8, 1.5) {$\Gamma_{\Psi^{\prime \prime}}$};
                    \node (20) at (0, 0) {$V$}; \node (40) at (4, 0) {$G$}; \node (60) at (8, 0) {$\Gamma_{\Psi^{\prime \prime}}$};
                    \node (2-2) at (0, -1.5) {$V^\prime$}; \node (4-2) at (4, -1.5) {$\Gamma_{\Psi^\prime}$}; \node (6-2) at (8, -1.5) {$\Gamma_{\Psi^{\prime \prime}}$};
                    ;

                    \draw[-] (62) to node[sloped] {$=$}(60);
                    \draw[-] (60) to node[sloped] {$=$}(6-2);
                    
                    \draw[->>] (40) to node {$\overline{\varphi}$}(4-2);

                    \draw[{Hooks[right]}->] (42) to node {$\iota$}(40);
                    \draw[{Hooks[right]}->] (22) to node {}(20);

                    \draw[{Hooks[right]}->] (22) to node {}(42);
                    \draw[{Hooks[right]}->] (20) to node {}(40);
                    \draw[{Hooks[right]}->] (2-2) to node {}(4-2);
    
                    \draw[->>] (42) to node {$\psi$}(62);
                    \draw[->>] (40) to node {$\overline{\psi}$}(60);
                    \draw[->>] (4-2) to node {$\phi$}(6-2);

                    \draw[->>] (20) to node {$\text{pr}_1$}(2-2);

                    \draw[white,line width=6pt] (42) to [bend right=50] node { } (4-2);
                    \draw[->>] (42) to [bend right=50] node {} (4-2);
                    \draw[-] (4-2) to [bend left=50] node {$\begin{matrix}\\ \\ \varphi=\overline{\varphi}\circ \iota\end{matrix}$} (42);
                \end{tikzpicture}.
    \end{center}

The surjectivity of the composition $\mathcal{V} \overset{\iota}{\hookrightarrow} V \overset{\text{pr}_1}{\twoheadrightarrow} V^\prime$ gives us $z_R \in \overline{\mathbb{F}_q(t)}$ such that
\begin{equation}
R:=\left( I_1 \oplus I_2 \oplus I_2 \oplus 
\begin{pmatrix}
1&0&0\\
0&1&0\\
1&0&1
\end{pmatrix} \right)
\oplus 
\left( I_1 \oplus I_2 \oplus I_2 \oplus 
\begin{pmatrix}
1&0&0\\
0&1&0\\
z_R&0&1
\end{pmatrix} \right) 
\end{equation}
is an element of the group $\mathcal{V}(\overline{\mathbb{F}_q(t)})$.
If we take any $\alpha \in \overline{\mathbb{F}_q(t)}^{\times}$ and consider the matrix 
\begin{equation}
Q^\prime:=\begin{pmatrix} (\alpha)\oplus 
\begin{pmatrix}
\alpha^{m}&0\\
0&1
\end{pmatrix}
\oplus
\begin{pmatrix}
\alpha^{n}&0\\
0&1
\end{pmatrix}\\
\quad \ \ \oplus
\begin{pmatrix}
\alpha^{m+n}&0&0\\
0&\alpha^{n}&0\\
0 &0&1
\end{pmatrix}
\end{pmatrix}
\oplus
\begin{pmatrix} I_1\oplus 
I_2
\oplus
I_2
\end{pmatrix}
\end{equation}
in $\Gamma_{\Psi^\prime}(\overline{\mathbb{F}_q(t)})$, then surjectivity of $\varphi$ enable us to pick
\begin{equation}
Q:=\begin{pmatrix} (\alpha)\oplus 
\begin{pmatrix}
\alpha^{m}&0\\
0&1
\end{pmatrix}
\oplus
\begin{pmatrix}
\alpha^{n}&0\\
0&1
\end{pmatrix}\\
\quad \ 
\oplus
\begin{pmatrix}
\alpha^{m+n}&0&0\\
0&\alpha^{n}&0\\
0 &0&1
\end{pmatrix}
\end{pmatrix}\oplus
\begin{pmatrix} I_1\oplus 
I_2
\oplus
I_2\\
\oplus
\begin{pmatrix}
1&0&0\\
0&1&0\\
z_Q&0&1
\end{pmatrix}
\end{pmatrix}\in \Gamma_{\Psi}(\overline{\mathbb{F}_q(t)}),
\end{equation}
where $z_Q$ is some element in $\overline{\mathbb{F}_q(t)}$.

As the algebraic subgroup $\mathcal{V}$ is codimension $0$ or $1$ in $V \simeq \mathbb{G}_a^2$, we have polynomials $P_1(X)$ and $P_2(X)$ such that the algebraic set $\mathcal{V}(\overline{\mathbb{F}_q(t)})$ is given as follows:
\begin{equation}
\mathcal{V}(\overline{\mathbb{F}_q(t)})=\left \{\bigoplus_{i=1,\,2} \left( I_1\oplus I_2 \oplus I_2 \oplus 
  \begin{pmatrix}
  1&0&0\\
  0&1&0\\
  z_i&0&1
  \end{pmatrix} \right) \, \middle|\, P_1(z_1)+P_2(z_2)=0\right\},
\end{equation}
see \cite[Corollary 1.8]{Conrad2015}.
On the other hand, the commutator $Q^{-1}R Q$ is equal to
\begin{equation}\label{Examplecommutator}
\left( I_1\oplus 
I_2
\oplus
I_2
\oplus
\begin{pmatrix}
1&0&0\\
0&1&0\\
\alpha^{m+n}&0&1
\end{pmatrix}
\right)
\oplus
\left( I_1\oplus 
I_2
\oplus
I_2
\oplus
\begin{pmatrix}
1&0&0\\
0&1&0\\
z&0&1
\end{pmatrix}
\right) 
\end{equation}
and is in $\mathcal{V}(\overline{\mathbb{F}_q(t)})$ as $\mathcal{V}$ is normal in $\Gamma_{\Psi}$.
Hence we have $P_1(\alpha^{m+n})+P_2(z)=0$.

As $\alpha$ is an arbitrarily chosen element of the infinite set $ \overline{\mathbb{F}_q(t)}^\times$, we have $P_1(X)=0\in \overline{\mathbb{F}_q(t)}[X]$, and we can also show $P_2(X)=0$ by the similar arguments.
Consequently, we have $\mathcal{V}=V$ and $\dim \mathcal{V}=2$.
By the exact sequence $1 \rightarrow \mathcal{V} \overset{\iota}{\hookrightarrow}\Gamma_{\Psi} \overset{\phi}{\twoheadrightarrow} \Gamma_{\Psi^{\prime \prime}}=G^{\prime \prime} \twoheadrightarrow 1$, we have $\dim \Gamma_{\Psi}=8$ and hence the equalities
\begin{equation}
\trdeg_{\overline{K}}\overline{K}\big(\tilde{\pi}_{l_i},\,\zeta_{l_i}(m),\,\zeta_{l_i}(n),\,\zeta_{l_i}(m,\,n)\mid i=1,\,2\big)
=\dim \Gamma_{\Psi}=8.
\end{equation}
\end{example}
\subsection{Constructions of certain pre-$t$-motives}\label{subsectionconstructionpret}

\begin{definition}\label{mainpret}
We take a finite set $I$ of indices which satisfies Equation \eqref{subclosed} and fix its enumeration $I=\{ \mathbf{s}_1,\,\dots,\,\mathbf{s}_{\# I} \}$ so that $\operatorname{dep}\mathbf{s}_{j^\prime} \leq \operatorname{dep}\mathbf{s}_{j}$ for any $1 \leq j^\prime < j \leq \#I$.
Let $l_1,\,\dots,\,l_r$ be distinct positive integers and $R$ be their common multiple.
We also take an element $u_{i,\,s}$ of $\overline{K}[t]$ for each
$1\leq i \leq r$ and $s \geq 1$ such that the inequalities \eqref{convergencecondition} hold.

For $1\leq i \leq r$ and $0 \leq j \leq \#I$, we put $M(i,\,j)$ to be the $R/l_i$-th derived pre-$t$-motive of $\mathbf{M}(i,\,j):=C_{l_i} \oplus\bigoplus_{j^\prime \leq j}M_{l_i}[\mathbf{s}_{j^\prime}]$.
We note that $M(i,\,0)=C_{l_i}^{(R/l_i)}$. 
\end{definition}

\begin{example}
Let us take positive integers $m$ and $n$,
and consider the set $I:=\{ \mathbf{s}_1=(m),\,\mathbf{s}_2:=(n),\,\mathbf{s}_3:=(m,\,n)\}$ of indices.
We fix $1 \leq i \leq r $ and put $u_{i,\,m}=H_{i,m-1}$ and $u_{i,\,n}=H_{i,n-1}$. 
The pre-$t$-motive $\mathbf{M}(i,\,3):=C_{l_i} \oplus\bigoplus_{j^\prime \leq j}M_{l_i}[\mathbf{s}_{j^\prime}]$ is that of level $l_i$ defined by the matrix $\Phi=\Phi(i,\,3) \in \operatorname{GL}_{8/\overline{\mathbb{F}_ q(t)}}(\overline{K}[t])$ in Equation \eqref{examplematrixPhi}, which has a rigid analytic trivialization $\Psi(i,\,3)$ $ \in \operatorname{GL}_{8/\overline{\mathbb{F}_ q(t)}}(\mathbb{L})$ given by in Equation \eqref{examplematrixPsi}.
The pre-$t$-motive $M(i,\,3)$ is of level $R$ and defined by the matrix $$\Phi^{(-R+l_i)}\Phi^{(-R+2l_i)}\cdots \Phi^{(-l_i)}\Phi,$$ which also has $\Psi(i,\,3)$ as its rigid analytic trivialization.
\end{example}

For any index $\mathbf{s}$ and $1 \leq j \leq \# I$ with $\mathbf{s} \in \operatorname{Sub}(\mathbf{s}_j)$, it follows that $\mathbf{s}=\mathbf{s}_{j^\prime}$ for some $j^\prime \leq j$ by assumptions on the set $I$ and on its enumeration. Thus we have
\begin{equation}
    \dim \Gamma_{M(i,\,j)}=\trdeg_{\overline{K}}\overline{K}\Big(\mathcal{L}_{l_i,\,\mathbf{s}_{j^\prime}}|_{t=\theta},\,\Omega_{l_i}|_{t=\theta} \ \Big|\  1 \leq j^\prime \leq j \Big)
\end{equation}
for each $1 \leq i \leq r$ and $0 \leq j \leq \# I$ by Theorem \ref{ThmPapanikolas} and Equations \eqref{derivedsameGaloisgroup} and \eqref{PsiUS}, hence  inequalities 
\begin{align}\label{successor}
\dim \Gamma_{M(i,\,j-1)} \leq \dim \Gamma_{M(i,\,j)} \leq \dim \Gamma_{M(i,\,j-1)}+1
\end{align}
hold for $1 \leq j \leq \# I$.

\begin{Lemma}\label{LemmaTannakianProjfixedl}
    We have a faithfully flat morphism 
    \begin{equation}\label{Tannakianproj}
 \Gamma_{M(i,\,j)} \twoheadrightarrow \Gamma_{M(i,\,j^\prime)}
    \end{equation}
    by Tannakian duality for $1 \leq i \leq r$ and $0 \leq j^\prime \leq j \leq \#I$.
\end{Lemma}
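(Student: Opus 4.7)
The plan is to realize $M(i,j')$ as a direct summand of $M(i,j)$ and then invoke Tannakian duality, exactly parallel to the argument recalled just before Lemma \ref{LemmaTannakianProj}.

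First, I would verify the direct-summand claim at the level of pre-$t$-motives of level $l_i$. By construction,
\begin{equation}
    \mathbf{M}(i,j) = C_{l_i} \oplus \bigoplus_{j'' \leq j} M_{l_i}[\mathbf{s}_{j''}] = \mathbf{M}(i,j') \oplus \bigoplus_{j' < j'' \leq j} M_{l_i}[\mathbf{s}_{j''}],
\end{equation}
so $\mathbf{M}(i,j')$ is a direct summand of $\mathbf{M}(i,j)$ in the category $\mathcal{P}_{l_i}$ of pre-$t$-motives of level $l_i$. Next I would pass to derived pre-$t$-motives: since taking the $R/l_i$-th derived pre-$t$-motive is a functor $\mathcal{P}_{l_i}\to\mathcal{P}_R$ that respects direct sums (the defining matrix of a direct sum is the block diagonal of the defining matrices, and the derived operation is applied block by block), it follows that $M(i,j')$ is a direct summand of $M(i,j)$ in $\mathcal{P}_R$, and both are rigid analytically trivial (using the explicit rigid analytic trivializations coming from $\Omega_{l_i}$ and the series $\mathcal{L}_{l_i,\,\mathbf{s}_{j''}}$, as recalled in \S\ref{subsectionATseries}).

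Consequently, the Tannakian subcategory $\langle M(i,j') \rangle$ of $\mathcal{R}$ generated by $M(i,j')$ is contained in $\langle M(i,j) \rangle$. By Tannakian duality, as cited in \cite[Proposition 2.21]{Deligne1982} and already exploited in the proof of Lemma \ref{LemmaTannakianProj}, this inclusion of neutral Tannakian subcategories corresponds to a faithfully flat morphism of affine group schemes
\begin{equation}
    \Gamma_{M(i,j)} \twoheadrightarrow \Gamma_{M(i,j')}
\end{equation}
over $\mathbb{F}_{p^R}(t)$, which is the desired map.

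There is essentially no obstacle here; the statement is a direct specialization of the general principle used throughout Section \ref{subsectionPapanikolas}. The only point worth spelling out is that the derived construction (Definition \ref{Defderived}) is compatible with direct sums, so that the summand relation of the original pre-$t$-motives of level $l_i$ transports to $\mathcal{P}_R$. Once that is noted, the faithful flatness is automatic from Tannakian formalism.
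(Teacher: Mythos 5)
Your proposal is correct and follows essentially the same route as the paper: identify $M(i,j')$ as a direct summand of $M(i,j)$ and apply Tannakian duality via Lemma \ref{LemmaTannakianProj}. The only difference is that you explicitly spell out the compatibility of the derived-pre-$t$-motive construction (Definition \ref{Defderived}) with direct sums, a small point the paper leaves implicit but which is worth recording.
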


\begin{proof}
As the pre-$t$-motives $M(i,\,j^\prime)$ is a direct summand of $M(i,\,j)$ for each $0 \leq j^\prime \leq j \leq \#I$, Lemma \ref{LemmaTannakianProj} yields the assertion.
   \end{proof}
We will consider a concrete example of these faithfully flat morphisms in Example \ref{ExampleGammahatmorph}.
Using these morphisms, we define algebraic groups $U_{i,\,j}$ and $V_{i,\,j}$ as follows:
\begin{definition}\label{DefUandV}
For $1 \leq i \leq r$ and $0  \leq j \leq \#I$, we put a $U_{i,\,j}$ to be the kernel of the morphism $\Gamma_{M(i,\,j)}\twoheadrightarrow \Gamma_{C_{l_i}^{(R/l_i)}}=\Gamma_{M(i,\,0)}$ given in Lemma \ref{LemmaTannakianProjfixedl}. 
If $j \geq 1$, we put the $V_{i,\,j}$ to be the kernel of the restriction $U_{i,\,j}\twoheadrightarrow U_{i,\,j-1}$ induced by the surjection $\Gamma_{M(i,\,j)}\twoheadrightarrow\Gamma_{M(i,\,j-1)}$ given by the lemma, see the diagram below:

\begin{center}
\begin{tikzpicture}[auto]
 \node (24) at (2, 1.5) {$V_{i,\,j}$}; \node (44) at (4, 1.5) {$U_{i,\,j}$}; \node (64) at (7.5, 1.5) {$U_{i,\,j-1}$};
\node (42) at (4, 0) {$\Gamma_{M(i,\,j)}$}; \node (62) at (7.5, 0) {$\Gamma_{M(i,\,j-1)}$};
\node (0) at (4, -1.5) {$\Gamma_{M(i,\,0)}$}; \node (00) at (7.5, -1.5) {$\Gamma_{M(i,\,0)}$};


\draw[{Hooks[right]}->] (64) to node[sloped] {}(62);
\draw[->>] (42) to node {}(62);
\draw[->>] (44) to node {}(64);
\draw[-](0) to node{$=$} (00);
\draw[->>] (42) to node {}(0);
\draw[->>] (62) to node {}(00);

\draw[{Hooks[right]}->] (44) to node {}(42);

\draw[{Hooks[right]}->] (24) to node {}(44);



\end{tikzpicture}.
\end{center}
\end{definition}

For example, $\Gamma_{M(i,\,0)}$ is defined to be the $t$-motivic Galois group of the $R/l_i$-th derived pre-$t$-motive $C_{l_i}^{(R/l_i)}$ of the Carlitz motive $C_{l_i}$ and $U_{i,\,0}$ is the trivial algebraic group. 
\subsection{The varieties $\hat{\Gamma}_{j}$ containing algebraic groups $\Gamma_{M(i,\,j)}$}\label{eachlevel}
This subsection discusses $t$-motivic Galois groups of pre-$t$-motives introduced in Definition \ref{mainpret}. Our aim in this subsection is to show that the subgroup $V_{i,\,j}\subset \Gamma_{M(i,\,j)}$ introduced in Definition \ref{DefUandV} can be regarded as an algebraic subgroup of $\mathbb{G}_a$ for any $1 \leq i \leq r$ and $1 \leq j \leq \#I $. In order to do that, we construct an explicit algebraic variety $\hat{\Gamma}_j$ over the algebraically closed field $\overline{\mathbb{F}_q(t)}$ for each $0 \leq j \leq \# I$ and observe that the $t$-motivic Galois group $\Gamma_{M(i,\,j)}$ is a closed subscheme of $\hat{\Gamma}_j$ for each $i$. 


 We take variables $a$ and $x_{\mathbf{s}}$ for each index $\mathbf{s}\in I$.
For $\mathbf{s}=(s_1,\,\dots,\,s_d)\in I$, we write $X_{\mathbf{s}}$ for the lower triangle square matrix 
\begin{equation}
    \begin{pmatrix}
        a^{s_1+\cdots+s_d}&0&\cdots&\cdots&0\\
        a^{s_1+\cdots+s_d} x_{(s_1)}& a^{s_2+\cdots+s_d}&0&\cdots&0\\
        a^{s_1+\cdots+s_d} x_{(s_1,\,s_2)}&a^{s_2+\cdots+s_d} x_{(s_2)} & \ddots&\vdots&\vdots\\
        \vdots& \vdots& \ddots& a^{s_d}&0\\
        a^{s_1+\cdots+s_d} x_{(s_1,\,\dots,\,s_d)}&a^{s_2+\cdots+s_d} x_{(s_2,\,\dots,\,s_d)}&\cdots&a^{s_d}x_{(s_d)} &1
    \end{pmatrix}.
\end{equation}
\begin{definition}
Take a finite set $I$ of indices which satisfies Equation \eqref{subclosed} and its enumeration $I=\{ \mathbf{s}_1,\,\dots,\,\mathbf{s}_{\# I} \}$ such that $\operatorname{dep}\mathbf{s}_{j^\prime} \leq \operatorname{dep}\mathbf{s}_{j}$ for any $1 \leq j^\prime < j \leq \#I$.  Define $\hat{\Gamma}_j$ to be the closed subscheme of $\operatorname{GL}_{N_j\,/\mathbb{F}_{p^R}(t)}$, where $N_j:=1+(\operatorname{dep}(\mathbf{s}_1)+1)+\cdots+(\operatorname{dep}(\mathbf{s}_j)+1)$, consisting of matrix of the form $(a)\oplus X_{\mathbf{s}_1}\oplus\cdots\oplus X_{\mathbf{s}_j}$ with $a \neq 0$ for $0 \leq j \leq \#I$.
\end{definition}
The algebraic set $\hat{\Gamma}_j$ is isomorphic to the smooth and irrreducible variety
\begin{equation}
    \mathbb{G}_m \times \mathbb{A}^j=\operatorname{Spec} \mathbb{F}_{p^R}(t)[a,\,a^{-1},\,x_{\mathbf{s}_1},\,\dots,\,x_{\mathbf{s}_j}].
\end{equation}
We have the first projection $\operatorname{pr}_j:\hat{\Gamma}_j \twoheadrightarrow \mathbb{G}_m$ given by $(a)\oplus X_{\mathbf{s}_1}\oplus\cdots\oplus X_{\mathbf{s}_j} \mapsto (a)$ and we define $\hat{U}_j$ to be the inverse image of the unit element of $\mathbb{G}_m$.

\begin{example}\label{ExampleGammahat}
We take positive integers $m$ and $n$,
and consider the set $I:=\{ \mathbf{s}_1=(m),\,\mathbf{s}_2:=(n),\,\mathbf{s}_3:=(m,\,n)\}$ of indices again.
Then the algebraic sets $\hat{\Gamma}_2$ and
$\hat{\Gamma}_3$ are given as follows:
\begin{align}
\hat{\Gamma}_2&=\left\{    (a)\oplus 
\begin{pmatrix}
a^{m}&0\\
a^{m}x_{\mathbf{s}_1}&1
\end{pmatrix}
\oplus
\begin{pmatrix}
a^{n}&0\\
a^{n}x_{\mathbf{s}_2}&1
\end{pmatrix}
\, \middle| \, a\neq 0\right\}\\
\hat{\Gamma}_3&=\left\{    (a)\oplus 
\begin{pmatrix}
a^{m}&0\\
a^{m}x_{\mathbf{s}_1}&1
\end{pmatrix}
\oplus
\begin{pmatrix}
a^{n}&0\\
a^{n}x_{\mathbf{s}_2}&1
\end{pmatrix}\oplus
\begin{pmatrix}
a^{m+n}&0&0\\
a^{m+n}x_{\mathbf{s}_1}&a^{n}&0\\
a^{m+n}x_{\mathbf{s}_3} &a^{n} x_{\mathbf{s}_2}&1
\end{pmatrix}
\, \middle| \, a\neq 0\right\}.
\end{align}
\end{example}

The natural projection $\hat{\Gamma}_j \twoheadrightarrow \hat{\Gamma}_{j-1}$ given by
\begin{equation}
    (a)\oplus X_{\mathbf{s}_1}\oplus\cdots\oplus X_{\mathbf{s}_j} \mapsto  (a)\oplus X_{\mathbf{s}_1}\oplus\cdots\oplus X_{\mathbf{s}_{j-1}}
\end{equation}
restricts to the surjection $\operatorname{pr}_j^\prime:\hat{U}_j \twoheadrightarrow\hat{U}_{j-1}$ for each $1 \leq j \leq \#I$ and we write $\hat{V}_j \subset \hat{U}_j$ for the inverse image of the unit element. By definition, we have the following equality
\begin{small}
\begin{equation}\label{shapeVhat}
   \hat{V}_j =\left \{(1)\oplus I_{\operatorname{dep}(\mathbf{s}_1)+1} \oplus \cdots \oplus I_{\operatorname{dep}(\mathbf{s}_{j-1})+1} \oplus 
    \begin{pmatrix}
        1&0 & \cdots&\cdots& 0 \\
        0& 1&0&\cdots&\vdots\\
        \vdots&0&\ddots&\ddots&\vdots\\
        0&\vdots&\ddots&1&0\\
        x_{\mathbf{s}_j}& 0&\cdots&0&1
    \end{pmatrix}\,\middle | \, x_{\mathbf{s}_j} \in \mathbb{G}_a \right\}.
\end{equation}
\end{small}
Therefore $\hat{V}_j$ is an algebraic subgroup of $\operatorname{GL}_{N_j\,/\mathbb{F}_{p^R}(t)}$ and is isomorphic to $\mathbb{G}_a$.

    Actually, the subscheme $\hat{\Gamma}_j$ is an algebraic subgroup of $\operatorname{GL}_{N_j\,/\overline{\mathbb{F}_{p^R}(t)}}$ and $\operatorname{pr}_j$, $\operatorname{pr}_j^\prime$ are group homomorphisms for each $j$. Hence, it turns that $\hat{U}_j$ and $\hat{V}_j$ are kernels of these homomorphisms. However, we omit to confirm that $\hat{\Gamma}_j$ is a subgroup since it is unnecessary for our purpose.

\begin{Lemma}\label{lemmaexolicit}
    Take $1 \leq i \leq r$ and a finite set $I$ of indices which satisfies Equation \eqref{subclosed},
and fix its enumeration $I=\{ \mathbf{s}_1,\,\dots,\,\mathbf{s}_{\# I} \}$ so that $\operatorname{dep}\mathbf{s}_{j^\prime} \leq \operatorname{dep}\mathbf{s}_{j}$ for any $1 \leq j^\prime < j \leq \#I$.
Take also an element $u_{i,\,s}$ of $\overline{K}[t]$ such that
\begin{equation}
    ||u_{i,\,s}||_\infty<|\theta|_\infty^{\frac{s p^{l_i}}{p^{l_i}-1}}
\end{equation}
for each $s \geq 1$.
Then the $t$-motivic Galois group $\Gamma_{M(i,\,j)}$ of $M(i,\,j)$ in Definition \ref{mainpret} is a closed subscheme of $\hat{\Gamma}_{j}$ for each $1 \leq i \leq r$ and $0 \leq j \leq \# I$. 
\end{Lemma}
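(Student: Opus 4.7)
The plan is to apply the minimality characterization of $\Gamma_{\Psi}$ from \eqref{GammaPsi}: it is the smallest closed subscheme of $\operatorname{GL}_{N_j/\mathbb{F}_{p^R}(t)}$ containing $\widetilde{\Psi}$ as an $\mathbb{L}\otimes_{\overline{K}(t)}\mathbb{L}$-valued point. Since taking the $(R/l_i)$-th derived pre-$t$-motive preserves the rigid analytic trivialization (see \eqref{derivedsameGaloisgroup}), we have $\Gamma_{M(i,j)}=\Gamma_{\Psi(i,j)}$ where $\Psi(i,j):=(\Omega_{l_i})\oplus\bigoplus_{j'\leq j}\Psi_{l_i}[\mathbf{s}_{j'}]$. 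Thus it suffices to exhibit $\widetilde{\Psi(i,j)}$ as an $\mathbb{L}\otimes_{\overline{K}(t)}\mathbb{L}$-valued point of $\hat{\Gamma}_j$.

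Setting $a:=\Omega_{l_i}^{-1}\otimes\Omega_{l_i}\in(\mathbb{L}\otimes_{\overline{K}(t)}\mathbb{L})^{\times}$, the matrix $\widetilde{\Psi(i,j)}$ is block diagonal, with first block $a$ and remaining blocks $\widetilde{\Psi_{l_i}[\mathbf{s}_{j'}]}$. The main step is to verify, from the explicit formula for $\widetilde{\Psi_{l_i}[\mathbf{u};\mathbf{s}]}$ recalled at the end of Section \ref{subsectionATseries}, that for each $\mathbf{s}_{j'}=(s_1,\ldots,s_d)$ the block $\widetilde{\Psi_{l_i}[\mathbf{s}_{j'}]}$ fits the shape of $X_{\mathbf{s}_{j'}}$: the diagonal $(i'',i'')$-entry equals $a^{s_{i''}+\cdots+s_d}$ (understood as $1$ when $i''=d+1$), and each strictly lower-triangular $(i'',j'')$-entry factors uniquely as $a^{s_{j''}+\cdots+s_d}\,\xi_{(s_{j''},\ldots,s_{i''-1})}$ for an explicit $\xi_{(s_{j''},\ldots,s_{i''-1})}\in\mathbb{L}\otimes_{\overline{K}(t)}\mathbb{L}$. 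Assigning $a$ and these $\xi_{\mathbf{s}}$ to the coordinate functions of $\hat{\Gamma}_j$ then realizes $\widetilde{\Psi(i,j)}$ as a point of $\hat{\Gamma}_j(\mathbb{L}\otimes_{\overline{K}(t)}\mathbb{L})$, and the inclusion $\Gamma_{M(i,j)}\subset\hat{\Gamma}_j$ follows from minimality.

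The main obstacle is bookkeeping around two consistency checks. First, the variables $x_{\mathbf{s}}$ appearing in the block $X_{\mathbf{s}_{j'}}$ are indexed by all $\mathbf{s}\in\operatorname{Sub}(\mathbf{s}_{j'})$, and every such $\mathbf{s}$ must occur among the coordinates $x_{\mathbf{s}_1},\ldots,x_{\mathbf{s}_j}$ of $\hat{\Gamma}_j$; this is guaranteed precisely by the sub-closed hypothesis \eqref{subclosed} together with the depth-ordered enumeration of $I$. Second, a common sub-index $\mathbf{s}$ may appear inside several blocks $X_{\mathbf{s}_{j'}}$ (and as a standalone block when $\mathbf{s}=\mathbf{s}_{j''}\in I$), and the values $\xi_{\mathbf{s}}$ extracted from these different blocks must coincide so that the substitution is well defined. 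This compatibility holds because the series $L_{n,j''}$ entering $\Psi_{l_i}[\mathbf{s}_{j'}]$ depend only on the sub-tuple of $\mathbf{s}_{j'}$ from position $j''$ to $n-1$ and on the corresponding fixed parameters $u_{i,s_\bullet}$, and not on the ambient index $\mathbf{s}_{j'}$; hence the explicit combination of tensor products that produces $\xi_{\mathbf{s}}$ is the same in every block. Once these consistencies are verified, the lemma follows.
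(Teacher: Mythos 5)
Your proof is correct and follows essentially the same route as the paper: invoke the minimality characterization of $\Gamma_{\Psi}$ as the smallest closed subscheme having $\widetilde{\Psi}$ as an $\mathbb{L}\otimes_{\overline{K}(t)}\mathbb{L}$-valued point, identify $\Gamma_{M(i,j)}$ with $\Gamma_{\Psi(i,j)}$ via Theorem \ref{ThmPapanikolas} and Equation \eqref{derivedsameGaloisgroup}, and then match the explicit entry formula for $\widetilde{\Psi_{l_i}[\mathbf{u};\mathbf{s}]}$ recalled at the end of \S\ref{subsectionATseries} against the coordinate functions $a$ and $x_{\mathbf{s}}$ of $\hat{\Gamma}_j$. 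You are in fact slightly more explicit than the paper's terse proof about the two consistency checks (that sub-closedness plus the depth-ordered enumeration supplies every needed coordinate $x_{\mathbf{s}}$, and that the value substituted for $x_{\mathbf{s}}$ is independent of which block it is read off from), both of which the paper takes for granted under the phrase ``Recalling the calculation at the end of Subsection \ref{subsectionATseries}.''
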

Note that the varieties $\hat{\Gamma}_j$, $\hat{U}_j$, and $\hat{V}_j$ are independent of $1 \leq i \leq r$ and of the choice of the sequence $u_{i,\,1},\,u_{i,\,2},\,\dots \in \overline{K}[t]$.
\begin{proof}
    We put $\Psi:=(\Omega_{l_i})\otimes \Psi_{l_i}[\mathbf{s}_1]\oplus\cdots \oplus \Psi_{l_i}[\mathbf{s}_j]$. 
    Recalling the calculation at the end of Subsection \ref{subsectionATseries}, 
we can see that the matrix
\begin{equation}
    \widetilde{\Psi}:=\widetilde{\left((\Omega_{l_i})\otimes \Psi_{l_i}[\mathbf{s}_1]\oplus\cdots \oplus \Psi_{l_i}[\mathbf{s}_j]\right)}=\widetilde{(\Omega_{l_i})}\otimes\widetilde{\Psi}_{l_i}[\mathbf{s}_1]\oplus \cdots \oplus \widetilde{\Psi}_{l_i}[\mathbf{s}_j] 
\end{equation}
is an element of $\hat{\Gamma}_j(\mathbb{L}\otimes_{\overline{K}(t)} \mathbb{L})$; the variable $a$ corresponds to $\Omega_{l_i}^{-1}\otimes \Omega_{l_i}$ and the variable $x_{\mathbf{s}}$ corresponds to
\begin{equation}
    \sum_{n=1}^{d+1}\sum_{m=0}^{d+1-n}(-1)^m \sum_{\substack{n=k_0<k_1<\cdots\\
    \cdots<k_{m-1}<k_m=d+1}}L_{k_1,\,k_0}L_{k_2,\,k_1}\cdots L_{k_m,\,k_m-1}\otimes \Omega^{s_1+\cdots+s_d}L_{n,\,1}
\end{equation}
for each $\mathbf{s}=(s_1,\,\dots,\,s_d) \in I$, where $L_{j^\prime,\,j}$ is the series
\begin{equation}
    \mathcal{L}_{l_i,\,(s_{j},\,\dots,\,s_{j^\prime-1})}
\end{equation}
for $1 \leq j \leq j^\prime \leq d+1$. So the group $\Gamma_{\Psi}$ is a closed subscheme of $\hat{\Gamma}_j$ as $\Gamma_{\Psi}$ was characterized as the smallest closed subscheme of $\operatorname{GL}_{N_j}$ which has $\widetilde{\Psi}$ as its $\mathbb{L}\otimes_{\overline{K}(t)}\mathbb{L}$-valued point. 
Theorem \ref{ThmPapanikolas} enables us to identify the $t$-motivic Galois group $\Gamma_{M(i,\,j)}$ with $\Gamma_{\Psi}$ as algebraic groups over $\overline{\mathbb{F}_{p^R}(t)}$.
\end{proof}
For $1 \leq i \leq r$ and $0 \leq j^\prime< j \leq \# I$, Lemma \ref{LemmaTannakianProj} yields the commutative diagram
\begin{equation}\label{wholespacediagram}
\begin{tikzpicture}[auto]
\node (03) at (0, 2) {$\Gamma_{M(i,\,j)}$}; \node (33) at (3, 2) {$\Gamma_{M(i,\,j^\prime)}$}; 
\node (00) at (0, 0) {$\hat{\Gamma}_{j}$}; \node (30) at (3, 0) {$\hat{\Gamma}_{j^\prime}$,}; 
\draw[->>] (03) to node {$\phi$}(33);
\draw[->>] (00) to node {projection}(30);
\draw[{Hooks[right]}->] (03) to node {$\iota$} (00);
\draw[{Hooks[right]}->] (33) to node {$\iota$} (30);
\end{tikzpicture}
\end{equation}
where $\phi$ is the morphism given in Lemma \ref{LemmaTannakianProjfixedl} and the vertical lines are closed immersions given by Lemma \ref{lemmaexolicit}.

\begin{example}\label{ExampleGammahatmorph}
We continue to consider the case where positive integers $m$ and $n$ are took and the set $I:=\{ \mathbf{s}_1=(m),\,\mathbf{s}_2:=(n),\,\mathbf{s}_3:=(m,\,n)\}$ of indices.
In this case, we have inclusions $\Gamma_{M(i,\,2)} \subset \hat{\Gamma}_2$ and
$\Gamma_{M(i,\,3)} \subset \hat{\Gamma}_3$ for each $i$ (see Example \ref{ExampleGammahat}), and we can describe the faithfully flat morphism in Lemma \ref{LemmaTannakianProjfixedl} as follows
\begin{align}
&\left(    (a)\oplus 
\begin{pmatrix}
a^{m}&0\\
a^{m}x_{\mathbf{s}_1}&1
\end{pmatrix}
\oplus
\begin{pmatrix}
a^{n}&0\\
a^{n}x_{\mathbf{s}_2}&1
\end{pmatrix}\oplus
\begin{pmatrix}
a^{m+n}&0&0\\
a^{m+n}x_{\mathbf{s}_1}&a^{n}&0\\
a^{m+n}x_{\mathbf{s}_3}&a^{n} x_{\mathbf{s}_2}&1
\end{pmatrix}
\right)\\
\mapsto&\left(    (a)\oplus 
\begin{pmatrix}
a^{m}&0\\
a^{m}x_{\mathbf{s}_1}&1
\end{pmatrix}
\oplus
\begin{pmatrix}
a^{n}&0\\
a^{n}x_{\mathbf{s}_2}&1
\end{pmatrix}
\right).
\end{align}
\end{example}

Lemma \ref{lemmaexolicit} above shows that the group scheme $V_{i,\,j}$ (see Definition \ref{DefUandV}) can be seen as an algebraic subgroup of $\mathbb{G}_a$:
\begin{Corollary}\label{VinGa}
    For $1 \leq i \leq r$ and $1 \leq  j \leq \# I$, the algebraic group $V_{i,\,j}$ can be regarded as a subgroup of $\mathbb{G}_a$.
\end{Corollary}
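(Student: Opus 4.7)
The plan is to realize $V_{i,j}$ as a closed subgroup of $\hat V_j$ and then invoke the isomorphism $\hat V_j \simeq \mathbb{G}_a$ established just after Equation \eqref{shapeVhat}.

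First I would use Lemma \ref{lemmaexolicit} to view $\Gamma_{M(i,j)}$ and $\Gamma_{M(i,j-1)}$ as closed subschemes of $\hat{\Gamma}_j$ and $\hat{\Gamma}_{j-1}$ respectively, and $\Gamma_{M(i,0)} = \Gamma_{C_{l_i}^{(R/l_i)}}$ as a closed subgroup of $\hat{\Gamma}_0 \simeq \mathbb{G}_m$. The commutative diagram \eqref{wholespacediagram} (for the pair $j^\prime = j-1$ and the pair $j^\prime=0$) says that the Tannakian surjection $\Gamma_{M(i,j)} \twoheadrightarrow \Gamma_{M(i,j-1)}$ is the restriction of the natural projection $\hat\Gamma_j \twoheadrightarrow \hat\Gamma_{j-1}$, and that $\Gamma_{M(i,j)} \twoheadrightarrow \Gamma_{M(i,0)}$ is the restriction of $\operatorname{pr}_j \colon \hat\Gamma_j \twoheadrightarrow \mathbb{G}_m$.

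From this compatibility, the kernel $U_{i,j}$ of $\Gamma_{M(i,j)} \twoheadrightarrow \Gamma_{M(i,0)}$ sits inside the kernel $\hat U_j$ of $\operatorname{pr}_j$, simply as the scheme-theoretic intersection $U_{i,j} = \Gamma_{M(i,j)} \cap \hat U_j$ inside $\hat\Gamma_j$. Similarly, $U_{i,j-1} = \Gamma_{M(i,j-1)} \cap \hat U_{j-1}$ inside $\hat\Gamma_{j-1}$, and the morphism $U_{i,j} \twoheadrightarrow U_{i,j-1}$ is the restriction of $\operatorname{pr}_j^\prime \colon \hat U_j \twoheadrightarrow \hat U_{j-1}$. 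Consequently $V_{i,j}$, defined as the kernel of this restriction, equals the scheme-theoretic intersection $\Gamma_{M(i,j)} \cap \hat V_j$ inside $\hat\Gamma_j$.

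Since $\hat V_j$ is isomorphic to $\mathbb{G}_a$ by the explicit description in Equation \eqref{shapeVhat}, the inclusion $V_{i,j} \hookrightarrow \hat V_j \simeq \mathbb{G}_a$ realizes $V_{i,j}$ as a closed subgroup of $\mathbb{G}_a$, which is exactly the claim. There is no genuine obstacle here — the only point requiring attention is that the closed immersions coming from Lemma \ref{lemmaexolicit} are compatible with both structural surjections (the one to $\Gamma_{M(i,j-1)}$ and the one to $\Gamma_{M(i,0)}$), and this is precisely what diagram \eqref{wholespacediagram} provides.
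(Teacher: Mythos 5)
Your proposal is correct and follows essentially the same approach as the paper: the paper's proof simply invokes Diagram~\eqref{wholespacediagram} (with $j'=j-1$; the paper's ``$i'=i-1$'' is a typo) to place $V_{i,j}$ inside $\hat V_j$ and then cites Equation~\eqref{shapeVhat}. Your version spells out the intermediate identifications $U_{i,j}\subset\hat U_j$ and $V_{i,j}\subset\hat V_j$ via compatibility of both structural projections, which is exactly what the paper's terse reference to the diagram presupposes.
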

\begin{proof}
By considering Diagram \eqref{wholespacediagram} with $i^\prime=i-1$, we note that the algebraic group $V_{i,\,j}$ is a closed subgroup of $\hat{V}_j$ for each $i,\,j$. Now, the lemma follows from Equation \eqref{shapeVhat}.
\end{proof}

\subsection{The direct sums of the pre-$t$-motives}
The key step of the proof of Theorem \ref{main} is to get the equation $\Gamma_{M(1,\,j_1)\oplus M(2,\,j_2)\oplus \cdots \oplus M(r,\,j_r)} =\Gamma_{M(1,\,j_1)}\times \Gamma_{M(2,\,j_2)} \times \cdots \times \Gamma_{M(r,\,j_r)}$ for $0 \leq j_1,\,\dots,\,j_r \leq \#I$. 
We first verify the following inclusion:

\begin{Lemma}\label{Lemmainclusion}
Let us take a finite set $I$ of indices which satisfies Equation \eqref{subclosed}
and its enumeration $I=\{ \mathbf{s}_1,\,\dots,\,\mathbf{s}_{\# I} \}$ such that $\operatorname{dep}\mathbf{s}_{j^\prime} \leq \operatorname{dep}\mathbf{s}_{j}$ for any $1 \leq j^\prime < j \leq \#I$.
Take an element $u_{i,\,s}$ of $\overline{K}[t]$ which satisfies the inequality 
\begin{equation}
    ||u_{i,\,s}||_\infty<|\theta|_\infty^{\frac{s p^{l_i}}{p^{l_i}-1}}
\end{equation}
for each
$1\leq i \leq r$ and $s \geq 1$.
For any choice of $0 \leq j_1,\,\dots,\,j_r \leq \#I$, there exists a closed immersion
\begin{equation}
    \Gamma_{M(1,\,j_1)\oplus M(2,\,j_2)\oplus \cdots \oplus M(r,\,j_r)} \subset\Gamma_{M(1,\,j_1)}\times \Gamma_{M(2,\,j_2)} \times \cdots \times \Gamma_{M(r,\,j_r)} \label{mainimmersion}
\end{equation}
of algebraic groups where $M(i,\,j)$ is the pre-$t$-motive in Definition \ref{mainpret} for each $1 \leq i \leq r $ and $0 \leq j \leq \#I$.
\end{Lemma}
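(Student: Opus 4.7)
The plan is to deduce the closed immersion directly from the characterization of $\Gamma_\Psi$ in Equation \eqref{GammaPsi} as the smallest closed subscheme of $\operatorname{GL}$ having $\widetilde{\Psi}$ as an $\mathbb{L}\otimes_{\overline{K}(t)}\mathbb{L}$-valued point. This is essentially the $r$-fold generalization of the two-summand observation made immediately before Lemma \ref{LemmaTannakianProj}.

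First, for each $1 \leq i \leq r$ choose a matrix $\Phi_i \in \operatorname{GL}_{r_i}(\overline{K}(t))$ defining $M(i,j_i)$, together with a rigid analytic trivialization $\Psi_i \in \operatorname{GL}_{r_i}(\mathbb{L})$. Writing $M := M(1,j_1)\oplus\cdots\oplus M(r,j_r)$, the pre-$t$-motive $M$ is defined by the block-diagonal matrix $\Phi := \Phi_1 \oplus \cdots \oplus \Phi_r$, and a direct check of the equation $\Psi^{(-R)} = \Phi\Psi$ shows that $\Psi := \Psi_1 \oplus \cdots \oplus \Psi_r$ is a rigid analytic trivialization of $\Phi$. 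Since $\Psi$ is block-diagonal, so is the associated matrix over $\mathbb{L}\otimes_{\overline{K}(t)}\mathbb{L}$; explicitly,
\begin{equation}
\widetilde{\Psi} = \widetilde{\Psi}_1 \oplus \cdots \oplus \widetilde{\Psi}_r.
\end{equation}

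Second, identify $\operatorname{GL}_{r_1}\times\cdots\times\operatorname{GL}_{r_r}$ with the block-diagonal subgroup of $\operatorname{GL}_{r_1+\cdots+r_r}$, so that $\Gamma_{\Psi_1}\times\cdots\times\Gamma_{\Psi_r}$ becomes a closed subscheme of the latter. Because each $\widetilde{\Psi}_i$ is an $\mathbb{L}\otimes_{\overline{K}(t)}\mathbb{L}$-valued point of $\Gamma_{\Psi_i}$ by definition \eqref{GammaPsi}, the block-diagonal point $\widetilde{\Psi}$ lies in this product. Invoking the minimality in \eqref{GammaPsi} applied to $\widetilde{\Psi}$ forces $\Gamma_\Psi$ to be a closed subscheme of $\Gamma_{\Psi_1}\times\cdots\times\Gamma_{\Psi_r}$. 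Finally, Theorem \ref{ThmPapanikolas} identifies $\Gamma_\Psi$ with $\Gamma_M$ and each $\Gamma_{\Psi_i}$ with $\Gamma_{M(i,j_i)}$ as algebraic groups over $\mathbb{F}_{p^R}(t)$ (and hence over $\overline{\mathbb{F}_{p^R}(t)}$ after base change), yielding the desired closed immersion \eqref{mainimmersion}.

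No serious obstacle is expected in this step; everything reduces to the block-diagonal structure of $\widetilde{\Psi}$ and the universal property that defines $\Gamma_\Psi$. The real work of the paper lies in the reverse inclusion (i.e.\ the equality $\Gamma_M = \Gamma_{M(1,j_1)}\times\cdots\times\Gamma_{M(r,j_r)}$), which will be addressed in Subsection \ref{sscal} and requires the commutator computations hinted at in Example \ref{Example1} together with the structural description of $V_{i,j}$ inside $\mathbb{G}_a$ from Corollary \ref{VinGa}.
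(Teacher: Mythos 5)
Your proposal is correct and follows essentially the same route as the paper: you exhibit the direct-sum rigid analytic trivialization $\Psi = \Psi_1 \oplus \cdots \oplus \Psi_r$, observe that $\widetilde{\Psi} = \widetilde{\Psi}_1 \oplus \cdots \oplus \widetilde{\Psi}_r$ is an $\mathbb{L}\otimes_{\overline{K}(t)}\mathbb{L}$-valued point of the block-diagonal product $\Gamma_{\Psi_1}\times\cdots\times\Gamma_{\Psi_r}$, invoke the minimality in \eqref{GammaPsi} to get the closed immersion of $\Gamma_\Psi$, and then identify each $\Gamma_{\Psi_i}$ with $\Gamma_{M(i,j_i)}$ via Theorem \ref{ThmPapanikolas}. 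The only cosmetic difference is that the paper spells out the derived matrices $\Phi(i,j_i)^{(-R+l_i)}\cdots\Phi(i,j_i)$ and the explicit shape of $\Psi(i,j_i)$, whereas you leave them implicit, but the argument is the same.
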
 
\begin{proof}
    We note that, for each $1\leq i\leq r$, the matrix 
\begin{equation}
    \Phi(i,\,j_i)^{(-R+l_i)}\Phi(i,\,j_i)^{(-R+2l_i)}\cdots \Phi(i,\,j_i)^{(-l_i)}\Phi(i,\,j_i)
\end{equation}
where
\begin{equation}    
\Phi(i,\,j_i):=((t-\theta)) \oplus \bigoplus_{j^\prime \leq j_i}\Phi_{l_i}[\mathbf{s}_{j^\prime}]
\end{equation} 
defines a pre-$t$-motive $M(i,\,j_i)$ and has a rigid analytic trivialization
\begin{equation}    
\Psi(i,\,j_i):=(\Omega_{l_i}) \oplus \bigoplus_{j^\prime \leq j_i}\Psi_{l_i}[\mathbf{s}_{j^\prime}].
\end{equation}
In the notation introduced in Section \ref{subsectionPapanikolas}, the algebraic group \begin{equation}
    \Gamma_{\Psi(1,\,j_1)\oplus \Psi(2,\,j_2)\oplus \cdots \oplus \Psi(r,\,j_r)}
\end{equation}
was characterized to be the smallest closed subscheme which contains the $\mathbb{L}\otimes_{\overline{K}(t)}\mathbb{L}$-valued point
\begin{equation}
    \widetilde{(\Psi(1,\,j_1)\oplus \Psi(2,\,j_2)\oplus \cdots \oplus \Psi(r,\,j_r))}=\widetilde{\Psi}(1,\,j_1)\oplus \widetilde{\Psi}(2,\,j_2)\oplus \cdots \oplus \widetilde{\Psi}(r,\,j_r),
\end{equation}
which can be easily shown to be an $\mathbb{L}\otimes_{\overline{K}(t)}\mathbb{L}$-valued point of 
\begin{equation}
    \Gamma_{\Psi(1,\,j_1)}\times \Gamma_{\Psi(2,\,j_2)} \times \cdots \times \Gamma_{\Psi(r,\,j_r)}
\end{equation}
as we have $\widetilde{\Psi}(i,\,j_i)\in \Gamma_{\Psi(i,\,j_i)}(\mathbb{L}\otimes_{\overline{K}(t)}\mathbb{L})$ for any $i$ by definitions. Therefore, we have an inclusion
\begin{equation}
    \Gamma_{\Psi(1,\,j_1)\oplus \Psi(2,\,j_2)\oplus \cdots \oplus \Psi(r,\,j_r)} \subset\Gamma_{\Psi(1,\,j_1)}\times \Gamma_{\Psi(2,\,j_2)} \times \cdots \times \Gamma_{\Psi(r,\,j_r)}.
\end{equation}
Since Theorem \ref{ThmPapanikolas} yields the isomorphisms
\begin{align}
    \Gamma_{\Psi(1,\,j_1)\oplus \Psi(2,\,j_2)\oplus \cdots \oplus \Psi(r,\,j_r)}&\simeq \Gamma_{M(1,\,j_1)\oplus M(2,\,j_2)\oplus \cdots \oplus M(r,\,j_r)} \text{ and }\\
    \Gamma_{\Psi(i,\,j_i)}&\simeq \Gamma_{M(i,\,j_i)}
\end{align}
for each $1 \leq i \leq r$, we obtain the closed immersion \eqref{mainimmersion}.
\end{proof}

For $0 \leq j_1,\,\dots,\, j_r \leq \# I$, we simply write
\begin{equation}\label{AbbGamma}
    \Gamma_{j_1,\,j_2,\,\dots,\,j_r}:=\Gamma_{M(1,\,j_1)\oplus M(2,\,j_2)\oplus \cdots \oplus M(r,\,j_r)}.
\end{equation}
\begin{Proposition}\label{CorTannakiangeneral}
    Let us take $0 \leq j_i^\prime \leq j_i \leq \#I$ for each $1\leq i \leq r$. Then, we have a faithfully flat homomorphism
    \begin{equation}\label{TannakianProjgeneral}
        \Gamma_{j_1,\,\dots,\,j_r} \twoheadrightarrow \Gamma_{j_1^\prime,\,\dots,\,j_r^\prime}
    \end{equation}
and commutative diagram
\begin{equation}\label{wholedirectproductdiagram}
\begin{tikzpicture}[auto]
\node (03) at (0, 2) {$\Gamma_{j_1,\,\dots,\,j_r}$}; \node (33) at (7, 2) {$\Gamma_{j_1^\prime,\,\dots,\,j_r^\prime}$}; 
\node (00) at (0, 0) {$\Gamma_{M(1,\,j_1)}\times\cdots \times \Gamma_{M(r,\,j_r)}$}; \node (30) at (7, 0) {$\Gamma_{M(1,\,j_1^\prime)}\times\cdots \times \Gamma_{M(r,\,j_r^\prime)}$}; 
\draw[->>] (03) to node {}(33);
\draw[->>] (00) to node {$\Pi$}(30);
\draw[{Hooks[right]}->] (03) to node {$\iota$} (00);
\draw[{Hooks[right]}->] (33) to node {$\iota$} (30);
\end{tikzpicture}
\end{equation}
where the homomorphism $\Pi$ is the direct product of homomorphisms $\Gamma_{M(i,\,j_i)}\twoheadrightarrow \Gamma_{M(i,\,j_i^\prime)}$ $(1 \leq i \leq r)$ given in  Lemma \ref{LemmaTannakianProjfixedl} and vertical lines are closed immersions given in Lemma \ref{Lemmainclusion}.
\end{Proposition}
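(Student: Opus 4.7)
The existence of the faithfully flat morphism in Equation \eqref{TannakianProjgeneral} is essentially automatic. By construction, each $M(i,j_i^\prime)$ is a direct summand of $M(i,j_i)$ (the latter being the $(R/l_i)$-th derived pre-$t$-motive of $C_{l_i}\oplus\bigoplus_{j^{\prime\prime}\leq j_i}M_{l_i}[\mathbf{s}_{j^{\prime\prime}}]$), so $\bigoplus_{i=1}^r M(i,j_i^\prime)$ is a direct summand of $\bigoplus_{i=1}^r M(i,j_i)$. Applying Tannakian duality exactly as in the proof of Lemma \ref{LemmaTannakianProjfixedl} then yields the faithfully flat surjection \eqref{TannakianProjgeneral}.

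For the commutativity of Diagram \eqref{wholedirectproductdiagram}, my plan is to verify equality after post-composing with each coordinate projection $\operatorname{pr}_i:\prod_{i'} \Gamma_{M(i',j_{i'}^\prime)}\twoheadrightarrow \Gamma_{M(i,j_i^\prime)}$; this is sufficient since these projections jointly separate points. I would argue that both resulting maps $\Gamma_{j_1,\ldots,j_r}\twoheadrightarrow \Gamma_{M(i,j_i^\prime)}$ coincide with the canonical Tannakian projection arising from the inclusion of subcategories $\langle M(i,j_i^\prime)\rangle\subset \langle\bigoplus_{i'}M(i',j_{i'})\rangle$. For the top-then-right route, this identification uses the natural extension of Lemma \ref{LemmaTannakianProj} to $r$ summands (applied to $\bigoplus_{i'}M(i',j_{i'}^\prime)$), combined with the transitivity of Tannakian projections. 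For the left-then-bottom route, the same extension of Lemma \ref{LemmaTannakianProj} applied to $\bigoplus_{i'}M(i',j_{i'})$, followed by Lemma \ref{LemmaTannakianProjfixedl}, delivers the same identification.

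The main technical point to address is that Lemma \ref{LemmaTannakianProj} is stated only for a sum of two pre-$t$-motives, whereas here we need its $r$-summand version; one obtains this by iterating the two-summand case with $P_1=M(i,j_i)$ and $P_2=\bigoplus_{i'\neq i}M(i',j_{i'})$, but it deserves explicit mention. Beyond this bookkeeping, the proof contains no substantive obstacle: once every arrow in \eqref{wholedirectproductdiagram} is recognized as either a Tannakian projection or a coordinate projection, commutativity reduces to the tautological commutativity among coordinate projections in products of general linear groups, and the whole argument is a straightforward exercise in the functoriality of Tannakian duality.
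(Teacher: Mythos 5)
Your proposal is correct and follows essentially the same route as the paper: the faithfully flat morphism comes from the direct-summand relation $\bigoplus_i M(i,j_i^\prime)\subset\bigoplus_i M(i,j_i)$ via Tannakian duality, and commutativity is deduced from Lemma \ref{LemmaTannakianProj}. The paper's proof is terser (it simply states that commutativity ``follows from Lemma \ref{LemmaTannakianProj}''), and your explicit remark that the two-summand statement must be iterated to cover $r$ summands, together with the reduction to checking equality after each coordinate projection, is a helpful clarification rather than a different argument.
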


\begin{proof}
    As the pre-$t$-motive $M(1,\,j_1^\prime)\oplus \cdots \oplus M(r,\,j_r^\prime)$ is a direct summand of $M(1,\,j_1)\oplus \cdots \oplus M(r,\,j_r)$, the Tannakian category $ \langle M(1,\,j_1^\prime)\oplus \cdots \oplus M(r,\,j_r^\prime) \rangle$ can be seen as a full Tannakian subcategory of $ \langle M(1,\,j_1)\oplus \cdots \oplus M(r,\,j_r) \rangle$. Therefore, we obtain faithfully flat homomorphism in \eqref{TannakianProjgeneral}. The commutativity of the diagram follows from Lemma \ref{LemmaTannakianProj}.
\end{proof}

The morphisms $\varphi$, $\psi$, and $\phi$ in Example \ref{Example1} can be seen as examples of faithfully flat morphisms given in Proposition \ref{CorTannakiangeneral}.
In proving that Immersion \eqref{mainimmersion} is an isomorphism, we study unipotent radicals of $t$-motivic Galois groups. Considering unipotent radicals, we deduce the following from Lemma \ref{lemmaexolicit} and Proposition \ref{CorTannakiangeneral}:
\begin{Corollary}\label{LemmaTannakianprojuniradical}
For any $0 \leq j_1,\,\dots,\,j_r \leq \#I$, we let $\mathcal{U}_{j_1,\,\dots,\,j_r}$ be the kernel of the morphism 
$
    \pi:\Gamma_{j_1,\,\dots,\,j_r}\twoheadrightarrow \Gamma_{0,\,\dots,\,0}
$
given by Proposition \ref{CorTannakiangeneral};
\begin{equation}\label{DefcalU}
    \mathcal{U}_{j_1,\,\dots,\,j_r}:=\ker \left( \pi:\Gamma_{j_1,\,\dots,\,j_r}\twoheadrightarrow \Gamma_{0,\,\dots,\,0}\right).
\end{equation}
\begin{enumerate}
    \item 
    
     The closed immersion \begin{equation}
    \iota:\Gamma_{j_1,\,\dots,\,j_r} \hookrightarrow\Gamma_{M(1,\,j_1)}\times \Gamma_{M(2,\,j_2)} \times \cdots \times \Gamma_{M(r,\,j_r)}
\end{equation}
given by Lemma \ref{Lemmainclusion} induces the embedding of algebraic group $\mathcal{U}_{j_1,\,\dots,\,j_r}$ into the direct product
    $U_{1,\,j_1}\times \cdots \times U_{r,\,j_r}$ (see Definition \ref{DefUandV} for  $U_{1,\,j_1},\, \dots ,$ $ U_{r,\,j_r}$).

\item If we take $0 \leq j_i^\prime \leq j_i \leq \#I$ for each $1\leq i \leq r$, then the homomorshism
    \begin{equation}\label{TannakianProjgeneral2}
        \Gamma_{j_1,\,\dots,\,j_r} \twoheadrightarrow \Gamma_{j_1^\prime,\,\dots,\,j_r^\prime}
    \end{equation} 
    given by Proposition \ref{CorTannakiangeneral} induces faithfully flat morphism
\begin{equation}
    \mathcal{U}_{j_1,\,\dots,\,j_r} \twoheadrightarrow \mathcal{U}_{j_1^\prime,\,\dots,\,j_r^\prime}.
\end{equation}

\item Let us take $0 \leq j_i^\prime \leq j_i \leq \#I$ for $1 \leq i \leq r$. Then the following diagram is commutative:
    \begin{equation}\label{wholedirectproductdiagramunipotent}
\begin{tikzpicture}[auto]
\node (03) at (0, 2) {$\mathcal{U}_{j_1,\,\dots,\,j_r}$}; \node (33) at (7, 2) {$\mathcal{U}_{j_1^\prime,\,\dots,\,j_r^\prime}$}; 
\node (00) at (0, 0) {$U_{1,\,j_1}\times\cdots \times U_{r,\,j_r}$}; \node (30) at (7, 0) {$U_{1,\,j_1^\prime}\times\cdots \times U_{r,\,j_r^\prime}$}; 
\draw[->>] (03) to node {}(33);
\draw[->>] (00) to node {$\Pi^\prime$}(30);
\draw[{Hooks[right]}->] (03) to node {$\iota$} (00);
\draw[{Hooks[right]}->] (33) to node {$\iota$} (30);
\end{tikzpicture}
\end{equation}
where the homomorphism $\Pi^\prime$ is the restriction of $\Pi$ in Diagram \eqref{wholedirectproductdiagram}.
    \end{enumerate} 
\end{Corollary}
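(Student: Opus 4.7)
The plan is to deduce all three assertions by chasing diagrams obtained from Proposition \ref{CorTannakiangeneral} and taking kernels of the Tannakian projections down to $\Gamma_{0,\dots,0}$.

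For part (1), I would specialize diagram \eqref{wholedirectproductdiagram} to $j_1' = \cdots = j_r' = 0$. The top arrow is then $\pi$, whose kernel is $\mathcal{U}_{j_1,\dots,j_r}$ by \eqref{DefcalU}, while the bottom arrow $\Pi$ is the coordinatewise product of the projections $\Gamma_{M(i,j_i)} \twoheadrightarrow \Gamma_{M(i,0)}$, whose kernel is precisely $U_{1,j_1}\times \cdots \times U_{r,j_r}$ by Definition \ref{DefUandV}. Commutativity of the square forces the closed immersion $\iota$ to carry $\mathcal{U}_{j_1,\dots,j_r}$ into $\ker \Pi$, and the restriction of a closed immersion is again a closed immersion.

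For part (2), Proposition \ref{CorTannakiangeneral} applied twice (once to $(j_1,\dots,j_r)\to(j_1',\dots,j_r')$ and once to $(j_1',\dots,j_r')\to(0,\dots,0)$) factors $\pi$ as the composition $\Gamma_{j_1,\dots,j_r} \twoheadrightarrow \Gamma_{j_1',\dots,j_r'} \twoheadrightarrow \Gamma_{0,\dots,0}$, so the first arrow sends $\mathcal{U}_{j_1,\dots,j_r}$ into $\mathcal{U}_{j_1',\dots,j_r'}$. To obtain surjectivity between the kernels I would invoke the snake lemma on the induced commutative ladder of short exact sequences of algebraic groups, in which the rightmost vertical is the identity on $\Gamma_{0,\dots,0}$; the snake cokernel on the left vanishes, giving the desired surjection. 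Faithful flatness then follows since a surjective homomorphism of algebraic groups over a field is automatically faithfully flat.

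For part (3), the asserted square is simply the restriction of \eqref{wholedirectproductdiagram} to the kernels of the projections down to $\Gamma_{0,\dots,0}$ (respectively $\Gamma_{M(1,0)}\times\cdots\times\Gamma_{M(r,0)}$). Since each of the four arrows of \eqref{wholedirectproductdiagram} is compatible with these projections, the restriction is well defined, and commutativity is inherited directly from \eqref{wholedirectproductdiagram}. I do not expect a serious obstacle: the corollary is essentially a bookkeeping exercise combining Tannakian functoriality with the elementary fact that kernels commute with products, the only substantive ingredient being the standard result that surjective morphisms of algebraic groups over a field are faithfully flat.
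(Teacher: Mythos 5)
Your proposal is correct and takes essentially the same approach as the paper: all three parts are read off from the commutative squares of Proposition \ref{CorTannakiangeneral} by restricting to kernels of the projections down to $\Gamma_{0,\dots,0}$, with part (2) following from a two-row ladder of short exact sequences whose right vertical is the identity. The paper presents that ladder and asserts the surjectivity directly rather than citing the snake lemma by name, but the underlying diagram chase is the same.
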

\begin{proof}

Proposition \ref{CorTannakiangeneral} yields the immersion $\mathcal{U}_{j_1,\,\dots,\,j_r}\hookrightarrow U_{1,\,j_1}\times \cdots \times U_{r,\,j_r}$, see the following diagram of exact sequences:
\begin{small}
\begin{equation}
                \begin{tikzpicture}[auto] 
                   \node (22) at (-1, 2) {$\mathcal{U}$}; \node (42) at (3.5, 2) {$\Gamma_{j_1,\,\dots,\,j_r}$}; \node (62) at (8, 2) {$\Gamma_{0,\,\dots,\,0}$};
                    \node (20) at (-1, 0) {$U_{1,\,j_1}\times \cdots \times U_{r,\,j_r}$}; \node (40) at (3.5, 0) {$\Gamma_{M(1,\,j_1)}\times  \cdots \times \Gamma_{M(r,\,j_r)}$}; \node (60) at (8, 0) {$\Gamma_{M(1,\,0)}\times \cdots \times \Gamma_{M(r,\,0)}$};
    
                    \draw[{Hooks[right]}->] (62) to node {}(60);
                    \draw[{Hooks[right]}->] (42) to node {$\iota$}(40);
                    
                    \draw[{Hooks[right]}->] (22) to node {}(42);
                    \draw[{Hooks[right]}->] (20) to node {}(40);
    
                    \draw[->>] (42) to node {$\pi$}(62);
                    \draw[->>] (40) to node {$\Pi$}(60);
    
    
                \end{tikzpicture}.
            \end{equation}
            \end{small}
     The surjectivity of $\mathcal{U}_{j_1,\,\dots,\,j_r} \twoheadrightarrow \mathcal{U}_{j_1^\prime,\,\dots,\,j_r^\prime}$ follows from the diagram of exact sequences
    \begin{equation}
                \begin{tikzpicture}[auto] 
                   \node (22) at (-1, 2) {$\mathcal{U}_{j_1,\,\dots,\,j_r}$}; \node (42) at (3.5, 2) {$\Gamma_{j_1,\,\dots,\,j_r}$}; \node (62) at (8, 2) {$\Gamma_{0,\,\dots,\,0}$};
                    \node (20) at (-1, 0) {$\mathcal{U}_{j_1^\prime,\,\dots,\,j_r^\prime}$}; \node (40) at (3.5, 0) {$\Gamma_{j_1^\prime,\,\dots,\,j_r^\prime}$}; \node (60) at (8, 0) {$\Gamma_{0,\,\dots,\,0}$};
    
                    \draw[-] (62) to node[sloped] {$=$}(60);
                    \draw[->>] (42) to node {}(40);
                    \draw[->] (22) to node {}(20);
                    
                    \draw[{Hooks[right]}->] (22) to node {}(42);
                    \draw[{Hooks[right]}->] (20) to node {}(40);
    
                    \draw[->>] (42) to node {}(62);
                    \draw[->>] (40) to node {}(60);
    
    
                \end{tikzpicture}
            \end{equation}
            where the homomorphisms between $t$-motivic Galois groups are given by Proposition \ref{CorTannakiangeneral}.
 The third assertion follows from the commutativity of Diagram \eqref{wholedirectproductdiagram}.
\end{proof}

\subsection{Direct product decompositions of the $t$-motivic Galois group}\label{sscal}
Using the observations made in previous subsections, we prove that the inclusion in Lemma \ref{Lemmainclusion} is an equality (Theorem \ref{GaloisGroupDirectProd}).
We deduce Theorem \ref{main} from Theorem \ref{GaloisGroupDirectProd} at the end of this subsection.
Let us recall the notation in Equation \eqref{AbbGamma}.

\begin{Theorem}\label{GaloisGroupDirectProd}
Take a finite set $I$ of indices which satisfies Equation \eqref{subclosed} and fix its enumeration $I=\{ \mathbf{s}_1,\,\dots,\,\mathbf{s}_{\# I} \}$ such that $\operatorname{dep}\mathbf{s}_{j^\prime} \leq \operatorname{dep}\mathbf{s}_{j}$ for any $1 \leq j^\prime < j \leq \#I$. 
Let $l_1,\,\dots,\,l_r$ be distinct positive integers and $R$ be their common multiple.
Also take an element $u_{i,\,s}$ of $\overline{K}[t]$ which satisfies the inequality 
\begin{equation}\label{convergencecondition2}
    ||u_{i,\,s}||_\infty<|\theta|_\infty^{\frac{s p^{l_i}}{p^{l_i}-1}}
\end{equation}
for each
$1\leq i \leq r$ and $s \geq 1$.
For any choice of $0 \leq j_1,\,j_2,\,\dots,\,j_r \leq \#I$, the closed immersion 
$
    \Gamma_{j_1,\,j_2,\,\dots,\,j_r} \subset\Gamma_{M(1,\,j_1)}\times \Gamma_{M(2,\,j_2)} \times \cdots \times \Gamma_{M(r,\,j_r)}
$
given by Lemma \ref{Lemmainclusion} is an isomorphism and hence the following equation of algebraic groups holds:
\begin{equation}
    \Gamma_{j_1,\,j_2,\,\dots,\,j_r}=\Gamma_{M(1,\,j_1)}\times \Gamma_{M(2,\,j_2)} \times \cdots \times \Gamma_{M(r,\,j_r)}. \label{mainthmeq}
\end{equation}
\end{Theorem}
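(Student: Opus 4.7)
The plan is to prove the equality by induction on $|J|:=j_1+\cdots+j_r$, closely mirroring the commutator argument from Example \ref{Example1}. For the base case $|J|=0$ every $j_i$ is zero and the claim reduces to $\Gamma_{0,\dots,0}=\mathbb{G}_m^r$, which is Equation \eqref{CarlitzOnly}. For the inductive step, I would relabel so that $j_1\ge 1$ (WLOG). By the inductive hypothesis applied to $(j_1-1,j_2,\dots,j_r)$ one has $\Gamma_{j_1-1,j_2,\dots,j_r}=\Gamma_{M(1,j_1-1)}\times\prod_{i\ge 2}\Gamma_{M(i,j_i)}$. Let $\phi:\Gamma_{j_1,\dots,j_r}\twoheadrightarrow\Gamma_{j_1-1,j_2,\dots,j_r}$ be the faithfully flat morphism of Proposition \ref{CorTannakiangeneral} and set $\mathcal{V}:=\ker\phi$. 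The commutative diagram \eqref{wholedirectproductdiagram} together with Corollary \ref{VinGa} identifies $\mathcal{V}$ with a closed subgroup of $V_{1,j_1}\subset\mathbb{G}_a$. The induction then closes once the equality $\mathcal{V}=V_{1,j_1}$ is established: the short exact sequence $1\to V_{1,j_1}\to\Gamma_{j_1,\dots,j_r}\to\Gamma_{j_1-1,j_2,\dots,j_r}\to 1$ combined with the closed immersion in Lemma \ref{Lemmainclusion} forces \eqref{mainthmeq} by a dimension count.

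If $V_{1,j_1}=\{0\}$ there is nothing to do, so assume $V_{1,j_1}\cong\mathbb{G}_a$. The surjection $\Gamma_{j_1,\dots,j_r}\twoheadrightarrow\Gamma_{0,\dots,0}=\mathbb{G}_m^r$ (base case plus Proposition \ref{CorTannakiangeneral}) provides, for every $\alpha\in\overline{\mathbb{F}_{p^R}(t)}^{\times}$, an element $Q\in\Gamma_{j_1,\dots,j_r}$ whose image in $\mathbb{G}_m^r$ is $(\alpha,1,\dots,1)$. Following the template of Example \ref{Example1}, I would next produce a nontrivial element $R\in\mathcal{V}$ by a secondary diagram chase that exploits the faithful flatness of Tannakian projections together with the explicit shape \eqref{shapeVhat} of $\hat V_{j_1}$. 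A direct matrix computation in the last block $X_{\mathbf{s}_{j_1}}$ of $\hat{\Gamma}_{j_1}$ then shows that the commutator $Q^{-1}RQ\in\mathcal{V}$ has its $x_{\mathbf{s}_{j_1}}$-coordinate equal to $\alpha^{\operatorname{wt}(\mathbf{s}_{j_1})}$ times that of $R$. As $\alpha$ ranges over the infinite set $\overline{\mathbb{F}_{p^R}(t)}^{\times}$ and $\mathcal{V}$ is Zariski-closed in $\mathbb{G}_a$, the additive-polynomial description of closed subgroups of $\mathbb{G}_a$ used in Example \ref{Example1} via \cite[Corollary 1.8]{Conrad2015} forces the defining polynomial to vanish identically, so that $\mathcal{V}=\mathbb{G}_a=V_{1,j_1}$.

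The principal obstacle is producing the nontrivial $R\in\mathcal{V}$ in the case $V_{1,j_1}\cong\mathbb{G}_a$: without such an element, the commutator scaling yields no information. My expectation is that, exactly as in the three-level diagram of Example \ref{Example1}, this is obtained by factoring $\phi$ as a composition through a further tuple and arguing that $\mathcal{V}$ coincides with the preimage under the first arrow of a one-dimensional kernel, which then surjects onto $V_{1,j_1}$ by faithful flatness. Once Theorem \ref{GaloisGroupDirectProd} is in hand, Theorem \ref{main} follows by applying Papanikolas' dimension formula \eqref{Papanikolasthmtrdeg} to both sides of \eqref{mainthmeq}.
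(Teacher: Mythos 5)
Your high-level plan (induction on $j_1+\cdots+j_r$, commutator scaling with an element of $\mathbb{G}_m^r$, the additive-polynomial description of closed subgroups of $\mathbb{G}_a$) is the right template, and your treatment of the base case and the case $V_{1,j_1}=\{0\}$ is fine. The gap is concentrated in the ``secondary diagram chase'' you invoke to produce the nontrivial $R\in\mathcal{V}$, and I believe this gap is real, not merely an omitted detail.

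You define $\mathcal{V}:=\ker\bigl(\Gamma_{j_1,\dots,j_r}\twoheadrightarrow\Gamma_{j_1-1,j_2,\dots,j_r}\bigr)$, a closed subgroup of $V_{1,j_1}\subset\mathbb{G}_a$, by decrementing a single index. When $j_1$ is the only nonzero index (the paper's Proposition \ref{GaloisGroupDirectProd2}), a commutator $w_1w_2w_1^{-1}w_2^{-1}$ with $\overline{\varphi}_{j_1}(w_1)=v_a$ does land in $\mathcal{V}$, because the $\iota$-images of $w_1,w_2$ in the $i\geq 2$ factors lie in the abelian group $\mathbb{G}_m^{r-1}$ and so the commutator dies there. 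But as soon as two indices $j_1,j_2\geq 1$ are in play and both $V_{1,j_1},V_{2,j_2}\cong\mathbb{G}_a$, the same commutator $\iota(w_1w_2w_1^{-1}w_2^{-1})=([v_a,h_1],[g_2,h_2],\dots)$ has an a priori nontrivial component in the non-abelian group $\Gamma_{M(2,j_2)}$, so it is \emph{not} in your $\mathcal{V}\subset V_{1,j_1}\times\{1\}\times\cdots$. Any attempt to correct $w_1$ by an element of $\ker(\overline{\varphi}_{j_1})$ that cancels the extra components requires knowing that $\ker(\overline{\varphi}_{j_1})$ surjects onto $\Gamma_{M(2,j_2)\oplus\cdots\oplus M(r,j_r)}$ — by a Goursat-type analysis this is exactly equivalent to $\Gamma_{j_1,\dots,j_r}$ being the full direct product, i.e.\ the statement being proved. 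Likewise, factoring $\phi$ through $\Gamma_{j_1-1,j_2-1,\dots,j_r}$ yields surjectivity of $\ker(\Gamma\to\Gamma_{j_1-1,j_2-1,\dots})$ onto one factor of $V_{1,j_1}\times V_{2,j_2}$, but that kernel is two-dimensional at most, and knowing it surjects onto one $\mathbb{G}_a$ does not force its intersection with $V_{1,j_1}\times\{1\}$ (which is your $\mathcal{V}$) to be all of $V_{1,j_1}$; the diagonal $\{(x,x)\}\subset\mathbb{G}_a^2$ surjects onto each factor yet has trivial intersection with each axis.

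The paper avoids this by working, in the two-nonzero-index case (Proposition \ref{GaloisGroupDirectProd3}), with the two-step kernel $\mathcal{V}:=\ker\bigl(\mathcal{U}_{j_1,\dots,j_r}\twoheadrightarrow\mathcal{U}_{j_1-1,j_2-1,j_3,\dots,j_r}\bigr)\subset V_{1,j_1}\times V_{2,j_2}\cong\mathbb{G}_a^2$. Lemma \ref{MappedOnto} then uses the induction hypothesis at three predecessor tuples $(j_1-1,j_2,\dots)$, $(j_1,j_2-1,\dots)$, $(j_1-1,j_2-1,\dots)$ and the factorizations of the projection through each intermediate tuple to conclude that $\mathcal{V}$ surjects onto \emph{both} factors. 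Lemma \ref{Lemmaaction} shows $\mathcal{V}$ is stable under the $\mathbb{G}_m^2$-action $(\alpha_1,\alpha_2).(x_1,x_2)=(\alpha_1^{w_1}x_1,\alpha_2^{w_2}x_2)$, and Lemma \ref{LemmaWisWhole} — a genuinely two-dimensional statement about closed subgroups of $\mathbb{G}_a^2$ cut out by $p$-polynomials, where the two scalings can be chosen \emph{independently} — then forces $\mathcal{V}=V_{1,j_1}\times V_{2,j_2}$. None of the surjectivity onto both factors, the two-variable additive polynomial, nor the bi-scaling independence appears in your one-dimensional framing, and I do not see how to recover them without essentially rebuilding the $\mathbb{G}_a^2$ argument.
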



This subsection is devoted to the proof of Theorem \ref{GaloisGroupDirectProd}. Hence, in what follows, we fix a finite set $I$ of indices and its enumeration $I=\{ \mathbf{s}_1,\,\dots,\,\mathbf{s}_{\# I} \}$ satisfying the assumptions of the theorem. 
Distinct positive integers $l_1,\,\dots,\,l_r$, their common multiple $R$, and elements $u_{i,\,s}$ of $\overline{K}[t]$ which satisfies the Inequalities \eqref{convergencecondition2}
 are also taken and fixed for all
$1\leq i \leq r$ and $s \geq 1$.
We prove this theorem by the induction on the sum $ j_1+j_2+\cdots+j_r$.
We have already obtained Equation \eqref{mainthmeq} in the case where $j_1=\cdots=j_r=0$:
\begin{Proposition}\label{GaloisGroupDirectProd0}
    The following equality of algebraic groups holds:
    \begin{equation}
    \Gamma_{0,\,0,\,\dots,\,0}=\Gamma_{M(1,\,0)}\times \Gamma_{M(2,\,0)} \times \cdots \times \Gamma_{M(r,\,0)}.
    \label{mainthmeq0}
\end{equation}
\end{Proposition}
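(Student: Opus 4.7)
The plan is to invoke the calculation already carried out in the example following Lemma \ref{LemmaTannakianProj}, which provides both sides of \eqref{mainthmeq0} simultaneously. By construction $M(i,0)=C_{l_i}^{(R/l_i)}$, so
\begin{equation}
\Gamma_{0,0,\dots,0}=\Gamma_{M(1,0)\oplus\cdots\oplus M(r,0)}=\Gamma_{C_{l_1}^{(R/l_1)}\oplus\cdots\oplus C_{l_r}^{(R/l_r)}}.
\end{equation}
The example preceding this proposition established, via Papanikolas' theorem combined with Denis' algebraic independence of $\tilde{\pi}_{l_1},\dots,\tilde{\pi}_{l_r}$, precisely the equality \eqref{CarlitzOnly}, i.e.\
\begin{equation}
\Gamma_{C_{l_1}^{(R/l_1)}\oplus\cdots\oplus C_{l_r}^{(R/l_r)}}=\mathbb{G}_m^{r}.
\end{equation}

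To finish, I would identify the right-hand side of \eqref{mainthmeq0} with $\mathbb{G}_m^{r}$. The same example (or the earlier Carlitz example) shows that $\Gamma_{M(i,0)}=\Gamma_{C_{l_i}^{(R/l_i)}}=\mathbb{G}_{m/\mathbb{F}_{p^R}(t)}$ for each $i$, using both $\Gamma_P=\Gamma_{P^{(s)}}$ from \eqref{derivedsameGaloisgroup} and the transcendence of $\tilde{\pi}_{l_i}$. Consequently
\begin{equation}
\Gamma_{M(1,0)}\times\Gamma_{M(2,0)}\times\cdots\times\Gamma_{M(r,0)}=\mathbb{G}_m^{r},
\end{equation}
and combining the two displays yields \eqref{mainthmeq0}.

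There is no real obstacle here; Proposition \ref{GaloisGroupDirectProd0} is essentially the base case of the induction to come, and everything required has already been assembled. The only point worth double-checking is that the closed immersion of Lemma \ref{Lemmainclusion} is compatible with the identifications of both sides with subgroups of $\mathbb{G}_m^{r}$, so that one can legitimately conclude equality of algebraic groups (rather than just of dimensions). This compatibility is automatic from the description of $\Gamma_{\Psi}$ as the smallest closed subscheme of the ambient $\operatorname{GL}$ containing $\widetilde{\Psi}$, applied to $\Psi=(\Omega_{l_1})\oplus\cdots\oplus(\Omega_{l_r})$, exactly as was spelled out in the example.
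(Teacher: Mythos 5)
Your proposal is correct and follows the same route as the paper: the paper's own proof is a one-line reference to Equation \eqref{CarlitzOnly}, which was established in the preceding example via Denis' theorem on the algebraic independence of $\tilde{\pi}_{l_1},\dots,\tilde{\pi}_{l_r}$, and the identifications $\Gamma_{M(i,0)}=\Gamma_{C_{l_i}^{(R/l_i)}}=\mathbb{G}_m$ come from the same discussion. Your extra remark about compatibility of the closed immersion with the embeddings into $\mathbb{G}_m^r$ is a reasonable point to flag, and it is indeed automatic since both sides sit inside $\mathbb{G}_m^r$ via the smallest-closed-subscheme description of $\Gamma_\Psi$.
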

This can be seen as a paraphrase of Denis' theorem~(\cite{Denis1998}),
see Equation \eqref{CarlitzOnly}.

\subsubsection{The case of $\dim \Gamma_{M(i,\,j_i)}= \dim\Gamma_{M(i,\,j_i-1)} $ for some $ 1\leq i \leq r$.}

First, we deal with the simple case where $\dim \Gamma_{M(i,\,j_i)}= \dim\Gamma_{M(i,\,j_i-1)} $ for some $ 1\leq i \leq r$. Without loss of generality, we may assume that $i=1$.
The induction hypothesis implies the following equation: 
\begin{equation}
    \Gamma_{j_1-1,\,j_2,\,\dots,\,j_r}=\Gamma_{M(1,\,j_1-1)}\times \Gamma_{M(2,\,j_2)} \times \cdots \times \Gamma_{M(r,\,j_r)}.\label{inductionhyp}
\end{equation}

\begin{Proposition}\label{Proptrivialcase}
    Take $1 \leq j_1 \leq \#I$ and $0 \leq j_2,\,\dots,\,j_r \leq \#I$. Suppose that we have  $\dim \Gamma_{M(1,\,j_1)}= \dim\Gamma_{M(1,\,j_1-1)} $ and Equation \eqref{inductionhyp}.
Then we also obtain
\begin{equation} 
    \Gamma_{j_1,\,\dots,\,j_r}=\Gamma_{M(1,\,j_1)}\times \Gamma_{M(2,\,j_2)} \times \cdots \times \Gamma_{M(r,\,j_r)}.
\end{equation}
    
\end{Proposition}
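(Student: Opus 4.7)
The plan is to establish the proposition by a dimension count, combined with the fact that all the algebraic groups involved are geometrically integral. By Lemma \ref{Lemmainclusion} we have the closed immersion
\begin{equation}
    \Gamma_{j_1,\,j_2,\,\dots,\,j_r} \hookrightarrow \Gamma_{M(1,\,j_1)}\times \Gamma_{M(2,\,j_2)}\times \cdots \times \Gamma_{M(r,\,j_r)},
\end{equation}
which gives the upper bound $\dim \Gamma_{j_1,\,\dots,\,j_r}\leq \sum_{i=1}^{r}\dim \Gamma_{M(i,\,j_i)}$. The goal is to produce the matching lower bound using the induction hypothesis \eqref{inductionhyp} together with the assumption $\dim \Gamma_{M(1,\,j_1)}=\dim \Gamma_{M(1,\,j_1-1)}$.

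For the lower bound, I would invoke Proposition \ref{CorTannakiangeneral} to obtain the faithfully flat morphism $\Gamma_{j_1,\,j_2,\,\dots,\,j_r}\twoheadrightarrow \Gamma_{j_1-1,\,j_2,\,\dots,\,j_r}$. Combining this with the induction hypothesis and the equality $\dim \Gamma_{M(1,\,j_1)}=\dim \Gamma_{M(1,\,j_1-1)}$ yields
\begin{equation}
    \dim \Gamma_{j_1,\,\dots,\,j_r} \geq \dim \Gamma_{j_1-1,\,j_2,\,\dots,\,j_r} = \dim \Gamma_{M(1,\,j_1-1)}+\sum_{i=2}^{r}\dim \Gamma_{M(i,\,j_i)} = \sum_{i=1}^{r}\dim \Gamma_{M(i,\,j_i)},
\end{equation}
so combined with the upper bound both sides of the inclusion have the same dimension.

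To finish, I would note that by Theorem \ref{ThmPapanikolas} each $\Gamma_{M(i,\,j_i)}$ is geometrically smooth and geometrically connected; the same holds for the source $\Gamma_{j_1,\,\dots,\,j_r}$. Hence both ends of the closed immersion in the first display are geometrically integral varieties of the same dimension, and a closed immersion of geometrically integral schemes of the same dimension must be an isomorphism. This yields the desired identification \eqref{mainthmeq} in this case. There is no real obstacle here: the proposition is the degenerate situation in the inductive step, where attaching the new datum $M_{l_1}[\mathbf{s}_{j_1}]$ to $M(1,\,j_1-1)$ contributes no new transcendence, so the direct-product decomposition is inherited for free from the induction hypothesis. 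The substantive work of the induction will be in the complementary case $\dim \Gamma_{M(1,\,j_1)}=\dim \Gamma_{M(1,\,j_1-1)}+1$, handled separately.
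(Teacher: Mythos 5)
Your proof is correct and essentially reproduces the paper's own argument: use the closed immersion from Lemma \ref{Lemmainclusion} for the upper bound, the faithfully flat morphism from Proposition \ref{CorTannakiangeneral} together with the induction hypothesis and the dimension equality for the lower bound, then conclude by smoothness and connectedness from Theorem \ref{ThmPapanikolas}. The only cosmetic difference is the order in which you present the two bounds.
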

\begin{proof}

Because of Equation \eqref{inductionhyp} and the closed immersion \eqref{mainimmersion}, we have
\begin{align}
\dim \Gamma_{j_1,\,j_2,\,\dots,\,j_r} & \leq \dim \left(\Gamma_{M(1,\,j_1)}\times \Gamma_{M(2,\,j_2)} \times \cdots \times \Gamma_{M(r,\,j_r)}\right)\\
&=\dim\left(\Gamma_{M(1,\,j_1-1)}\times \Gamma_{M(2,\,j_2)} \times \cdots \times \Gamma_{M(r,\,j_r)} \right)\\
&=\dim \Gamma_{j_1-1,\,j_2,\,\dots,\,j_r}.
\end{align}
By Proposition \ref{CorTannakiangeneral}, we have faithfully flat morphism
$
\overline{\psi}:\Gamma_{j_1,\,j_2,\,\dots,\,j_r}$ $\twoheadrightarrow \Gamma_{j_1-1,\,j_2,\,\dots,\,j_r}\label{eqmorphosi}
$
of algebraic groups, which yields the inequality 
$
    \dim \Gamma_{j_1,\,j_2,\,\dots,\,j_r} \geq \dim \Gamma_{j_1-1,\,j_2,\,\dots,\,j_r},
$
hence we have
$
\dim \Gamma_{j_1,\,\dots,\,j_r} = \dim \big(\Gamma_{M(1,\,j_1)} \times  \Gamma_{M(2,\,j_2)} \times \cdots$  $\times \Gamma_{M(r,\,j_r)}\big).
$
As algebraic groups $\Gamma_{M(1,\,j_1)},\, \dots,\,\Gamma_{M(r-1,\,j_{r-1})},$ and $\Gamma_{M(r,\,j_r)}$ are connected and smooth over $\overline{\mathbb{F}_q(t)}$ (see Theorem \ref{ThmPapanikolas}), so is the direct product $\Gamma_{M(1,\,j_1)}\times \Gamma_{M(2,\,j_2)} \times \cdots \times \Gamma_{M(r,\,j_r)}$, and hence we can conclude that the closed immersion \eqref{mainimmersion} is an isomorphism.
\end{proof}

\subsubsection{The case where $j_i \geq 1$ for some $1 \leq i \leq r$
 and $j_1=\cdots=j_{i-1}=j_{i+1}=\cdots=j_r=0$}

Second, we consider the case where we have $1 \leq i \leq r$ such that $j_i \geq 1$ and $j_1=\cdots=j_{i-1}=j_{i+1}=\cdots=j_r=0$ with $\dim \Gamma_{M(i,\,j_i)}>\Gamma_{M(i,\,j_i-1)}$. 
Then we further have 
 \begin{equation}
    \dim \Gamma_{M(i,\,j_i)}= \dim\Gamma_{M(i,\,j_i-1)} +1,\label{assumptiondimgamma}
\end{equation}
see Formula \eqref{successor}.
We may assume $i=1$ without loss of generality.

For $0 \leq j \leq \#I$, Corollary \ref{CorTannakiangeneral} yields a faithfully flat morphism
\begin{equation}
\pi_j:\Gamma_{j,\,0,\,\dots,\,0}\twoheadrightarrow \Gamma_{0,\,\dots,\,0}\simeq \mathbb{G}_m^r
\end{equation}
(see also Equation \eqref{AbbGamma}) and we simply write $\mathcal{U}_j$ for the kernel of $\pi_j$, which we write as $\mathcal{U}_{j,\,0,\,\dots,\,0}$ in Corollary \ref{LemmaTannakianprojuniradical}. 
As $M(1,\,j)$ is a direct summand of $M(1,\,j)\oplus M(2,\,0)\oplus\cdots\oplus M(r,\,0)$,  Tannakian duality also yields the faithfully flat morphism
\begin{equation}
    \overline{\varphi}_j:\Gamma_{j,\,0,\,\dots,\,0}\twoheadrightarrow \Gamma_{M(1,\,j)},
\end{equation}
which fits into the following commutative diagram (Lemma \ref{LemmaTannakianProj}):
\begin{equation}
\begin{tikzpicture}[auto]
\node (03) at (0, 2) {$\Gamma_{j,\,0,\,\dots,\,0}$}; \node (33) at (5, 2) {$\Gamma_{M(1,\,j)}\times\Gamma_{M(2,\,0)}\times \cdots \times \Gamma_{M(r,\,0)}$}; 
\node (30) at (5, 0) {$\Gamma_{M(1,\,j_1)}$.}; 
\draw[{Hooks[right]}->] (03) to node {$\iota$}(33);
\draw[->>] (03) to node {} (30);
\draw[-] (30) to node {$\overline{\varphi}_{j_1}$} (03);
\draw[->>] (33) to node {$\operatorname{pr}_1$} (30);
\end{tikzpicture}
\end{equation}
Here, the immersion $\iota$ is that given by Lemma \ref{Lemmainclusion}. 
The restriction $\varphi_j:=\overline{\varphi}_j|_{\mathcal{U}_j}$ has the image contained in $U_{1,\,j}$ (Definition \ref{DefUandV}) by the commutative diagram
\begin{equation}\label{diagramcase2}
                \begin{tikzpicture}[auto]

    \node (1-3) at (-1, 2) {$1$}; \node (22) at (1, 2) {$\mathcal{U}_j$}; \node (42) at (4, 2) {$\Gamma_{j,\,0,\,\dots,\,0}$}; \node (62) at (7.5, 2) {$\mathbb{G}_m^r$};\node (82) at (9.5, 2) {$1$};
    \node (1-5) at (-1, 0) {$1$}; \node (20) at (1, 0) {$U_{1,\,j}$}; \node (40) at (4, 0) {$\Gamma_{M(1,\,j)} $}; \node (60) at (7.5, 0) {$\mathbb{G}_m$};\node (80) at (9.5, 0) {$1$};
    \draw[->>] (62) to node {$\operatorname{pr}_1 $}(60);
    
    \draw[->>] (42) to node {$\overline{\varphi}_j$}(40);
    \draw[->] (22) to node {$ \varphi_{j}$}(20);

    \draw[{Hooks[right]}->] (22) to node {}(42);
    \draw[{Hooks[right]}->] (20) to node {}(40);

    \draw[->>] (42) to node {$\pi_j$}(62);
    \draw[->>] (40) to node {}(60);

    \draw[->] (1-3) to node {}(22);
    \draw[->] (1-5) to node {}(20);
    
    \draw[->] (62) to node {}(82);
    \draw[->] (60) to node {}(80);
    \end{tikzpicture}
            \end{equation}
which holds for each $0 \leq j \leq \#I$. Here, the surjection $\Gamma_{M(1,\,j)}\twoheadrightarrow \mathbb{G}_m\simeq \Gamma_C$ is the homomorphism given in Lemma \ref{LemmaTannakianProjfixedl}. 
We note that $\varphi_j$ is faithfully flat if $\Gamma_{j,\,0,\,\dots,\,0}=\Gamma_{M(1,\,,j)}\times \Gamma_{M(2,\,0)}\times \cdots \times \Gamma_{M(r,\,0)}$ holds.
\begin{Lemma}\label{Lemmacase2surj}
    Take $j_1 \geq 1$ and assume Equation \eqref{assumptiondimgamma} holds with $i=1$. Then the equality
    \begin{equation}\label{inductionhypcase2}
    \Gamma_{j_1-1,\,0,\,\dots,\,0} =\Gamma_{M(1,\,j_1-1)}\times \Gamma_{M(2,\,0)} \times \cdots \times \Gamma_{M(r,\,0)}
\end{equation}
implies that the restriction $\varphi_{j_1}=\overline{\varphi}_{j_1}|_{\mathcal{U}_{j_1}}$ in Diagram \eqref{diagramcase2} is surjective onto $U_{1,\,j_1}$.
\end{Lemma}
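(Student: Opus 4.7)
The plan is to prove the surjectivity of $\varphi_{j_1}$ by exhibiting the cokernel $U_{1,j_1}/\varphi_{j_1}(\mathcal{U}_{j_1})$ as being simultaneously a torus and a unipotent algebraic group over $\overline{\mathbb{F}_{p^R}(t)}$, and therefore trivial.

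First, I would introduce the intermediate closed subgroup
\[
H:=\pi_{j_1}^{-1}\bigl(\{e\}\times\mathbb{G}_m^{r-1}\bigr)\subset\Gamma_{j_1,0,\dots,0},
\]
obtained as the preimage under $\pi_{j_1}$ of the kernel of the first coordinate projection $\mathrm{pr}_1:\mathbb{G}_m^r\twoheadrightarrow\mathbb{G}_m$. The commutativity of the right-hand square in Diagram \eqref{diagramcase2} immediately yields the set-theoretic identity $H=\overline{\varphi}_{j_1}^{-1}(U_{1,j_1})$, and since $\overline{\varphi}_{j_1}$ is surjective, its restriction $\overline{\varphi}_{j_1}|_H:H\twoheadrightarrow U_{1,j_1}$ is surjective as well. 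Moreover $\mathcal{U}_{j_1}=\ker\pi_{j_1}$ is normal in $H$, and the restriction of $\pi_{j_1}$ identifies $H/\mathcal{U}_{j_1}\simeq\mathbb{G}_m^{r-1}$.

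Second, since $\varphi_{j_1}(\mathcal{U}_{j_1})$ is normal in $U_{1,j_1}$ (being the image of a normal subgroup under a surjective homomorphism), the surjection $\overline{\varphi}_{j_1}|_H$ descends to a surjection
\[
\mathbb{G}_m^{r-1}\simeq H/\mathcal{U}_{j_1}\twoheadrightarrow U_{1,j_1}/\varphi_{j_1}(\mathcal{U}_{j_1}),
\]
so the cokernel is a torus, being a homomorphic image of one. On the other hand, by Lemma \ref{lemmaexolicit}, $U_{1,j_1}$ embeds as a closed subgroup of $\hat{U}_{j_1}$, and from the explicit description of $\hat{\Gamma}_{j_1}$ in Section \ref{eachlevel} the subgroup $\hat{U}_{j_1}$ consists of block-diagonal lower triangular matrices with ones on the diagonal; hence $\hat{U}_{j_1}$ is unipotent, and so are $U_{1,j_1}$ and its quotient $U_{1,j_1}/\varphi_{j_1}(\mathcal{U}_{j_1})$. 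Over an algebraically closed field a group that is both a torus and unipotent must be trivial, which yields $\varphi_{j_1}(\mathcal{U}_{j_1})=U_{1,j_1}$ as desired.

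The main point requiring careful verification is the identity $H=\overline{\varphi}_{j_1}^{-1}(U_{1,j_1})$, which comes from tracing the commutativity of Diagram \eqref{diagramcase2} together with the compatibility $\mathrm{pr}_1\circ\pi_{j_1}=(\Gamma_{M(1,j_1)}\twoheadrightarrow\mathbb{G}_m)\circ\overline{\varphi}_{j_1}$; once this identification is in hand, the rest reduces to routine structural facts about tori and unipotent groups.
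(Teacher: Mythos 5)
Your proof is correct, and it is genuinely different from the paper's. The paper's argument is more hands-on: it first invokes the hypothesis \eqref{assumptiondimgamma} to identify $V_{1,\,j_1}\simeq\mathbb{G}_a$, then computes an explicit commutator $w_1w_2w_1^{-1}w_2^{-1}\in\mathcal{U}_{j_1}$ whose image under $\overline{\varphi}_{j_1}$ is $v_{a(1-b^{-\operatorname{wt}(\mathbf{s}_{j_1})})}$, deduces $V_{1,\,j_1}\subset\varphi_{j_1}(\mathcal{U}_{j_1})$, then uses the induction hypothesis \eqref{inductionhypcase2} to get $\varphi_{j_1-1}$ surjective onto $U_{1,\,j_1-1}$, and finally applies the correspondence theorem. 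Your argument replaces all of this by a structural observation: the commutativity of the right square of Diagram \eqref{diagramcase2} yields $H:=\pi_{j_1}^{-1}(\{e\}\times\mathbb{G}_m^{r-1})=\overline{\varphi}_{j_1}^{-1}(U_{1,\,j_1})$, so the cokernel $U_{1,\,j_1}/\varphi_{j_1}(\mathcal{U}_{j_1})$ is simultaneously a quotient of the torus $H/\mathcal{U}_{j_1}\simeq\mathbb{G}_m^{r-1}$ and a quotient of the unipotent group $U_{1,\,j_1}\subset\hat{U}_{j_1}$, hence trivial. (Each step is fine: the identification of $H$ follows from $U_{1,\,j_1}=\ker(\Gamma_{M(1,\,j_1)}\to\mathbb{G}_m)$ and $\operatorname{pr}_1\circ\pi_{j_1}=(\Gamma_{M(1,\,j_1)}\to\mathbb{G}_m)\circ\overline{\varphi}_{j_1}$; the image of an algebraic-group morphism is a closed subgroup, so the quotient makes sense; quotients of tori are tori and quotients of unipotent groups are unipotent over $\overline{\mathbb{F}_q(t)}$.) A noteworthy by-product of your route is that it does not actually use either of the lemma's stated hypotheses \eqref{assumptiondimgamma} or \eqref{inductionhypcase2} --- the surjectivity of $\varphi_{j_1}$ onto $U_{1,\,j_1}$ holds unconditionally --- so you have in fact proved a slightly stronger statement. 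The paper's commutator calculation is not wasted in the broader argument, since essentially the same computation reappears in the proof of Lemma \ref{Lemmaaction}, but for this particular lemma your torus-versus-unipotent dichotomy is the more economical route.
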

\begin{proof}
    We consider the algebraic subgroup $V_{1,\,j_1}$ of $U_{1,\,j_1}$, see Definition \ref{DefUandV}. As we assumed that Equation \eqref{assumptiondimgamma} holds, we have 
    \begin{equation}
        \dim V_{1,\,j_1}=\dim U_{1,\,j_1}-\dim U_{1,\,j_1-1}=\dim \Gamma_{M(1,\,j_1)}-\dim \Gamma_{M(1,\,j_1-1)}=1
    \end{equation}
    and hence we have $ V_{1,\,j_1} \simeq \mathbb{G}_a$ by Lemma \ref{VinGa}. Let us take arbitrary $a \in \overline{\mathbb{F}_q(t)}$ and consider the corresponding element
    \begin{equation}
        v_{a}:=
    (1)\oplus I_{\operatorname{dep}(\mathbf{s}_1)+1} \oplus \cdots \oplus I_{\operatorname{dep}(\mathbf{s}_{j_1-1})+1} \oplus 
    \begin{pmatrix}
        1&0 & \cdots&\cdots& 0 \\
        0& 1&\ddots&&\vdots\\
        \vdots&0&\ddots&\ddots&\vdots\\
        0&\vdots&\ddots&\ddots&0\\
        a& 0&\cdots&0&1
    \end{pmatrix}
    \end{equation}
    of $V_{1,\,j_1}(\overline{\mathbb{F}_q(t)})$. By surjectivity of the homomorphism $\overline{\varphi}_{j_1}$, we can pick $$w_1 \in \Gamma_{j_1,\,0,\,\dots,\,0}(\overline{\mathbb{F}_q(t)})$$ such that $\overline{\varphi}_{j_1}(w_1)=v_a$. 
    We further take an arbitrary $b \in \overline{\mathbb{F}_q(t)}^\times =\mathbb{G}_m(\overline{\mathbb{F}_q(t)})$ and 
    $$w_2 \in \Gamma_{j_1,\,0,\,\dots,\,0}(\overline{\mathbb{F}_q(t)})$$
    which is mapped to $b$ by the composition $\operatorname{pr}_1 \circ \pi_{j_1}$, see Diagram \eqref{diagramcase2}. 
    Then the commutativity of the group $\mathbb{G}_m^r$ shows that the commutator $w_1 w_2 w_1^{-1} w_2^{-1}$ goes to the identity element via $\pi_{j_1}$ and hence we can conclude that $w_1 w_2 w_1^{-1} w_2^{-1}$ is an element of the kernel $\mathcal{U}_{j_1}$ of $\pi_{j_1}$. We also notice that the image $\overline{\varphi}_{j_1}(w_1 w_2 w_1^{-1} w_2^{-1})$ of the commutator is $v_{a(1-b^{-\operatorname{wt}(\mathbf{s}_{j_1})})}$ given by
    \begin{equation}
        (1)\oplus I_{\operatorname{dep}(\mathbf{s}_1)+1} \oplus \cdots \oplus I_{\operatorname{dep}(\mathbf{s}_{j_1-1})+1} \oplus 
    \begin{pmatrix}
        1&0 & \cdots&\cdots& 0 \\
        0& 1&&&\vdots\\
        \vdots&0&\ddots&&\vdots\\
        0&\vdots&&\ddots&0\\
        a(1-b^{-\operatorname{wt}(\mathbf{s}_{j_1})})& 0&\cdots&0&1
    \end{pmatrix},
    \end{equation}
     the element of $V_{1,\,j_1}(\overline{\mathbb{F}_q(t)})$ corresponding to $a(1-b^{-\operatorname{wt}(\mathbf{s}_{j_1})}) \in \overline{\mathbb{F}_q(t)}$ via the idenfication $V_{1,\,j_1} \simeq \mathbb{G}_a$. 
     As elements $a$ and $b$ are arbitrarily chosen, we can conclude that the image $\varphi_{j_1}(\mathcal{U}_{j_1})$ contains $V_{1,\,j_1}$.

    On the other hand, we have the commutative diagram
    \begin{equation}
\begin{tikzpicture}[auto]
\node (03) at (0, 2) {$\mathcal{U}_{j_1}$}; \node (33) at (3, 2) {$\mathcal{U}_{j_1-1}$}; 
\node (00) at (0, 0) {$U_{1,\,j_1}$}; \node (30) at (3, 0) {$U_{1,\,j_1-1}$,}; 
\draw[->>] (03) to node {}(33);
\draw[->>] (00) to node {}(30);
\draw[->] (03) to node {$\varphi_{j_1}$} (00);
\draw[->] (33) to node {$\varphi_{j_1-1}$} (30);
\end{tikzpicture}
\end{equation}
    and we can prove the surjectivity of $\varphi_{j_1-1}$ as we have Equation \eqref{inductionhypcase2}. Hence the subgroup $\varphi_{j_1}(\mathcal{U}_{j_1})$ of $U_{1,\,j_1}$ is mapped onto $U_{1,\,j_1}$. As $\varphi_{j_1}(\mathcal{U}_{j_1})$ contains the kernel $V_{1,\,j_1}$ of the surjection $U_{1,\,j_1} \twoheadrightarrow U_{1,\,j_1-1} $ we can conclude that $\varphi_{j_1}(\mathcal{U}_{j_1})=U_{1,\,j_1}$ by the correspondence theorem.
\end{proof}

Now we are ready to prove the following desired result:
\begin{Proposition}\label{GaloisGroupDirectProd2}
     If we have Equations \eqref{assumptiondimgamma} with $i=1$ and \eqref{inductionhypcase2} for some $j_1 \geq 1$, then
     \begin{equation}
    \Gamma_{j_1,\,0,\,\dots,\,0} =\Gamma_{M(1,\,j_1)}\times \Gamma_{M(2,\,0)} \times \cdots \times \Gamma_{M(r,\,0)}.
\end{equation}
\end{Proposition}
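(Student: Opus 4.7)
The plan is to compare dimensions on the two sides of the closed immersion
\begin{equation}
\iota\colon \Gamma_{j_1,\,0,\,\dots,\,0} \hookrightarrow \Gamma_{M(1,\,j_1)}\times \Gamma_{M(2,\,0)}\times\cdots\times\Gamma_{M(r,\,0)}
\end{equation}
from Lemma \ref{Lemmainclusion}. Since the target is geometrically smooth and geometrically connected (as a product of algebraic groups appearing in Theorem \ref{ThmPapanikolas}), equality of dimensions forces $\iota$ to be an isomorphism.

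First I would reduce the unipotent piece to a single factor. By Definition \ref{DefUandV}, $U_{i,\,0}$ is the kernel of the identity map $\Gamma_{M(i,\,0)}\twoheadrightarrow \Gamma_{M(i,\,0)}$, hence trivial for each $i$. Combining this with the embedding $\mathcal{U}_{j_1}\hookrightarrow U_{1,\,j_1}\times U_{2,\,0}\times\cdots\times U_{r,\,0}$ of Corollary \ref{LemmaTannakianprojuniradical}(1) yields an embedding $\mathcal{U}_{j_1}\hookrightarrow U_{1,\,j_1}$. On the other hand, the hypotheses of Lemma \ref{Lemmacase2surj} are satisfied (Equation \eqref{assumptiondimgamma} with $i=1$ and the induction hypothesis \eqref{inductionhypcase2}), so the restriction $\varphi_{j_1}=\overline{\varphi}_{j_1}|_{\mathcal{U}_{j_1}}\colon \mathcal{U}_{j_1}\twoheadrightarrow U_{1,\,j_1}$ is surjective. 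The composition of an embedding and a surjection between the two groups gives $\mathcal{U}_{j_1}\simeq U_{1,\,j_1}$.

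Next I would exploit the exact sequence
\begin{equation}
1\to \mathcal{U}_{j_1}\to \Gamma_{j_1,\,0,\,\dots,\,0}\xrightarrow{\pi_{j_1}}\Gamma_{0,\,\dots,\,0}\to 1
\end{equation}
from \eqref{DefcalU}. By Proposition \ref{GaloisGroupDirectProd0} (a restatement of Denis' theorem via \eqref{CarlitzOnly}), $\Gamma_{0,\,\dots,\,0}=\mathbb{G}_m^r$, so
\begin{equation}
\dim \Gamma_{j_1,\,0,\,\dots,\,0}=\dim\mathcal{U}_{j_1}+r=\dim U_{1,\,j_1}+r.
\end{equation}
Using the defining exact sequence $1\to U_{1,\,j_1}\to \Gamma_{M(1,\,j_1)}\to \Gamma_{C_{l_1}^{(R/l_1)}}\to 1$ from Definition \ref{DefUandV}, together with $\Gamma_{M(1,\,0)}=\mathbb{G}_m$, we get $\dim U_{1,\,j_1}=\dim\Gamma_{M(1,\,j_1)}-1$, and therefore
\begin{equation}
\dim \Gamma_{j_1,\,0,\,\dots,\,0}=\dim\Gamma_{M(1,\,j_1)}+(r-1)=\dim\bigl(\Gamma_{M(1,\,j_1)}\times \Gamma_{M(2,\,0)}\times\cdots\times\Gamma_{M(r,\,0)}\bigr),
\end{equation}
since each $\Gamma_{M(i,\,0)}=\mathbb{G}_m$. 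As noted above, this equality of dimensions combined with smoothness and connectedness of the target forces $\iota$ to be an isomorphism, which is the desired conclusion.

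I do not expect a major obstacle here beyond what is already packaged into Lemma \ref{Lemmacase2surj}; that lemma did the real work (producing the commutator $v_{a(1-b^{-\mathrm{wt}(\mathbf{s}_{j_1})})}$ to generate $V_{1,\,j_1}$, then invoking the induction hypothesis on $j_1-1$ to lift to all of $U_{1,\,j_1}$). After it, the argument is just bookkeeping with dimensions in the two short exact sequences for $\Gamma_{j_1,\,0,\,\dots,\,0}$ and $\Gamma_{M(1,\,j_1)}$, and invoking smoothness/connectedness from Theorem \ref{ThmPapanikolas}.
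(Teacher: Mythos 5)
Your proof is correct and follows essentially the same route as the paper: invoke Lemma~\ref{Lemmacase2surj} for surjectivity of $\varphi_{j_1}\colon\mathcal{U}_{j_1}\to U_{1,\,j_1}$, compute dimensions via the exact sequence $1\to\mathcal{U}_{j_1}\to\Gamma_{j_1,\,0,\,\dots,\,0}\to\mathbb{G}_m^r\to1$ together with $\dim U_{1,\,j_1}=\dim\Gamma_{M(1,\,j_1)}-1$, and conclude by smoothness and connectedness of the target product. The only deviation is that you also argue $\mathcal{U}_{j_1}\simeq U_{1,\,j_1}$ (combining the embedding from Corollary~\ref{LemmaTannakianprojuniradical}(1), which here is the same map as $\varphi_{j_1}$ since the $U_{i,\,0}$ are trivial, with the surjectivity), whereas the paper only extracts and needs the one-sided bound $\dim\mathcal{U}_{j_1}\geq\dim U_{1,\,j_1}$ from the surjection.
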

\begin{proof}
The upper exact sequence in Diagram \eqref{diagramcase2} shows that
\begin{equation}
    \dim \Gamma_{j_1,\,0,\,\dots,\,0}=\dim \mathcal{U}_{j_1}+\dim \mathbb{G}_m^r=\dim \mathcal{U}_{j_1}+r.
\end{equation}
As we assumed that Equations \eqref{assumptiondimgamma} and \eqref{inductionhypcase2} hold, we can use Lemma \ref{Lemmacase2surj} to obtain $\dim \mathcal{U}_{j_1} \geq \dim U_{1,\,j_1}=\dim \Gamma_{M(1,\,j_1)}-1$, hence we have
\begin{align}
    \dim \Gamma_{j_1,\,0,\,\dots,\,0}&\geq \dim \Gamma_{M(1,\,j_1)}-1+r\\
    &=\dim (\Gamma_{M(1,\,j_1)}\times \Gamma_{M(2,\,0)}\times \cdots \times \Gamma_{M(r,\,0)}).
\end{align}
Consequently we can see that the closed immersion $\Gamma_{j_1,\,0,\,\dots,\,0} \hookrightarrow \Gamma_{M(1,\,j_1)}\times \Gamma_{M(2,\,0)}\times \cdots \times \Gamma_{M(r,\,0)}$ in Lemma \ref{Lemmainclusion} is an isomorphism as the latter group is a direct product of smooth and connected algebraic groups (Theorem \ref{ThmPapanikolas}).
\end{proof}

\subsubsection{The case where $j_i,\,j_{i^\prime} \geq 1$ for some $1\leq i < i^\prime \leq r$}

Third, we verify Equation \eqref{mainthmeq} in the the case where $j_i,\,j_{i^\prime} \geq 1$ for some $1\leq i < i^\prime \leq r$. Ideas in the arguments here are similar to those in Example \ref{Example1}.
We can assume that $i=1$ and $i^\prime =2$ without loss of generality.

Let us recall the notation in Equations \eqref{AbbGamma} and \eqref{DefcalU}.
By the induction hypothesis, we may assume
    \begin{equation} 
        \Gamma_{ j_1^\prime,\,j_2^\prime,\,j_3\dots,\,j_r}
        =\Gamma_{M(1,\,j_1^\prime)}\times \Gamma_{M(2,\,j_2^\prime)} \times  \Gamma_{M(3,\,j_3)} \times \cdots \times \Gamma_{M(r,\,j_r)}
    \end{equation}    
and hence we have
\begin{equation}\label{inductionhypcase3unipotentpart}
    \mathcal{U}_{j_1^\prime,\,j_2^\prime,\,j_3,\,\dots,\,j_r}=U_{1,\,j_1^\prime}\times U_{2,\,j_2^\prime}\times U_{3,\,j_3}\times \cdots \times U_{r,\,j_r}
\end{equation}
(see Definition \ref{DefUandV} and Corollary \ref{LemmaTannakianprojuniradical}) for $(j_1^\prime,\,j_2^\prime)=(j_1-1,\,j_2),\,(j_1,\,j_2-1)$, and $(j_1-1,\,j_2-1)$.

Considering the direct product of the surjections given in Lemma \ref{LemmaTannakianProjfixedl}, we have a surjection
\begin{small}
\begin{align}
    \Gamma_{M(1,\,j_1)}\times  \cdots \times \Gamma_{M(r,\,j_r)}
    &\twoheadrightarrow
    \Gamma_{M(1,\,j_1-1)}\times \Gamma_{M(2,\,j_2-1)}\times \Gamma_{M(3,\,j_3)}\times \cdots \times\Gamma_{M(r,\,j_r)}
\end{align}
\end{small}
which yields the surjective homomorphism
\begin{equation}
    U_{1,\,j_1}\times \cdots \times U_{r,\,j_r}\twoheadrightarrow U_{1,\,j_1-1}\times U_{2,\,j_2-1}\times U_{3,\,j_3}\cdots \times U_{r,\,j_r},
\end{equation}
whose kernel is $V_{1,\,j_1}\times V_{2,\,j_2}$, see Definition \ref{DefUandV}. 
If we let $\mathcal{V}$ be the kernel of the surjection $\mathcal{U} \twoheadrightarrow \mathcal{U}_{j_1-1,\,j_2-1,\,j_3,\,\dots,\,j_r}$ given by Corollary \ref{LemmaTannakianprojuniradical}, then $\mathcal{V}$ can be immersed to $V_{1,\,j_1}\times V_{2,\,j_2}$ via the immersion $\mathcal{U} \hookrightarrow U_{1,\,j_1} \times \cdots \times U_{r,\,j_r}$ in Corollary \ref{LemmaTannakianprojuniradical}, see the following diagram: 
\begin{small}
\begin{equation}    \label{upperrectangle}
\begin{tikzpicture}[auto] 
                   \node (22) at (-0.4, 2) {$\mathcal{V}$}; \node (42) at (3, 2) {$\mathcal{U}$}; \node (62) at (8, 2) {$\mathcal{U}_{j_1-1,\,j_2-1,\,j_3,\,\dots,\,j_r}$};
                    \node (20) at (-0.4, 0) {$V_{1,\,j_1}\times V_{2,\,j_2}$}; \node (40) at (3, 0) {$U_{1,\,j_1}\times \cdots \times U_{r,\,j_r}$}; \node (60) at (8, 0) {$U_{1,\,j_1-1}\times U_{2,\,j_2-1}\times U_{3,\,j_3}\times \cdots \times U_{r,\,j_r}$};
    
                    \draw[-] (62) to node[sloped] {$=$}(60);
                    \draw[{Hooks[right]}->] (42) to node {}(40);
                    \draw[{Hooks[right]}->] (22) to node {}(20);
                    
                    \draw[{Hooks[right]}->] (22) to node {}(42);
                    \draw[{Hooks[right]}->] (20) to node {}(40);
    
                    \draw[->>] (42) to node {}(62);
                    \draw[->>] (40) to node {}(60);
    
    
                \end{tikzpicture}.
            \end{equation}
\end{small}

\begin{Lemma}\label{MappedOnto}
    Assume that Equation \eqref{inductionhypcase3unipotentpart} holds for $(j_1^\prime,\,j_2^\prime)=(j_1-1,\,j_2),\,(j_1,\,j_2-1)$, and $(j_1-1,\,j_2-1)$. Then the restriction of the projection $\operatorname{pr}_i:V_{1,\,j_1}\times V_{2,\,j_2} \twoheadrightarrow V_{i,\,j_i}$ to the algebraic subgroup $\mathcal{V}$ is surjective for $i=1,\,2$.
\end{Lemma}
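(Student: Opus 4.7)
By symmetry the cases $i=1$ and $i=2$ are proved identically, so I focus on showing that $\operatorname{pr}_1|_{\mathcal{V}} : \mathcal{V} \twoheadrightarrow V_{1, j_1}$ is surjective. The plan is to identify the kernel of this restriction, use a dimension count to produce an element of $\mathcal{V}$ with nontrivial first coordinate, and then generate a Zariski-dense family of such elements in the image by conjugating by the $\mathbb{G}_m^r$-part of $\Gamma_{j_1,\ldots,j_r}$.

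From Diagram \eqref{upperrectangle} together with the hypothesis $\mathcal{U}_{j_1,j_2-1,j_3,\ldots,j_r}=U_{1,j_1}\times U_{2,j_2-1}\times U_{3,j_3}\times \cdots \times U_{r,j_r}$, one checks that
\[
    \ker(\operatorname{pr}_1|_{\mathcal{V}}) = \mathcal{V}\cap (\{e\}\times V_{2,j_2}) = \ker\bigl(\mathcal{U}\twoheadrightarrow \mathcal{U}_{j_1,j_2-1,j_3,\ldots,j_r}\bigr)=:\mathcal{V}_2.
\]
Combining this with the assumed product decompositions of $\mathcal{U}_{j_1-1,j_2-1,\ldots}$ and $\mathcal{U}_{j_1,j_2-1,\ldots}$ yields
\[
    \dim \mathcal{V} - \dim \mathcal{V}_2 = \dim U_{1,j_1} - \dim U_{1,j_1-1} = \dim V_{1,j_1}.
\]
In the third case of the main induction we may assume $\dim V_{1,j_1} = 1$ (otherwise Proposition \ref{Proptrivialcase} applies), so $V_{1,j_1}$ is a one-dimensional closed subgroup of $\mathbb{G}_a$ and hence equal to $\mathbb{G}_a$. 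In particular $\dim \mathcal{V} > \dim \mathcal{V}_2$, so there exists $R \in \mathcal{V}(\overline{\mathbb{F}_q(t)})$ with $x_1 := \operatorname{pr}_1(R) \neq 0$.

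For each $\alpha \in \overline{\mathbb{F}_q(t)}^{\times}$, lift $(\alpha,1,\ldots,1) \in \mathbb{G}_m^r = \Gamma_{0,\ldots,0}$ through the faithfully flat surjection $\Gamma_{j_1,\ldots,j_r}\twoheadrightarrow \Gamma_{0,\ldots,0}$ to some $Q_\alpha \in \Gamma_{j_1,\ldots,j_r}(\overline{\mathbb{F}_q(t)})$. The subgroup $\mathcal{V}$ is normal in $\Gamma_{j_1,\ldots,j_r}$ since it equals the kernel of the morphism $\Gamma_{j_1,\ldots,j_r} \twoheadrightarrow \Gamma_{j_1-1,j_2-1,j_3,\ldots,j_r}$ from Proposition \ref{CorTannakiangeneral}, so the commutator $[Q_\alpha,R]:=Q_\alpha R Q_\alpha^{-1} R^{-1}$ lies in $\mathcal{V}$. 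A block-matrix computation using the explicit shape \eqref{shapeVhat} of $\hat{V}$ shows that conjugation of $(v_1,v_2) \in V_{1,j_1}\times V_{2,j_2}$ by $Q_\alpha$ multiplies $v_1$ by $\alpha^{-\operatorname{wt}(\mathbf{s}_{j_1})}$ and fixes $v_2$; crucially this action depends only on the $\mathbb{G}_m^r$-image of $Q_\alpha$, not on its unipotent coordinates. Hence $[Q_\alpha,R]$ corresponds to $\bigl((\alpha^{-\operatorname{wt}(\mathbf{s}_{j_1})}-1)x_1,\, 0\bigr)\in V_{1,j_1}\times V_{2,j_2}$, and as $\alpha$ ranges over $\overline{\mathbb{F}_q(t)}^{\times}$ the image $\operatorname{pr}_1|_{\mathcal{V}}(\mathcal{V})$ contains infinitely many distinct points. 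Being a closed subgroup of $\mathbb{G}_a$ of infinite cardinality, it is positive-dimensional and therefore equal to $\mathbb{G}_a = V_{1,j_1}$, as required.

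The most delicate step is the commutator computation, namely verifying that conjugation by $Q_\alpha$ acts on the coordinate of $V_{i,j_i}$ by the scalar $\alpha^{-\operatorname{wt}(\mathbf{s}_{j_i})}$ independently of the $x$-parameters of the chosen lift. This follows from the lower-triangular shape of $X_{\mathbf{s}}$ in \eqref{shapeVhat}: the last column of $X_{\mathbf{s}}$ and the first row of $X_{\mathbf{s}}^{-1}$ each have only one nonzero entry and involve none of the $x$-parameters, so the conjugation action on $\hat{V}_j$ factors through the quotient $\hat{\Gamma}_j \twoheadrightarrow \mathbb{G}_m$.
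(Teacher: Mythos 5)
Your proof is correct, but it proceeds quite differently from the paper's. The paper establishes Lemma \ref{MappedOnto} by a pure exact-sequence diagram chase through Diagram \eqref{Diagram2217}: it uses the inductive hypothesis at $(j_1',j_2')=(j_1-1,j_2)$ to identify $\mathcal{U}_{j_1-1,j_2,\dots,j_r}$ with $U_{1,j_1-1}\times U_{2,j_2}\times\cdots\times U_{r,j_r}$, invokes the surjection $\mathcal{U}\twoheadrightarrow\mathcal{U}_{j_1-1,j_2,\dots,j_r}$ from Corollary \ref{LemmaTannakianprojuniradical}, and concludes by exactness of the horizontal rows — no dimension count and no commutator appear. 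You instead identify $\ker(\operatorname{pr}_1|_\mathcal{V})=\mathcal{V}_2=\ker(\mathcal{U}\twoheadrightarrow\mathcal{U}_{j_1,j_2-1,\dots,j_r})$ and compute $\dim\mathcal{V}-\dim\mathcal{V}_2=\dim V_{1,j_1}$ from the hypotheses at $(j_1,j_2-1)$ and $(j_1-1,j_2-1)$. That count already finishes the lemma: $\operatorname{pr}_1(\mathcal{V})$ is a closed subgroup of $V_{1,j_1}$ of dimension $\dim\mathcal{V}-\dim\mathcal{V}_2=\dim V_{1,j_1}$, hence equals $V_{1,j_1}$. The commutator argument you append (lift $(\alpha,1,\dots,1)$ to $Q_\alpha$, form $Q_\alpha RQ_\alpha^{-1}R^{-1}$, and use the $\mathbb{G}_m^r$-action scaling the $V_{1,j_1}$-coordinate by $\alpha^{-\operatorname{wt}(\mathbf{s}_{j_1})}$) is a faithful transplant of the technique the paper uses for Lemma \ref{Lemmacase2surj} and Lemma \ref{Lemmaaction}, and it is correct, but given the dimension count it is superfluous: you have already shown the image is one-dimensional, not merely nonzero. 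In terms of trade-offs: the paper's diagram chase avoids any appeal to closedness of images and to the case distinction $\dim V_{1,j_1}\in\{0,1\}$, while your version (stripped of the redundant commutator step) is shorter, self-contained, and makes explicit how the $(j_1,j_2-1)$ and $(j_1-1,j_2-1)$ cases of the hypothesis enter — notably not the $(j_1-1,j_2)$ case that the paper's $i=1$ chase uses, the two being related by swapping the roles of the two factors.
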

\begin{proof}
    It is enough to consider the case $i=1$. We have the following commutative diagram whose horizontal lines are exact:
    
\begin{equation}\label{Diagram2217}
                \begin{tiny}
                \begin{tikzpicture}[auto] 
                   \node (22) at (-0.7, 1.5) {$\mathcal{V}$}; \node (42) at (3, 1.5) {$\mathcal{U}$}; \node (62) at (8, 1.5) {$\mathcal{U}_{j_1-1,j_2-1,\,j_3,\,\dots,\,j_r}$};
                    \node (20) at (-0.7, 0) {$V_{1,\,j_1}\times V_{2,\,j_2}$}; \node (40) at (3, 0) {$U_{1,j_1}\times \cdots \times U_{r,\,j_r}$}; \node (60) at (8, 0) {$U_{1,\,j_1-1}\times U_{2,\,j_2-1} \times U_{3,\,j_3}\times \cdots \times U_{r,\,j_r}$};
                    \node (2-2) at (-0.7, -1.5) {$V_{1,\,j_1}$}; \node (4-2) at (3, -1.5) {$U_{1,\,j_1-1}\times U_{2,\,j_2}\times \cdots \times U_{r,\,j_r}$}; \node (6-2) at (8, -1.5) {$U_{1,\,j_1-1}\times U_{2,\,j_2-1} \times U_{3,\,j_3}\times \cdots \times U_{r,\,j_r}$};
                    \node (2-4) at (-0.7, -3) {$V_{1,\,j_1}$}; \node (4-4) at (3, -3) {$\mathcal{U}_{j_1-1,j_2,\,\dots,\,j_r}$}; \node (6-4) at (8, -3) {$\mathcal{U}_{j_1-1,j_2-1,\,j_3,\,\dots,\,j_r}$};
    
                    \draw[=] (62) to node[sloped] {$= $}(60);
                    \draw[=] (60) to node[sloped] {$= $}(6-2);
                    
                    \draw[->>] (40) to node {}(4-2);
                    \draw[{Hooks[right]}->] (42) to node {$\iota$}(40);
                    \draw[{Hooks[right]}->] (22) to node {$\iota \mid_{\mathcal{V}}$}(20);
                    
                    \draw[{Hooks[right]}->] (22) to node {}(42);
                    \draw[{Hooks[right]}->] (20) to node {}(40);
                    \draw[{Hooks[right]}->] (2-2) to node {}(4-2);
    
                    \draw[->>] (42) to node {$\pi$}(62);
                    \draw[->>] (4-2) to node {}(6-2);
                    \draw[->>] (40) to node {$\Pi$}(60);

                    \draw[->>] (20) to node {$\operatorname{pr}_1$}(2-2);
                    \draw[=] (2-2) to node[sloped] {$= $}(2-4);
                    \draw[=] (4-2) to node[sloped] {$= $}(4-4);
                    \draw[=] (6-2) to node[sloped] {$= $}(6-4);
                    \draw[->>] (4-4) to node {}(6-4);
                    \draw[{Hooks[right]}->] (2-4) to node {}(4-4);
    
                \end{tikzpicture}.
            \end{tiny}
            \end{equation}
            Lemma \ref{LemmaTannakianProj} shows that the composed homomorphism from $\mathcal{U}_{j_1,\,\dots,\,j_r}$, which is simply written as $\mathcal{U}$ in the Diagram above \eqref{Diagram2217}, to $\mathcal{U}_{j_1-1,\,j_2,\,\dots,\,j_r}$ in the middle column of the diagram coincides with the surjection given in Corollary \ref{LemmaTannakianprojuniradical} (2). Hence $\mathcal{U}$ is mapped onto $\mathcal{U}_{j_1-1,\,j_2,\dots,\,j_r}$. The commutativity of the diagram proves that the composition $(\operatorname{pr}_1 \circ (\iota|_{\mathcal{V}})): \mathcal{V} \rightarrow V_{1,\,j_1}$ is also surjective as the horizontal lines are exact. So $\mathcal{V}$ is mapped onto $V_{1,\,j_1}$ by the projection $\operatorname{pr}_1:V_{1,\,j_1}\times V_{2,\,j_2} \twoheadrightarrow V_{1,\,j_1}$.
\end{proof}

In order to verify Equation \eqref{mainthmeq}, it is suffices to consider the case where we have $\dim \Gamma_{M(i,\,j_i)}> \dim\Gamma_{M(i,\,j_i-1)} +1$ for $i=1,\,2$ because of Proposition \ref{Proptrivialcase}. These inequalities imply
 \begin{equation}
    \dim \Gamma_{M(i,\,j_i)}= \dim\Gamma_{M(i,\,j_i-1)} +1 \quad (i=1,\,2),\label{assumptiondimgammacase3}
\end{equation}
see Formula \eqref{successor}, and that the algebraic groups $V_{1,\,j_1}$ and $V_{2,\,j_2}$ are isomorphic to $\mathbb{G}_a$.
Indeed, we have
\begin{equation}
    \dim V_{i,\,j_i}= \dim U_{i,\,j_i}-\dim U_{i,\,j_i-1}= \dim \Gamma_{M(i,\,j_i)}-\dim \Gamma_{M(i,\,j_i-1)}=1
\end{equation}
and hence $V_{i,\,j_i}\simeq \mathbb{G}_a$ by Corollary \ref{VinGa} for $i=1,\,2$.
\begin{Lemma}\label{Lemmaaction}
    Assume that Equality \eqref{assumptiondimgammacase3} holds and consider the $\mathbb{G}_m^2(\overline{\mathbb{F}_q(t)})$-action on $(V_{1,\,j_1}\times V_{2,\,j_2})(\overline{\mathbb{F}_q(t)})=\mathbb{G}_a^2(\overline{\mathbb{F}_q(t)})$ given by
    \begin{equation}
        (a_1,\,a_2).(x_1,\,x_2):=(a_1^{\operatorname{wt}(\mathbf{s}_{j_1})}x_1,\,a_2^{\operatorname{wt}(\mathbf{s}_{j_2})}x_2)
    \end{equation}
    for given $(a_1,\,a_2) \in \mathbb{G}_m^2(\overline{\mathbb{F}_q(t)})=(\overline{\mathbb{F}_q(t)}^\times)^2$ and $(x_1,\,x_2) \in (V_{1,\,j_1}\times V_{2,\,j_2})(\overline{\mathbb{F}_q(t)})=\overline{\mathbb{F}_q(t)}^2$.
    Then the subgroup $\mathcal{V}(\overline{\mathbb{F}_q(t)})$ is closed under this action.
\end{Lemma}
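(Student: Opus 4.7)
My plan is to realize the specified $\mathbb{G}_m^2$-action as conjugation inside $\Gamma:=\Gamma_{j_1,\ldots,j_r}$ and exploit normality of $\mathcal{V}$ there. The first step is to promote the definition of $\mathcal{V}$ as a subgroup of $\mathcal{U}$ to a description as a normal subgroup of $\Gamma$: the map $\mathcal{U}\twoheadrightarrow \mathcal{U}_{j_1-1,j_2-1,j_3,\ldots,j_r}$ whose kernel defines $\mathcal{V}$ is the restriction of the $\Gamma$-surjection from Proposition \ref{CorTannakiangeneral}, and any element of the kernel of the larger surjection necessarily maps to $1\in\mathbb{G}_m^r$ and hence lies in $\mathcal{U}$. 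One concludes that $\mathcal{V}=\ker(\Gamma\twoheadrightarrow \Gamma_{j_1-1,j_2-1,j_3,\ldots,j_r})$, which is automatically normal in $\Gamma$.

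Next, for any given $(a_1,a_2)\in\mathbb{G}_m^2(\overline{\mathbb{F}_q(t)})$, I will use surjectivity of $\pi:\Gamma\twoheadrightarrow \Gamma_{0,\ldots,0}=\mathbb{G}_m^r$ to pick a lift $Q\in \Gamma(\overline{\mathbb{F}_q(t)})$ of $(a_1,a_2,1,\ldots,1)$, and decompose it as $Q=Q_1\oplus\cdots\oplus Q_r$ with $Q_i\in\Gamma_{M(i,j_i)}\subset\hat{\Gamma}_{j_i}$. Writing $\mathbf{s}_{j_i}=(s_1,\ldots,s_d)$, the last block $Y_i$ of $Q_i$ (the one of shape $X_{\mathbf{s}_{j_i}}$) is lower triangular with diagonal $(a_i^{s_1+\cdots+s_d},\ldots,a_i^{s_d},1)$ for $i=1,2$, independently of the other free parameters in the chosen lift. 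Given $R\in\mathcal{V}(\overline{\mathbb{F}_q(t)})$ corresponding to $(x_1,x_2)\in V_{1,j_1}\times V_{2,j_2}$, the explicit form \eqref{shapeVhat} of $\hat{V}_{j_i}$ together with the definition of $\mathcal{V}$ shows that $R=R_1\oplus\cdots\oplus R_r$ with $R_i=I$ for $i\geq 3$ and, for $i=1,2$, $R_i$ equal to the identity outside its last block, where it equals $I+x_i E_{d+1,1}$.

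The remaining work is a direct matrix computation of $Q^{-1}RQ$ componentwise. For $i\geq 3$ one recovers $I$. For $i=1,2$ the conjugation reduces in the last block to $Y_i^{-1}(I+x_i E_{d+1,1})Y_i=I+x_i Y_i^{-1}E_{d+1,1}Y_i$, and a one-line calculation exploiting only that $Y_i$ is lower triangular with $(1,1)$-entry $a_i^{\operatorname{wt}(\mathbf{s}_{j_i})}$ and $(d+1,d+1)$-entry $1$ gives $Y_i^{-1}E_{d+1,1}Y_i=a_i^{\operatorname{wt}(\mathbf{s}_{j_i})}E_{d+1,1}$. Hence $Q^{-1}RQ$ corresponds to $(a_1^{\operatorname{wt}(\mathbf{s}_{j_1})}x_1,\,a_2^{\operatorname{wt}(\mathbf{s}_{j_2})}x_2)=(a_1,a_2).(x_1,x_2)$, and by normality of $\mathcal{V}$ in $\Gamma$ it lies in $\mathcal{V}$. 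Since $(a_1,a_2)$ and $R$ were arbitrary, this proves stability. I do not anticipate any serious obstacle; the most delicate point is the normality observation of the first step, after which everything reduces to the block structure recorded in Subsection \ref{eachlevel}.
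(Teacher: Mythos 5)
Your proposal is correct and follows essentially the same strategy as the paper: realize the $\mathbb{G}_m^2$-action as conjugation inside $\Gamma_{j_1,\ldots,j_r}$ and use stability of $\mathcal{V}$ under that conjugation. Where the paper argues abstractly (equivariance of $\mathcal{U}\twoheadrightarrow\mathcal{U}_{j_1-1,j_2-1,j_3,\ldots,j_r}$ plus centrality of $\mathcal{V}$ in $\mathcal{U}$ to descend the action to $\mathbb{G}_m^r$, citing Equation \eqref{Examplecommutator} for the formula), you observe directly that $\mathcal{V}$ is the kernel of $\Gamma_{j_1,\ldots,j_r}\twoheadrightarrow\Gamma_{j_1-1,j_2-1,j_3,\ldots,j_r}$, hence normal, and then carry out the block computation explicitly — which automatically shows the result depends only on $(a_1,a_2)$ and matches the stated action. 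The two arguments are interchangeable in substance; yours just makes the scalar $a_i^{\operatorname{wt}(\mathbf{s}_{j_i})}$ appear by the one-line identity $Y_i^{-1}E_{d+1,1}Y_i = (Y_i)_{11}E_{d+1,1}$ instead of deferring to the earlier example.
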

\begin{proof}
    Let us consider the diagram
    \begin{small}\begin{equation}\label{diagramcase3action}
                \begin{tikzpicture}[auto]

    \node (22) at (-1, 2) {$\mathcal{U}$}; \node (42) at (4, 2) {$\Gamma_{ j_1,\,j_2,\,\dots,\,j_r}$}; \node (62) at (9, 2) {$\Gamma_{ 0,\,0,\,\dots,\,0}$};
     \node (20) at (-1, 0) {$\mathcal{U}_{j_1-1,\,j_2-1,\,j_3,\,\dots,\,j_r}$}; \node (40) at (4, 0) {$\Gamma_{ j_1-1,\,j_2-1,\,j_3,\,\dots,\,j_r}$}; \node (60) at (9, 0) {$\Gamma_{ 0,\,0,\,\dots,\,0}$};
    \draw[=] (62) to node [sloped]{$=$}(60);
    
    \draw[->>] (42) to node {}(40);
    \draw[->] (22) to node {}(20);

    \draw[{Hooks[right]}->] (22) to node {}(42);
    \draw[{Hooks[right]}->] (20) to node {}(40);

    \draw[->>] (42) to node {}(62);
    \draw[->>] (40) to node {}(60);

    \end{tikzpicture},
            \end{equation}
            \end{small}
by which we can consider the conjugate action of $\Gamma_{ j_1,\,j_2,\,\dots,\,j_r}(\overline{\mathbb{F}_q(t)})$ on $\mathcal{U}(\overline{\mathbb{F}_q(t)})$. 
Written explicitly, the action is given by $R.Q:=R^{-1}QR$ for arbitrary matrices $R \in \Gamma_{ j_1,\,j_2,\,\dots,\,j_r}(\overline{\mathbb{F}_q(t)})$ and $Q \in \mathcal{U}(\overline{\mathbb{F}_q(t)})$.
We can also consider the $\Gamma_{ j_1,\,j_2,\,\dots,\,j_r}(\overline{\mathbb{F}_q(t)})$-action on the group $\mathcal{U}_{j_1-1,\,j_2-1,\,j_3,\,\dots,\,j_r}(\overline{\mathbb{F}_q(t)})$ by the conjugate action of the group $\Gamma_{ j_1,\,j_2-1,\,j_3,\,\dots,\,j_r}(\overline{\mathbb{F}_q(t)})$ and the surjection $ \Gamma_{ j_1,\,j_2,\,\dots,\,j_r} \twoheadrightarrow \Gamma_{ j_1-1,\,j_2-1,\,j_3,\,\dots,\,j_r}$, which is given by  Proposition \ref{CorTannakiangeneral}. As the homomorphism $\mathcal{U}\twoheadrightarrow \mathcal{U}_{j_1-1,\,j_2-1,\,j_3,\,\dots,\,j_r}$ in the above diagram is $\Gamma_{ j_1,\,j_2,\,\dots,\,j_r}(\overline{\mathbb{F}_q(t)})$-equivariant, its kernel $\mathcal{V}(\overline{\mathbb{F}_q(t)})$ is closed under the action. We note that all elements of $V_j(\overline{\mathbb{F}_q(t)})$ commute with any element of $U_j(\overline{\mathbb{F}_q(t)})$ (see Subsection \ref{eachlevel} for the definitions), thus $\mathcal{V}(\overline{\mathbb{F}_q(t)})$ is contained in the center of $\mathcal{U}(\overline{\mathbb{F}_q(t)})$ and hence is equipped with the $\Gamma_{ 0,\,0,\,\dots,\,0}(\overline{\mathbb{F}_q(t)})$-action induced by the action of $\Gamma_{ j_1,\,j_2,\,\dots,\,j_r}(\overline{\mathbb{F}_q(t)})$. 
We can check, by the manner similar to Equation \eqref{Examplecommutator}, that this $\Gamma_{ 0,\,0,\,\dots,\,0}(\overline{\mathbb{F}_q(t)})=\mathbb{G}_m^r(\overline{\mathbb{F}_q(t)})$-action is given by $(a_1,\,a_2,\,\dots,\,a_r).(x_1,\,x_2):=(a_1^{\operatorname{wt}(\mathbf{s}_{j_1})},\,a_2^{\operatorname{wt}(\mathbf{s}_{j_2})})$ for each $(a_1,\,a_2,\,\dots,\,a_r) \in \Gamma_{ 0,\,0,\,\dots,\,0}(\overline{\mathbb{F}_q(t)})$ and $(x_1,\,x_2)\in \mathcal{V}(\overline{\mathbb{F}_q(t)})$.
\end{proof}

By Lemmas \ref{MappedOnto} and \ref{Lemmaaction}, we can use the following lemma which shows that $\mathcal{V}$ equals $V_{1,\,j_1}\times V_{2,\,j_2}$:

\begin{Lemma}\label{LemmaWisWhole}
    Let $V_1$ and $V_2$ be algebraic groups over $\overline{\mathbb{F}_q(t)}$ isomorphic to $\mathbb{G}_a$ and consider the action of $\mathbb{G}_m^2(\overline{\mathbb{F}_q(t)})$ on the direct product $(V_1\times V_2)(\overline{\mathbb{F}_q(t)})$ given by
    \begin{equation}
        (\alpha,\,\beta).(x,\,y)=(\alpha^{w_1}x,\,\beta^{ w_2}y)
    \end{equation}
    with some $w_1,\,w_2 \geq 1$ for each $(\alpha,\,\beta) \in \mathbb{G}_m^2(\overline{\mathbb{F}_q(t)})$ and $(x,\,y)\in (V_1\times V_2)(\overline{\mathbb{F}_q(t)})$.
    Take an algebraic subgroup $W$ of $V_1 \times V_2$ which is mapped onto $V_1$ and onto $V_2$ via each projection.
    If the subgroup $W(\overline{\mathbb{F}_q(t)})$ is closed under the $\mathbb{G}_m^2(\overline{\mathbb{F}_q(t)})$-action on $(V_1 \times V_2)(\overline{\mathbb{F}_q(t)})$, then the algebraic subgroup $W$ must be equal to $V_1 \times V_2$.
\end{Lemma}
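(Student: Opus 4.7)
The plan is to project $W$ onto $V_1$ and analyze the kernel. Let $\pi_1\colon W \twoheadrightarrow V_1$ be the first projection, surjective by hypothesis, and set $K := \ker \pi_1 = W \cap (\{0\}\times V_2)$, viewed as a closed subgroup of $V_2$. Since the hypothesis on $W(\overline{\mathbb{F}_q(t)})$ is preserved by the subtorus $\{1\}\times \mathbb{G}_m \subset \mathbb{G}_m^2$, which acts trivially on the first coordinate and by $y \mapsto \beta^{w_2}y$ on the second, the subgroup $K(\overline{\mathbb{F}_q(t)}) \subset V_2(\overline{\mathbb{F}_q(t)}) \cong \overline{\mathbb{F}_q(t)}$ must be stable under this scaling for every $\beta \in \overline{\mathbb{F}_q(t)}^\times$.

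Next I would classify such stable subgroups. Because $\overline{\mathbb{F}_q(t)}$ is algebraically closed and $w_2 \geq 1$, the map $\beta \mapsto \beta^{w_2}$ is surjective on $\overline{\mathbb{F}_q(t)}^\times$, so any nonzero element of $K(\overline{\mathbb{F}_q(t)})$ has a scalar orbit that sweeps out all of $\overline{\mathbb{F}_q(t)}$. Hence $K(\overline{\mathbb{F}_q(t)})$ is either $\{0\}$ or all of $V_2(\overline{\mathbb{F}_q(t)})$, whence $K = 0$ or $K = V_2$ as reduced closed subgroups.

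In the case $K = 0$, the projection $\pi_1$ is an isomorphism, so $W$ is the graph of a group homomorphism $f\colon V_1 \to V_2$. The $\mathbb{G}_m^2$-stability of the graph translates into $f(\alpha^{w_1}x) = \beta^{w_2}f(x)$ for all $\alpha,\beta \in \overline{\mathbb{F}_q(t)}^\times$ and $x \in V_1(\overline{\mathbb{F}_q(t)})$; fixing $\alpha$ and $x$ while letting $\beta$ vary forces $f(x) = 0$, contradicting the surjectivity of $W \twoheadrightarrow V_2$. In the case $K = V_2$, the inclusion $\{0\}\times V_2 \subset W$ together with surjectivity of $\pi_1$ immediately yields $W(\overline{\mathbb{F}_q(t)}) = (V_1\times V_2)(\overline{\mathbb{F}_q(t)})$, whence $W = V_1\times V_2$.

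The only subtle point is converting the point-wise hypothesis into a scheme-theoretic conclusion; this works because we are over the algebraically closed field $\overline{\mathbb{F}_q(t)}$ and the ambient group $V_1 \times V_2 \cong \mathbb{G}_a^2$ is reduced, so coincidence of $\overline{\mathbb{F}_q(t)}$-points suffices to identify closed subgroups. I expect the classification of $\mathbb{G}_m$-stable subgroups of $\mathbb{G}_a$ to be the conceptual crux, but over an algebraically closed field of any characteristic this reduces to the trivial surjectivity of the power map $\beta \mapsto \beta^{w_2}$ on units.
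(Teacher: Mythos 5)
Your proof is correct and takes a genuinely different route from the paper's. The paper invokes a structural result (Conrad's Corollary~1.8) to write $W$ as the zero locus of a single additive ($p$-)polynomial $P(X)+Q(Y)$ in $\mathbb{G}_a^2$, then uses the presence of $(1,y)\in W$ together with the $(\alpha,1)$-scaling action to kill all coefficients of $P$ (and symmetrically of $Q$). You instead analyze $K := W \cap (\{0\}\times V_2)$, observe that $K$ is stable under the $\{1\}\times\mathbb{G}_m$-scaling (since that subtorus fixes $\{0\}\times V_2$ and preserves $W$), and deduce the dichotomy $K=0$ or $K=V_2$ from the fact that the scaling orbit of any nonzero element fills out $\overline{\mathbb{F}_q(t)}^\times$; the two cases then finish by the graph argument and the containment argument, respectively. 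Your approach avoids the $p$-polynomial classification entirely and is arguably more elementary, though both proofs ultimately exploit the same rigidity: the only scaling-invariant subgroups of $\mathbb{G}_a$ over an algebraically closed field are $\{0\}$ and $\mathbb{G}_a$ itself.

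One minor imprecision: in the case $K=0$ you assert that ``$\pi_1$ is an isomorphism.'' Scheme-theoretically this need not hold in characteristic $p$ — even if $K(\overline{\mathbb{F}_q(t)})=\{0\}$, the map $\pi_1$ could a priori be a purely inseparable isogeny with infinitesimal kernel. But the argument only needs that $W(\overline{\mathbb{F}_q(t)})$ is the graph of a set-theoretic group homomorphism $f\colon V_1(\overline{\mathbb{F}_q(t)})\to V_2(\overline{\mathbb{F}_q(t)})$, which already follows from $K(\overline{\mathbb{F}_q(t)})=\{0\}$ and surjectivity of $\pi_1$ on points. The contradiction $f\equiv 0$ then proceeds exactly as you write, and the final passage from equality of $\overline{\mathbb{F}_q(t)}$-points to equality of closed subgroups is valid because $V_1\times V_2$ is reduced and of finite type over an algebraically closed field.
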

\begin{proof}
    As we have a surjection $W\twoheadrightarrow V_1 \simeq  \mathbb{G}_a$ by the assumption, the codimension of $W$ in $V_1 \times V_2$ is $0$ or $1$. Hence the algebraic set $W(\overline{\mathbb{F}_q(t)})$ is defined by a polynomial 
    \begin{equation}
        (b_m X^{p^m}+b_{m-1}X^{p^{m-1}}+\cdots +b_0X^{p^0})+(c_{n}Y^{ p^{n}}+c_{n-1}Y^{ p^{n-1}}+\cdots +c_0 Y^{ p^0})
    \end{equation}
    in variables $X$ and $Y$ with some $b_0,\,\cdots,\,b_m,\,c_0,\,\cdots,\,c_n \in \overline{\mathbb{F}_q(t)}$ , see \cite[Corollary 1.8]{Conrad2015}. 
    There exists $y\in \overline{\mathbb{F}_q(t)}$ such that $(1,\,y) \in W(\overline{\mathbb{F}_q(t)})$
    as we have assumed that $W$ is mapped onto $V_1$ via the first projection. 
    Take any $\alpha \in \overline{\mathbb{F}_q(t)}^\times$ and consider the action of $(\alpha,\,1)\in \mathbb{G}_m^2(\overline{\mathbb{F}_q(t)})$. As $W(\overline{\mathbb{F}_q(t)})$ is closed under the action, we have $(\alpha,\,1).(1,\,y) \in W(\overline{\mathbb{F}_q(t)})$ and hence it follows that
    \begin{align}
        0&=\left(b_m (\alpha^{w_1})^{p^m}+b_{m-1}(\alpha^{w_1})^{p^{m-1}}+\cdots +b_0(\alpha^{w_1})^{p^0}\right)\\
        &\quad \ \ +\left(c_{n}y^{p^{n}}+c_{n-1}y^{ p^{n-1}}+\cdots +c_0 y^{p^0}\right).
    \end{align}
    As $\alpha$ is an arbitrarily chosen non-zero element in the infinite field $\overline{\mathbb{F}_q(t)}$, it must follows that $b_0=b_1=\cdots=b_m=0$. By a similar argument, we can also obtain $c_0=c_1=\cdots=c_{n}=0$, hence the desired equality $W=V_1\times V_2$.
\end{proof}

    The conclusion of this subsubsection is the following:

\begin{Proposition}\label{GaloisGroupDirectProd3}
    Take $1 \leq j_1,\,j_2 \leq \#I$ and $0 \leq j_3,\,\dots,\,j_r \leq \#I$ and assume that the inequality
    \begin{equation}
        \dim \Gamma_{M(i,\,j_i)}> \dim\Gamma_{M(i,\,j_i-1)}  
    \end{equation}
    holds for $i=1,\,2$. If we have
    \begin{align} 
         \Gamma_{ j_1^\prime,\,j_2^\prime,\,j_3,\,\dots,\,j_r}
        =\Gamma_{M(1,\,j_1^\prime)}\times \Gamma_{M(2,\,j_2^\prime)} \times  \Gamma_{M(3,\,j_3)} \times \cdots \times \Gamma_{M(r,\,j_r)}
    \end{align}    
    for $(j_1^\prime,\,j_2^\prime)=(j_1-1,\,j_2),\,(j_1,\,j_2-1)$, and $(j_1-1,\,j_2-1)$, then we also have 
    \begin{equation}
    \Gamma_{ j_1,\,j_2,\,\dots,\,j_r}=\Gamma_{M(1,\,j_1)}\times \Gamma_{M(2,\,j_2)} \times \cdots \times \Gamma_{M(r,\,j_r)}. 
\end{equation}
\end{Proposition}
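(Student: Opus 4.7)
The plan is to combine Lemmas \ref{MappedOnto}, \ref{Lemmaaction}, and \ref{LemmaWisWhole} to identify the kernel between $\mathcal{U}_{j_1,\dots,j_r}$ and $\mathcal{U}_{j_1-1,j_2-1,j_3,\dots,j_r}$ as the full direct product $V_{1,j_1} \times V_{2,j_2}$, and then to close with a dimension count using the short exact sequences of $t$-motivic Galois groups.

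First I would set $\mathcal{U} := \mathcal{U}_{j_1,j_2,\dots,j_r}$ and let $\mathcal{V}$ denote the kernel of the faithfully flat map $\mathcal{U} \twoheadrightarrow \mathcal{U}_{j_1-1,j_2-1,j_3,\dots,j_r}$ supplied by Corollary \ref{LemmaTannakianprojuniradical}~(2), so that the embedding $\iota$ in Diagram \eqref{upperrectangle} realizes $\mathcal{V}$ as a closed subgroup of $V_{1,j_1} \times V_{2,j_2}$. Under the standing hypothesis $\dim \Gamma_{M(i,j_i)} > \dim \Gamma_{M(i,j_i-1)}$ for $i=1,2$, Formula \eqref{successor} forces the dimension jump to be exactly one, and Corollary \ref{VinGa} then gives $V_{i,j_i} \simeq \mathbb{G}_a$.

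Next, the three inductive inputs $(j_1',j_2')=(j_1-1,j_2),\,(j_1,j_2-1),\,(j_1-1,j_2-1)$ provided in the statement are exactly what Lemma \ref{MappedOnto} needs; it follows that $\mathcal{V}$ surjects onto each of $V_{1,j_1}$ and $V_{2,j_2}$ via the two projections. Lemma \ref{Lemmaaction} (which only requires the dimension conditions already checked) shows that $\mathcal{V}(\overline{\mathbb{F}_q(t)})$ is stable under the $\mathbb{G}_m^2(\overline{\mathbb{F}_q(t)})$-action by $(\alpha,\beta).(x,y) = (\alpha^{\operatorname{wt}(\mathbf{s}_{j_1})}x, \beta^{\operatorname{wt}(\mathbf{s}_{j_2})}y)$. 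These two conclusions are precisely the hypotheses of Lemma \ref{LemmaWisWhole} applied to $W = \mathcal{V}$, and the outcome is $\mathcal{V} = V_{1,j_1} \times V_{2,j_2}$.

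Finally I would run the dimension count. The corner inductive hypothesis together with Corollary \ref{LemmaTannakianprojuniradical} yields the identification $\mathcal{U}_{j_1-1,j_2-1,j_3,\dots,j_r} = U_{1,j_1-1} \times U_{2,j_2-1} \times U_{3,j_3} \times \cdots \times U_{r,j_r}$; combining with $\mathcal{V} = V_{1,j_1} \times V_{2,j_2}$ via the exact sequence $1 \to \mathcal{V} \to \mathcal{U} \to \mathcal{U}_{j_1-1,j_2-1,j_3,\dots,j_r} \to 1$ gives $\dim \mathcal{U} = \dim(U_{1,j_1} \times \cdots \times U_{r,j_r})$, and then the exact sequence $1 \to \mathcal{U} \to \Gamma_{j_1,\dots,j_r} \to \Gamma_{0,\dots,0} \to 1$ combined with Proposition \ref{GaloisGroupDirectProd0} yields $\dim \Gamma_{j_1,\dots,j_r} = \dim(\Gamma_{M(1,j_1)} \times \cdots \times \Gamma_{M(r,j_r)})$. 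Because the ambient direct product is smooth and connected by Theorem \ref{ThmPapanikolas}, the closed immersion of Lemma \ref{Lemmainclusion} must then be an isomorphism. All the hard analytic and Tannakian work is absorbed into the three cited lemmas; the main obstacle, already overcome in the statement of the proposition, was recognizing that all three corner inductive inputs are genuinely required---the first two to feed Lemma \ref{MappedOnto}, and the third to collapse the exact sequence for $\mathcal{U}$.
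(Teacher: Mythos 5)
Your proposal is correct and follows the paper's own argument essentially verbatim: identify $\mathcal{V}$ with $V_{1,j_1}\times V_{2,j_2}$ by combining Lemmas \ref{MappedOnto}, \ref{Lemmaaction}, and \ref{LemmaWisWhole}, then run the two exact sequences for $\mathcal{U}$ and for $\Gamma_{j_1,\dots,j_r}$ to equate dimensions and finish with smoothness and connectedness. The only minor inaccuracy is editorial: Lemma \ref{MappedOnto} as stated needs all three corner inputs (not just the first two), with the $(j_1-1,j_2-1)$ case reused again in the dimension count, but this does not affect the validity of the argument.
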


\begin{proof}
    Lemmas \ref{MappedOnto} and \ref{Lemmaaction} enable us to apply Lemma \ref{LemmaWisWhole}, hence the algebraic subgroup $\mathcal{V}$ of $V_{1,\,j_1}\times V_{2,\,j_2}$ is equal to the whole algebraic group $V_{1,\,j_1}\times V_{2,\,j_2}$. 
Therefore we can also obtain that $\dim \mathcal{U}= \dim (U_{1,\,j_1}\times \cdots \times U_{r,\,j_r})$ by Diagram \eqref{upperrectangle} of exact sequences and obtain $\dim \Gamma_{ j_1,\,j_2,\,\dots,\,j_r}=\dim (\Gamma_{M(1,\,j_1)}\times  \cdots \times \Gamma_{M(r,\,j_r)})$ by the commutative diagram 
            \begin{equation}    \begin{tikzpicture}[auto] 
                   \node (22) at (-1, 2) {$\mathcal{U}$}; \node (42) at (3.5, 2) {$\Gamma_{ j_1,\,j_2,\,\dots,\,j_r}$}; \node (62) at (8, 2) {$\Gamma_{ 0,\,0,\,\dots,\,0}$};
                    \node (20) at (-1, 0) {$U_{1,\,j_1}\times \cdots \times U_{r,\,j_r}$}; \node (40) at (3.5, 0) {$\Gamma_{ M(1,\,j_1)}\times\cdots \times \Gamma_{M(r,\,j_r)}$}; \node (60) at (8, 0) {$\Gamma_{ M(1,\,0)}\times\cdots \times \Gamma_{M(r,\,0)}$};
    
                    \draw[-] (62) to node[sloped] {$=$}(60);
                    \draw[{Hooks[right]}->] (42) to node {}(40);
                    \draw[{Hooks[right]}->] (22) to node {}(20);
                    
                    \draw[{Hooks[right]}->] (22) to node {}(42);
                    \draw[{Hooks[right]}->] (20) to node {}(40);
    
                    \draw[->>] (42) to node {}(62);
                    \draw[->>] (40) to node {}(60);
    
    
                \end{tikzpicture}
            \end{equation}
whose horizontal lines are exact.
As the direct product $\Gamma_{M(1,\,j_1)} \times \cdots \times \Gamma_{M(r,\,j_r)}$ is smooth and connected, we can show that the closed immersion $$\Gamma_{j_1,\,j_2,\,\dots,\,j_r} \hookrightarrow\Gamma_{M(1,\,j_1)}\times  \cdots \times \Gamma_{M(r,\,j_r)}$$
given in Lemma \ref{Lemmainclusion} is an isomorphism.
\end{proof}

\subsubsection{Proof of Theorem \ref{GaloisGroupDirectProd} and consequences on transcendence of MZV's}

We accomplish the proof of Theorem \ref{GaloisGroupDirectProd}. We also consider the transcendence result (Theorem \ref{main}), which comes from the theorem.

\begin{proof}[Proof of Theorems \ref{GaloisGroupDirectProd}]
    Propositions \ref{GaloisGroupDirectProd0}, \ref{Proptrivialcase}, \ref{GaloisGroupDirectProd2}, and \ref{GaloisGroupDirectProd3} enable us to obtain
    \begin{equation}
    \Gamma_{j_1,\,j_2,\,\dots,\,j_r}=\Gamma_{M(1,\,j_1)}\times \Gamma_{M(2,\,j_2)} \times \cdots \times \Gamma_{M(r,\,j_r)} 
\end{equation}
by induction on the sum $j_1+\cdots+j_r$.
\end{proof}

Theorems \ref{ThmPapanikolas} and \ref{GaloisGroupDirectProd} induce Theorem \ref{main} as follows:

\begin{proof}[Proof of Theorem \ref{main}]
By Theorems \ref{GaloisGroupDirectProd} and \ref{ThmPapanikolas}, we have
\begin{align}
    &\quad \ \ \trdeg_{\overline{K}}\overline{K}(\mathcal{L}_{l_i,\,\mathbf{s}_j}|_{t=\theta},\,\Omega_{l_i}|_{t=\theta} \ \mid 1\leq i \leq r,\,1 \leq j \leq \#I )\\
    &=\dim\Gamma_{M(1,\,\#I)\oplus \cdots \oplus M(r,\,\#I)}
    =\dim \Gamma_{M(1,\,\#I)}\times \Gamma_{M(2,\,\#I)} \times \cdots \times \Gamma_{M(r,\,\#I)}\\
    &=\dim \Gamma_{M(1,\,\#I)}+\dim \Gamma_{M(2,\,\#I)}+\cdots+ \dim \Gamma_{M(r,\,\#I)}\\
    &=\sum_{i=1}^{r}\trdeg_{\overline{K}}\overline{K}(\mathcal{L}_{l_i,\,\mathbf{s}_j}|_{t=\theta},\,\Omega_{l_i}|_{t=\theta} \ \mid 1 \leq j \leq \#I )
\end{align}
for given $u_{i,\,s_j}$'s which satisfy the inequality 
\begin{equation}
    ||u_{i,\,s}||_\infty<|\theta|_\infty^{\frac{s p^{l_i}}{p^{l_i}-1}}.
\end{equation}
Putting $u_{i,\,s}=H_{l_i,\,s-1}$ for each $1\leq i\leq r$ and $s\geq1$, we obtain the desired result.

\end{proof}

For any finite set $I$ of indices, there exists a finite set $\hat{I}$ which contains $I$ and satisfies the assumption in Theorem \ref{main}. Therefore, we can conclude that no algebraic relations exist over $\overline{K}$ which relate MZV's with different constants fields.
Theorem \ref{GaloisGroupDirectProd} gives us the direct product decomposition in Equation \eqref{mainthmeq} even if we do not know the precise structure of each factor $\Gamma_{M(i,\,j_i)}$.
In other words, we can obtain Equation \eqref{EqmainA} without knowing the transcendence degrees of the field 
$\overline{K}(\tilde{\pi}_{l_i},\,\zeta_{A_{l_i}}(\mathbf{s}) \mid \mathbf{s}\in I )$ over $\overline{K}$ for $1 \leq i \leq r$. 
It seems difficult to determine these transcendence degrees for a given finite set $I$ of indices.


\end{document}